\numberwithin{equation}{section}
\newcommand\@@vertwv[3]{\mathop{\ooalign{%
  $#1#3$\cr
  \hfil$#1\shortmid$\hfil\cr
  \hfil\raisebox{#2}{$#1\shortmid$}\hfil\cr}}}
\newcommand\@vertwv[1]{\mathchoice
  {\@@vertwv{\displaystyle     }{0.38ex}{#1}}%
  {\@@vertwv{\textstyle        }{0.38ex}{#1}}%
  {\@@vertwv{\scriptstyle      }{0.28ex}{#1}}%
  {\@@vertwv{\scriptscriptstyle}{0.21ex}{#1}}}
\newcommand\vertwedge{\@vertwv\wedge}
\newcommand\vertvee  {\@vertwv\vee  }
\newtheorem{theorem}{Theorem}[section]
\newtheorem{lemma}[theorem]{Lemma}
\newtheorem{proposition}[theorem]{Proposition}
\newtheorem{corollary}[theorem]{Corollary}
\newtheorem{question}[theorem]{Question}
\newtheorem{cit}[theorem]{Citation}
\newtheorem{observation}[theorem]{Observation}
\newtheorem*{main:aaut_clone}{Theorem~\ref{thrm:aaut_clone}}
\newtheorem*{main:nek_clone}{Corollary~\ref{cor:nek_clone}}
\newtheorem*{main:Finfty_inherited}{Corollary~\ref{cor:Finfty_inherited}}
\newtheorem*{main:good_H}{Theorem~\ref{thrm:good_H}}
\theoremstyle{definition}
\newtheorem{definition}[theorem]{Definition}
\newtheorem{remark}[theorem]{Remark}
\newtheorem{example}[theorem]{Example}
\newcommand{\Z}{\mathbb{Z}}
\newcommand{\N}{\mathbb{N}}
\newcommand{\R}{\mathbb{R}}
\newcommand{\lk}{\operatorname{lk}}
\newcommand{\dlk}{\operatorname{lk}^\downarrow}
\newcommand{\tree}{\mathcal{T}}
\DeclareMathOperator{\Stab}{Stab}
\DeclareMathOperator{\id}{id}
\DeclareMathOperator{\Aut}{Aut}
\DeclareMathOperator{\AAut}{AAut}
\DeclareMathOperator{\Grig}{Grig}
\DeclareMathOperator{\core}{core}
\newcommand{\F}{\mathrm{F}}
\newcommand{\clone}{\kappa}
\newcommand{\symmclone}{\varsigma}
\newcommand{\feet}{\phi}
\newcommand{\defeq}{\mathbin{\vcentcolon =}}
\newcommand{\eqdef}{\mathbin{=\vcentcolon}}
\newcommand{\optionalarg}[2]{
\ifthenelse{\equal{#2}{}}{%
#1}{%
#1(#2)}
}
\newcommand{\Thomp}[1]%
   {\optionalarg{\mathscr{T}}{#1}}                 
\newcommand{\Thkern}[1]%
   {\optionalarg{\mathscr{K}}{#1}}                 
\newcommand{\Groupoid}[1]%
   {\optionalarg{\mathscr{G}}{#1}}                 
\newcommand{\Stein}[1]%
   {\optionalarg{\mathscr{X}}{#1}}                 
\newcommand{\dlkmodel}[2]                          
   {\optionalarg{\mathscr{L}_{#2}}{#1}}
\newcommand{\Fbr}%
   {F_{\operatorname{br}}}                 
\newcommand{\Vbr}%
   {V_{\operatorname{br}}}                 
\newcommand{\Vmock}%
   {V_{\operatorname{mock}}}                 
\newcommand{\Vloop}%
   {V_{\operatorname{loop}}}                 
\newcommand{\Floop}%
   {F_{\operatorname{loop}}}                 
\begin{document}

\title{Almost-automorphisms of trees, cloning systems and finiteness properties}
\date{\today}
\subjclass[2010]{Primary 20F65;   
                 Secondary 57M07, 
                 }

\keywords{Tree automorphism, almost-automorphism, R\"over--Nekrashevych group, self-similar group, Thompson's group, finiteness properties, cloning system}

\author[R.~Skipper]{Rachel Skipper}
\address{Mathematics Institute, University of G{\"o}ttingen, Bunsenstrasse 3-5, 37073 G{\"o}ttingen, Germany}
\email{skipper.rachel.k@gmail.com}

\author[M.~C.~B.~Zaremsky]{Matthew C.~B.~Zaremsky}
\address{Department of Mathematics and Statistics, University at Albany (SUNY), Albany, NY 12222, USA}
\email{mzaremsky@albany.edu}

\thanks{The first author was partially supported by a grant from the Simons Foundation (\#245855 to Marcin Mazur).}

\begin{abstract}
 We prove that the group of almost-automorphisms of the infinite rooted regular $d$-ary tree $\tree_d$ arises naturally as the Thompson-like group of a so called $d$-ary cloning system. A similar phenomenon occurs for any R\"over--Nekrashevych group $V_d(G)$, for $G\le \Aut(\tree_d)$ a self-similar group. We use this framework to expand on work of Belk and Matucci, who proved that the R\"over group, using the Grigorchuk group for $G$, is of type $\F_\infty$. Namely, we find some natural conditions on subgroups of $G$ to ensure that $V_d(G)$ is of type $\F_\infty$, and in particular we prove this for all $G$ in the infinite family of \v Suni\'c groups. We also prove that if $G$ is itself of type $\F_\infty$ then so is $V_d(G)$, and that every finitely generated virtually free group is self-similar, so in particular every finitely generated virtually free group $G$ yields a type $\F_\infty$ R\"over--Nekrashevych group $V_d(G)$.
\end{abstract}

\maketitle
\thispagestyle{empty}

\section*{Introduction}\label{sec:intro}

Let $\tree_d$ be the infinite rooted regular $d$-ary tree, and $\AAut(\tree_d)$ its group of almost-automorphisms. Given a self-similar group of automorphisms $G\le \Aut(\tree_d)$, the subgroup of $\AAut(\tree_d)$ that ``locally looks like'' $G$ is the R\"over--Nekrashevych group $V_d(G)$, sometimes called the Nekrashevych group. These groups were introduced in this degree of generality by Nekrashevych in \cite{nekrashevych04}. Before this, R\"over had considered $V_2(\Grig)$ for $\Grig$ the Grigorchuk group \cite{roever99}. The terms Nekrashevych group and R\"over--Nekrashevych group are also sometimes used to mean $V_d(G)$ only for the case of $G$ a finite subgroup of $S_d$, acting on $\tree_d$ by viewing its vertices as words in $\{1,\dots,d\}$. Here we will always use the term ``R\"over--Nekrashevych group,'' and will always allow any self-similar $G\le \Aut(\tree_d)$. The ``R\"over group'' is $V_2(\Grig)$.

Recall that a group is of \emph{type $F_n$} if it admits a classifying space with compact $n$-skeleton, and \emph{type $F_\infty$} if it is type $\F_n$ for all $n$. These \emph{finiteness properties} are a natural extension of the classical finiteness properties of being finitely generated (type $\F_1$) and finitely presentable (type $\F_2$). There is a large amount of literature devoted to finding finiteness properties of groups in the extended family of Thompson's groups. Usually the groups are of type $\F_\infty$, e.g., see \cite{belk16,brown87,bux16,farley15,fluch13,martinez-perez16,nucinkis18,thumann17}, though not always, e.g., Belk--Forrest's \emph{basilica Thompson group} $T_B$ \cite{belk15} is type $\F_1$ but not $\F_2$ \cite{witzel16}. Also, it is possible to build \emph{ad hoc} Thompson-like groups with arbitrary finiteness properties, using ``cloning systems'' \cite{witzel18}.

In this paper we prove that $\AAut(\tree_d)$ arises naturally from a ``$d$-ary cloning system'' on the wreath products $S_n \wr \Aut(\tree_d)$. A $d$-ary cloning system is a natural generalization of the notion of cloning system from \cite{witzel18}, which is the $d=2$ case. Similarly, for any self-similar $G\le\Aut(\tree_d)$ the R\"over--Nekrashevych group $V_d(G)$ arises from a $d$-ary cloning system on the groups $S_n \wr G$. The results are as follows (with the notation $\Thomp{-}$ explained in Subsection~\ref{sec:gen_cloning}).
\begin{main:aaut_clone}
 We have $\AAut(\tree_d)\cong\Thomp{S_*\wr\Aut(\tree_d)}$.
\end{main:aaut_clone}

\begin{main:nek_clone}
 For any self-similar $G\le\Aut(\tree_d)$, we have $V_d(G)\cong\Thomp{S_*\wr G}$.
\end{main:nek_clone}

Corollary~\ref{cor:nek_clone} allows us to prove the following:

\begin{main:Finfty_inherited}
 Let $G\le \Aut(\tree_d)$ be self-similar and of type $\F_\infty$. Then $V_d(G)$ is of type $\F_\infty$.
\end{main:Finfty_inherited}

See Remark~\ref{rmk:G_F_infty} for more thoughts on the proof of this result. In order to further populate the family of self-similar groups of type $\F_\infty$, we prove in Corollary~\ref{cor:vfree} that every finitely generated virtually free group is self-similar. In particular every finitely generated virtually free group yields a type $\F_\infty$ R\"over--Nekrashevych group.

We also find some sufficient conditions on $G$ to ensure that $V_d(G)$ is of type $\F_\infty$, even if $G$ is not. These conditions are the most general we could find for the strategy in \cite{belk16}, dealing with $V_2(\Grig)$, to still work. The result is as follows:

\begin{main:good_H}
 Let $G\le \Aut(\tree_d)$ be self-similar and suppose there exists $H\le G$ such that $H$ is nuclear in $G$ (see Definition~\ref{def:nuclear}) and $A$-coarsely self-similar (see Definition~\ref{def:Acss}), and $(H,A)$ is orderly (see Definition~\ref{def:orderly}) for some finite $A\subseteq V_d\cap\Aut(\tree_d)$. If $H$ is of type $\F_\infty$ then so is $V_d(G)$.
\end{main:good_H}

One of our main motivations here is that, since the $V_d(G)$ are often (virtually) simple (for a precise statement see \cite[Theorems~9.11 and~9.14]{nekrashevych04}), it is a pleasant surprise that $\Thomp{S_*\wr G}$ is isomorphic to $V_d(G)$, seeing as previous examples of Thompson groups arising from cloning systems tended to not be (virtually) simple. When the present paper was written it was still an open problem to find examples in the world of Thompson groups of simple groups of type $\F_n$ but not $\F_{n+1}$, for $n\ge 2$. After this paper was written, this problem was solved by the authors together with Stefan Witzel in \cite{skipper19}

This paper is organized as follows. First we establish our groups of interest in Section~\ref{sec:groups}. In Section~\ref{sec:cloning} we define $d$-ary cloning systems and prove that $\AAut(\tree_d)$ and the $V_d(G)$ all arise from such structures. In Sections~\ref{sec:large_cpx} and~\ref{sec:stein}, which are quite technical, we discuss complexes on which these groups act and find some conditions under which $V_d(G)$ is of type $\F_\infty$. Finally, we discuss our examples of particular interest in Section~\ref{sec:ex}.

\subsection*{Acknowledgments} We are grateful to Jim Belk and Francesco Matucci for supplying some helpful information about their paper \cite{belk16}, and to the referee for useful remarks and suggestions.


\section{The groups}\label{sec:groups}

In this section we introduce the groups we will study, all of which will be groups of self-similar automorphisms or almost-automorphisms of the infinite regular rooted $d$-ary tree $\tree_d$.

\subsection{Self-similar groups}\label{sec:selfsim}

Let $X$ be a finite set with $d$ elements, called an \emph{alphabet}, and let $X^*$ be the set of all finite words over $X$, including the empty word $\varnothing$. Then $X^*$ is the vertex set of the infinite rooted regular $d$-ary tree $\tree_d$, where two vertices $u$ and $v$ are connected by an edge whenever $ux=v$ or $vx=u$ for some $x\in X$.

An \emph{automorphism} of $\tree_d$ is a bijection from $X^*$ to $X^*$ that fixes the root and preserves incidence. Denote by $\Aut(\tree_d)$ the group of all automorphisms of $\tree_d$. As described in \cite[Subsection~1.3.2]{nekrashevych05}, an automorphism $f$ of $\tree_d$ can be described by labeling each element $v$ of $X^*$ by a permutation $f|_v\in S_d$. The automorphism $f$ is encoded by $\{f|_v \mid v \in X^*\}$ via the declaration that for a vertex $u=x_1x_2 \dots x_m \in X^*$, the action of $f$ on $u$ is given by
$$f(u)=f|_\varnothing(x_1) f|_{x_1}(x_2) f|_{x_1 x_2}(x_3)\cdots f|_{x_1\cdots x_{m-1}}(x_m)\text{.}$$

\begin{definition}[Wreath recursion, state]
 For $G$ a group we use the wreath product notation $S_d \wr G \defeq S_d \ltimes (G\times \cdots \times G)$, so the acting group is written on the left. The above discussion (see also \cite[Proposition~1.4.2]{nekrashevych05}) gives us an isomorphism $\Aut(\tree_d) \cong S_d \wr (S_d \wr (S_d \cdots))$ and in particular $\Aut(\tree_d)\cong S_d \wr \Aut(\tree_d)$. Under this isomorphism, an element $f\in \Aut(\tree_d)$ can be decomposed as
$$f=\sigma(f_1, \dots, f_d)\text{,}$$
 where $\sigma \in S_d$ and $f_i$ is the restriction of the permutation labeling of $f$ to the $i$th subtree rooted at the first level (canonically identified with the original $\tree_d$).  We refer to $f_i$ as the \emph{state of $f$ at the $i$th vertex}. Iterating this decomposition, for each $u\in X^*$ we obtain $f_u$, the \emph{state of $f$ at the vertex $u$}.
\end{definition}

Let $\rho \colon \Aut(\tree_d)\to S_d$ be the map $f\mapsto f|_\varnothing$, so the wreath recursion of $f$ is
$$f=\rho(f)(f_1,\dots,f_d)\text{.}$$

\begin{definition}[Self-similar]
 We say a group $G\le \Aut (\tree_d)$ is \emph{self-similar} if for all $g=\rho(g)(g_1, \dots, g_d)\in G$ and for all $i$, we have $g_i\in G$.
\end{definition}

A property that frequently arises in the subject of self-similar groups and will be relevant to the examples in Section~\ref{sec:ex} is that of contracting.

\begin{definition}[Contracting, nucleus]
 A self-similar group $G$ is called \emph{contracting} if there exists a finite subset $S\subseteq G$ such that for all $g\in G$ there exists $n\in\N$ with the property that for every vertex $u$ of length greater than or equal to $n$, the state of $g$ at $u$ is in $S$. The smallest such $S$ is called the \emph{nucleus} of $G$.
\end{definition}

\subsection{Up from finite index}\label{sec:finite_index}

In this subsection we prove that in many cases if an abstract group contains a finite index copy of a self-similar group of automorphisms of some tree then the group itself is isomorphic to a self-similar group of automorphisms of some (possibly other) tree. In other words, virtually self-similar often implies self-similar, and so examples of self-similar groups are more ubiquitous than one might expect.

For a group $G$, a \emph{virtual endomorphism} of $G$ is a homomorphism $\phi$ from a finite index subgroup $K$ of $G$ to $G$. The \emph{$\phi$-core} of $K$, denoted $\phi$-core$(K)$, is the maximal normal subgroup $\tilde{K}$ of $K$ that is $\phi$-invariant (i.e., $\phi(\tilde{K}) \leq \tilde{K}$). These $\phi$-cores always exist, as the next result shows.

\begin{cit}\cite[Proposition~2.7.5]{nekrashevych05}\label{cit:nek}
 $$\phi\text{-core}(K)=\bigcap_{m \geq 1} \bigcap_{g\in G} g^{-1} \text{Domain}(\phi^m)g\text{.}$$
\end{cit}

In words, the $\phi$-core is the normal core of the iterated pullback $\cdots \phi^{-1}\phi^{-1} K$. The virtual endomorphism $\phi$ is called \textit{simple} if the $\phi$-core is trivial.

Self-similar groups have a strong connection to virtual endomorphisms. In particular, if a group $G$ admits a simple virtual endomorphism then it is self-similar, as the next result shows.

\begin{cit}\cite[Theorem~6]{berlatto07}\label{cit:berlatto}
 Let $G$ be a group, $K$ a finite index subgroup of $G$, and $X$ a left transversal of $K$ in $G$. Let $\rho$ be the permutation representation of $G$ on $X$. In particular for $g\in G$ and $x\in X$, we have $\rho(g)(x)$ equals the $x'\in X$ satisfying $gxK=x'K$. Also let $\tree_d$ be the tree with vertex set $X^*$ (so $d=|X|$) and let $\phi$ be a virtual endomorphism from $K$ to $G$. Then the quadruple $(G, K, \phi, X)$ provides a representation $\psi\colon G\to \Aut(\tree_d)$ recursively defined by
$$\psi(g)=\rho(g)\{\psi(\phi(\rho(g)(x)^{-1}gx))\mid x\in X \}\text{.}$$
 Furthermore, $\ker(\psi)=\phi$-core($K$). In particular if $\phi$ is simple then $\psi$ is faithful.
\end{cit}

In \cite{berlatto07} this is phrased using different left/right conventions, but the above form is the one we want.

Conversely, assume that $G$ is a self-similar group. For a fixed $x\in X$, let $\phi_x$ be the projection $\Stab_G(x)\to G$ onto the $x$th coordinate. This produces a 3-tuple $(G,\Stab_G(x),\phi_x)$ where $\phi_x$ is a virtual endomorphism. In this case, the $\phi_x$-core is 
$$\bigcap_{m\ge 1}\bigcap_{g\in G} g^{-1}\Stab_G(x^m)g$$
where $x^m$ is the vertex $xx\cdots x$.

\begin{remark}
 It is likely that the virtual endomorphism $\phi_x$ will not recreate the known self-similar action of $G$. Instead, it may produce a different self-similar action on a possibly different tree. In fact, there are self-similar group actions which cannot be obtained from Citation~\ref{cit:berlatto}, such as the ones described in Example~\ref{ex:fss_gp}.
\end{remark}

Next, we show that self-similar actions can often be induced up from subgroups of finite index acting self-similarly.

\begin{proposition}[Inducing self-similar actions]\label{prop:induce}
 Let $G$ be a group and let $K$ be a finite index subgroup of $G$. Suppose that $(K,\tilde{K},\phi,X)$ defines a faithful self-similar action of $K$ on the tree with vertex set $X^*$ as in Citation~\ref{cit:berlatto}. Let $\tilde{\phi}$ be the homomorphism from $\tilde{K}$ to $G$ defined by
$$\tilde{K} \xrightarrow{\phi} K \hookrightarrow G$$
and let $Y$ be a right transversal to $\tilde{K}$ in $G$. Then the 4-tuple $(G,\tilde{K},\tilde{\phi},Y)$ defines a faithful self-similar action of $G$ on the tree with vertex set $Y^*$.
\end{proposition}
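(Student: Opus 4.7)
The plan is to simply apply Citation~\ref{cit:berlatto} to the 4-tuple $(G,\tilde{K},\tilde{\phi},Y)$ and reduce the proposition to checking faithfulness of the resulting representation. First I would observe that $\tilde{K}$ has finite index in $G$, because $[G:\tilde{K}]=[G:K]\cdot[K:\tilde{K}]$ and both factors are finite, so $\tilde{\phi}\colon\tilde{K}\to G$ is a bona fide virtual endomorphism of $G$. Citation~\ref{cit:berlatto} (up to the routine swap between left and right transversal conventions, since the citation uses a left transversal while the proposition is phrased with a right transversal $Y$) then produces a self-similar representation $\psi\colon G\to\Aut(\tree_{|Y|})$ with $\ker\psi=\tilde{\phi}\text{-core}(\tilde{K})$. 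The entire content of the proposition is therefore the assertion that this $\tilde{\phi}$-core is trivial.

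To see this, I would use Citation~\ref{cit:nek} to write out both the $\tilde{\phi}$-core of $\tilde{K}$ (taken with ambient group $G$) and the $\phi$-core of $\tilde{K}$ (taken with ambient group $K$). Since $\tilde{\phi}$ is literally $\phi$ post-composed with the inclusion $K\hookrightarrow G$, the iterates $\tilde{\phi}^m$ and $\phi^m$ have identical domains and agree pointwise on them for every $m$. Hence
\[
\tilde{\phi}\text{-core}(\tilde{K})=\bigcap_{m\ge 1}\bigcap_{g\in G}g^{-1}\operatorname{Domain}(\phi^m)\,g,\qquad \phi\text{-core}(\tilde{K})=\bigcap_{m\ge 1}\bigcap_{k\in K}k^{-1}\operatorname{Domain}(\phi^m)\,k,
\]
and the latter is trivial by the hypothesis that the action of $K$ coming from $(K,\tilde{K},\phi,X)$ is faithful.

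The final step is then immediate: since $K\subseteq G$, the intersection defining the $\tilde{\phi}$-core ranges over a superset of the conjugating elements appearing in the intersection defining the $\phi$-core, so $\tilde{\phi}\text{-core}(\tilde{K})\subseteq \phi\text{-core}(\tilde{K})=\{1\}$. Thus $\psi$ is faithful and the proposition follows. I do not anticipate a serious obstacle in this argument; the only mildly delicate point is the left/right transversal bookkeeping, which can be handled either by invoking a right-transversal variant of Citation~\ref{cit:berlatto} or by replacing $Y$ with the left transversal $Y^{-1}=\{y^{-1}\mid y\in Y\}$, and in either case the core containment above is unaffected.
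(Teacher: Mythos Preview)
Your proposal is correct and follows essentially the same approach as the paper: apply Citation~\ref{cit:berlatto} to reduce to showing the $\tilde{\phi}$-core is trivial, then use Citation~\ref{cit:nek} and the containment $K\subseteq G$ to conclude that $\tilde{\phi}\text{-core}(\tilde{K})\subseteq \phi\text{-core}(\tilde{K})=\{1\}$. The paper's proof is just a terser version of exactly this argument, and does not explicitly address the left/right transversal point you flag.
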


\begin{proof}
 By Citation~\ref{cit:berlatto}, the 4-tuple $(G, \tilde{K}, \tilde{\phi}, Y)$ defines a self-similar action of $G$ on the tree with vertex set $Y^*$. Thus it suffices to show the action is faithful. By Citation~\ref{cit:nek} and Citation~\ref{cit:berlatto}, the kernel of the action is 
$$\bigcap_{m \geq 1} \bigcap_{g\in G} g^{-1} \text{Domain}(\tilde{\phi}^m)g\leq \bigcap_{m \geq 1} \bigcap_{g\in K} g^{-1} \text{Domain}(\phi^m)g=\{1\}\text{.}$$
\end{proof}

The hypothesis of Proposition~\ref{prop:induce} occurs when the self-similar action of the finite index subgroup comes from a simple virtual endomorphism or when a self-similar group $K$ has the property that $\bigcap_{m \geq 1} \bigcap_{g\in K} g^{-1} \Stab_K(x^m) g =\{1\}$, for instance when $K$ acts transitively on the levels of the tree.

As an application of Proposition~\ref{prop:induce}, we use a result from \cite{steinberg11} to prove that every finitely generated virtually free group is self-similar.

Consider the following recursively defined automorphisms of $\tree_2$:
$$a=(0~1)(c,b), \quad b=(0~1)(b,c), \quad c=\sigma_0(d_1,d_1)$$
$$d_i=\sigma_i(d_{i+1}, d_{i+1}), 1\le i \le n-1 \text{ and } d_n=\sigma_n(a,a)$$
where an odd number of the $\sigma_i$'s are not the identity permutation.

\begin{cit}\cite{steinberg11}
 For $n$ even, the automorphisms $a$, $b$, $c$, $d_1,\dots,d_n$ generate a free group of rank $n+3$.
\end{cit}

\begin{lemma}\label{lem:level_transitive}
 Let $a$, $b$, $c$, $d_1,\dots,d_n$ be defined as above and suppose $\sigma_1=(0~1)$ and $\sigma_i=\id$ for $i\ne 1$. Then $a$ acts transitively on the levels of $\tree_2$.
\end{lemma}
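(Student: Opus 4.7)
The plan is to reduce level-transitivity of $a$ to a parity condition on the top permutations of $a$'s states, and then verify that condition directly from the wreath recursion. For $g \in \Aut(\tree_2)$ and $k \ge 0$, I would define $\epsilon_k(g) \in \Z/2$ to be the parity of the number of level-$k$ vertices $v$ at which $g|_v$ has top permutation $(0~1)$. Using the identity $(gh)|_v = g|_{h(v)} h|_v$ together with the fact that $h$ permutes level $k$ and the abelianness of $S_2$, one checks that $\epsilon_k$ is a group homomorphism $\Aut(\tree_2) \to \Z/2$.

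The key claim is then that $a$ is level-transitive if and only if $\epsilon_k(a) = 1$ for every $k \ge 0$. I would argue this inductively: granting level-$k$ transitivity of $a$, the element $a^{2^k}$ fixes every level-$k$ vertex $u$, and the state $(a^{2^k})|_u = a|_{a^{2^k-1}(u)} \cdots a|_{a(u)}\, a|_u$ runs (in some order) through all the states $a|_v$ for $v$ on level $k$. Its top permutation is therefore $\epsilon_k(a)$, independent of $u$; this equals $(0~1)$ under every level-$k$ vertex exactly when $\epsilon_k(a) = 1$, in which case the level-$k$ cycle lifts to a single $2^{k+1}$-cycle on level $k+1$, while if $\epsilon_k(a) = 0$ the power $a^{2^k}$ fixes level $k+1$ and the orbit splits in two.

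It then suffices to compute $\epsilon_k(a)$. Among $\{a, b, c, d_1, \ldots, d_n\}$, only $a = (0~1)(c,b)$ and $b = (0~1)(b,c)$ have distinct child-states; for $c = (d_1, d_1)$, $d_i = (d_{i+1}, d_{i+1})$ (when $1 \le i \le n-1$) and $d_n = (a,a)$ both children agree. The recursion $\epsilon_k(g) = \epsilon_{k-1}(g_0) + \epsilon_{k-1}(g_1) \pmod 2$ therefore forces $\epsilon_k(c) \equiv \epsilon_k(d_i) \equiv 0$ for all $k \ge 1$, and $\epsilon_0(c) = 0$ by inspection. The recursion collapses to $\epsilon_k(b) \equiv \epsilon_{k-1}(b)$ and $\epsilon_k(a) \equiv \epsilon_{k-1}(b)$ for $k \ge 1$, which together with $\epsilon_0(a) = \epsilon_0(b) = 1$ yields $\epsilon_k(a) = 1$ for every $k \ge 0$. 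The main obstacle is the equivalence in the second paragraph; the critical observation is the $u$-independence of the top permutation of $(a^{2^k})|_u$, after which the remaining mod-$2$ calculation is a short induction.
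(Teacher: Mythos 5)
Your proof is correct, and it follows the same overall strategy as the paper --- reduce level-transitivity to the condition that the number of level-$k$ vertices with non-trivial permutation label is odd for every $k$ --- but both halves are executed differently. The paper simply cites this criterion (\cite[Lemma~1.10.3]{nekrashevych05}), whereas you re-derive it via the orbit-lifting argument with $a^{2^k}$; that argument is sound (the state of $a^{2^k}$ at a level-$k$ vertex is the product of the states of $a$ over the whole level, so by commutativity of $S_2$ its top permutation is $(0~1)^{\epsilon_k(a)}$, independent of the vertex), and it makes the lemma self-contained. For the parity computation itself, the paper inducts on the level by tracking the multiset of states and arguing that parity is preserved vertex-by-vertex; this forces a separate ad hoc observation that the number of $d_1$'s occurring as states on any given level is even. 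Your approach instead sets up the recursion $\epsilon_k(g)=\epsilon_{k-1}(g_0)+\epsilon_{k-1}(g_1)$ for each generator and solves the resulting system, which kills off $c$ and the $d_i$ immediately (their two child-states coincide) and collapses to $\epsilon_k(a)=\epsilon_{k-1}(b)=1$; this is cleaner and avoids the even-count-of-$d_1$'s step entirely. One cosmetic caution: you use $g|_v$ both for the permutation label and, in the displayed factorization of $(a^{2^k})|_u$, for the state; the chain rule there is for states, and only afterwards do you pass to top permutations. The mathematics is unaffected, but the notation should be disambiguated in a written version.
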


\begin{proof}
 By \cite[Lemma~1.10.3]{nekrashevych05}, an automorphism $g$ of $\tree_2$ is transitive on all levels of $\tree_2$ if and only if for every $n\ge 0$ the number of vertices $v$ at level $n$ where $g|_v$ is non-trivial is odd. We claim this is the case for $g=a$.
 
 We induct on the level in $\tree_2$. Clearly, the number of vertices on level $0$ with non-trivial permutation label is odd since $a|_\varnothing=(0~1)$.
 
 Now suppose for $n\ge 1$, $a|_v$ is non-trivial for an odd number of vertices $v$ on the $n$th level. We claim that the parity of the number of non-trivial permutations does not change from level $n$ to level $n+1$. Indeed, fix $v$ a vertex on the $n$th level and let $v_0$ and $v_1$ be the two vertices adjacent to $v$ on level $(n+1)$. If $a_v\in \{a, b, c, d_2, \dots, d_n\}$ then $a|_v$ is non-trivial if and only if exactly one of $a|_{v_i}$ is non-trivial for $i=0,1$. In particular the parity of the number of non-trivial permutations is preserved when passing from $v$ to $v_0$ and $v_1$. The only remaining possibility is that $a_v=d_1$. In this case, $a|_v$ is non-trivial and $a|_{v_0}$ and $a|_{v_1}$ are either both trivial (if $n>4$) or both non-trivial (if $n=4$). But the number of $d_1$'s appearing as states of $a$ on a given level is visibly even, and so in this case too, the parity of the number of non-trivial permutations is preserved when passing from $v$ to $v_0$ and $v_1$. This completes the induction.
\end{proof}

\begin{corollary}\label{cor:vfree}
 Every finitely generated virtually free group $G$ admits a faithful self-similar action on some $\tree_d$.
\end{corollary}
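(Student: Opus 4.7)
The plan is to apply Proposition~\ref{prop:induce} to $G$, using as input a finite-index subgroup equipped with a faithful level-transitive self-similar action supplied by Steinberg's construction. First I would reduce to a convenient finite-index free subgroup. Since $G$ is finitely generated and virtually free, it contains a finite-index free subgroup $F$ of some rank $k$. If $k = 0$ then $G$ is finite and embeds as a self-similar subgroup of $\Aut(\tree_{|G|})$ by letting each permutation act on the first level with trivial states, so I may assume $k \geq 1$.

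For $k = 1$, the adding machine $a = (0\,1)(1,a)$ gives $F \cong \Z$ a faithful self-similar action on $\tree_2$ that is transitive on every level. For $k \geq 2$, the Nielsen--Schreier formula says that the finite-index subgroups of $F$ are free of ranks $m(k-1)+1$ for $m \geq 1$, and a short case check shows this set contains some $n+3$ with $n \geq 1$ (take $n = k-3$ if $k \geq 4$; $n = 1$ with $m = 3$ if $k = 2$; $n = 2$ with $m = 2$ if $k = 3$). Replacing $F$ by such a subgroup — still of finite index in $G$ — reduces to the case where $F$ is free of rank $n+3$ with $n \geq 1$. Now the Steinberg citation, applied with $\sigma_1 = (0\,1)$ and $\sigma_i = \id$ for $i \neq 1$, realizes $F$ as a self-similar subgroup of $\Aut(\tree_2)$ generated by $a, b, c, d_1, \dots, d_n$, and Lemma~\ref{lem:level_transitive} tells me that $a$, and hence $F$, acts transitively on every level of $\tree_2$.

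In either of the surviving cases, the paragraph immediately following Proposition~\ref{prop:induce} observes that level-transitivity makes the projection $\phi \colon \Stab_F(x) \to F$ onto the coordinate at $x$ a simple virtual endomorphism, so the quadruple $(F, \Stab_F(x), \phi, X)$ (with $X$ the first level of the relevant tree) satisfies the hypothesis of Proposition~\ref{prop:induce}. Applying that proposition with $K = F$ then produces a faithful self-similar action of $G$ on the tree with vertex set $Y^*$, where $Y$ is a right transversal to $\Stab_F(x)$ in $G$; this is the desired conclusion.

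The main obstacle I anticipate is the arithmetic rank-matching in the second paragraph: Steinberg's construction directly produces self-similar free groups only of rank at least four, so low-rank free subgroups must be handled by first passing to a further finite-index subgroup, and the cases $k = 0$ and $k = 1$ require the two separate auxiliary self-similar actions mentioned above. Beyond this bookkeeping, the argument is a direct combination of Steinberg's citation, Lemma~\ref{lem:level_transitive}, and Proposition~\ref{prop:induce}.
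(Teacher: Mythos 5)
Your proposal is correct and follows the same route as the paper: pass to a finite-index free subgroup of rank expressible as $n+3$, invoke the Steinberg citation together with Lemma~\ref{lem:level_transitive} to get a faithful level-transitive self-similar action, and induce up via Proposition~\ref{prop:induce}. In fact your treatment is more careful than the paper's one-line argument, whose opening claim that every such $G$ is virtually $F_n$ for some $n\ge 4$ fails literally when $G$ is finite or virtually cyclic; your separate handling of $k=0$ (regular representation on the first level of $\tree_{|G|}$) and $k=1$ (the adding machine) patches exactly those cases, and your Nielsen--Schreier bookkeeping for $k=2,3$ is the right way to reach rank at least $4$.
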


\begin{proof}
 Every such $G$ is virtually $F_n$ for some odd $n>4$. By Lemma~\ref{lem:level_transitive} the hypothesis of Proposition~\ref{prop:induce} is met, using $K=F_n$. Proposition~\ref{prop:induce} now gives us the result.
\end{proof}

\subsection{Almost-automorphism groups}\label{sec:aaut}

In addition to $\Aut(\tree_d)$ we will also be interested in $\AAut(\tree_d)$, the group of \emph{almost-automorphisms} of $\tree_d$. An almost-automorphism of $\tree_d$ is not an automorphism of $\tree_d$, but rather a certain self-homeomorphism of the boundary $\partial \tree_d$ ``locally determined'' by automorphisms of $\tree_d$. To make this precise, we need some setup.

\begin{definition}[Complete subtree, $d$-caret, complement]\label{def:subtrees}
 A subtree $T$ of $\tree_d$ is called a \emph{rooted complete subtree} if it has the same root as $\tree_d$ and for each non-leaf vertex $v$ of $T$, every vertex of $\tree_d$ adjacent to $v$ lies in $T$. The \emph{$d$-caret} is the rooted complete subtree of $\tree_d$ where every non-root vertex is a leaf. The \emph{complement} $T^c$ of $T$ is the set theoretic complement $\tree_d \setminus \mathring{T}$, where $\mathring{T}$ is $T$ with its leaves removed (note that the interiors of the edges incident to the leaves remain in $\mathring{T}$).
\end{definition}

Let $T$ be a finite rooted complete subtree of $\tree_d$ with $n$ leaves. The complement $T^c$ is a forest consisting of $n$ disjoint copies of $\tree_d$. In particular if $T_-$ and $T_+$ are two finite rooted $d$-ary trees, each with $n$ leaves, then given $n$ automorphisms $f_1,\dots,f_n \in \Aut(\tree_d)$ and a permutation $\sigma\in S_n$, we can consider the bijection $\sigma(f_1,\dots,f_n) \colon T_+^c \to T_-^c$ given by, for each $1\le i\le n$, applying $f_i$ to the $i$th tree of $T_+^c$ and then sending it to the $\sigma(i)$th tree of $T_-^c$.

\begin{definition}[Almost-automorphism]
 A triple $(T_-,\sigma(f_1,\dots,f_n),T_+)$ with $T_\pm$ and $\sigma(f_1,\dots,f_n)$ as above induces a self-homeomorphism of $\partial \tree_d$, which we denote by
$$((T_-,\sigma(f_1,\dots,f_n),T_+))$$
and call an \emph{almost-automorphism} of $\tree_d$. We denote by $\AAut(\tree_d)$ the group of all almost-automorphisms of $\tree_d$.
\end{definition}

Two different triples may represent the same almost-automorphism, for instance
$$((T,(\id,\dots,\id),T))=((U,(\id,\dots,\id),U))$$
for any $T$ and $U$. More generally, \cite[Lemma~2.3]{leboudec17} says the following:

\begin{cit}\cite[Lemma~2.3]{leboudec17}\label{cit:same_aaut}
 The almost-automorphisms $((T_-,\sigma(f_1,\dots,f_n),T_+))$ and $((U_-,\tau(g_1,\dots,g_m),U_+))$ are equal if and only if there exist finite rooted complete subtrees $V_-$ and $V_+$ of $\tree_d$, with $T_-,U_-\subseteq V_-$ and $T_+,U_+\subseteq V_+$, such that $\sigma(f_1,\dots,f_n)$ and $\tau(g_1,\dots,g_m)$ restrict to bijections $V_+^c \to V_-^c$ and these bijections coincide.
\end{cit}

We recall here some important subgroups of $\AAut(\tree_d)$, namely the Higman--Thompson group and the family of R\"over--Nekrashevych groups. See also Definition~3.2 and Section~7.2 of \cite{leboudec17}. The notation $\vv{\id}$ denotes a tuple whose entries are all $\id$.

\begin{definition}[Higman--Thompson group]
 The \emph{Higman--Thompson group} $V_d$ is the subgroup of $\AAut(\tree_d)$ consisting of those almost-automorphisms of the form
$$((T_-,\sigma\vv{\id},T_+))\text{.}$$
\end{definition}

\begin{definition}[R\"over--Nekrashevych group]
 Let $G$ be a self-similar subgroup of $\Aut(\tree_d)$. The \emph{R\"over--Nekrashevych group} $V_d(G)$ is the subgroup of $\AAut(\tree_d)$ consisting of all almost-automorphisms of the form $((T_-,\sigma(g_1,\dots,g_n),T_+))$ for $g_i\in G$ and $\sigma\in S_n$.
\end{definition}

In \cite[Subsection~7.2]{leboudec17} $V_d(G)$ is denoted $\AAut_G(\tree_d)$. Sometimes $V_d(G)$ is called the \emph{Nekrashevych group} of $G$, and sometimes the terms Nekrashevych group and/or R\"over--Nekrashevych group are reserved for the case when $G$ is finite. Here we will always allow any self-similar $G$, and will use the term R\"over--Nekrashevych group. The \emph{R\"over group} is the specific group $V_2(\Grig)$ for $\Grig$ the Grigorchuk group, first introduced in \cite{roever99} and proved to be of type $\F_\infty$ in \cite{belk16}.

\begin{example}\label{ex:fss_gp}
 For any $D\le S_d$ we can embed $D$ into $\Aut(\tree_d)$ by viewing the vertices of $\tree_d$ as words in the alphabet $\{1,\dots,d\}$. Let $\iota\colon D\to \Aut(\tree_d)$ be this embedding, and let $G\defeq \iota(D)$. Then the wreath recursion of any $g\in G$ is $g=(\iota^{-1}(g))(g,\dots,g)$, so $G$ is self-similar, and it therefore makes sense to consider the R\"over--Nekrashevych group $V_d(G)$. This is an example of an \emph{FSS group}, as in \cite{hughes09,farley15} (see in particular \cite[Remark~2.13]{farley15}), and is of type $\F_\infty$ \cite[Corollary~6.6]{farley15}. 
\end{example}

\begin{example}\label{ex:sunic}
 Let $A$ be the cylic group of order $d$, say $A=\langle a\rangle$, and view $A$ as a subgroup of $\Aut(\tree_d)$ by letting $a$ permute just the first level of $\tree_d$. Let $B$ be a finitely generated group and $\rho\colon B\to B$ an automorphism, and suppose there exists a homomorphism $\omega\colon B\to A$ such that no non-trivial $\rho$-orbit lies entirely in $\ker(\omega)$. Map $B$ to $\Aut(\tree_d)$ by declaring the wreath recursion $b=(\omega(b),\id,\dots,\id,\rho(b))$. This defines a faithful action on $\tree_d$ since we have assumed that for any $1_B\ne b\in B$ there exists $n$ such that $\omega(\rho^n(b))\ne 1_A$. Viewing $B$ as a subgroup of $\Aut(\tree_d)$ in this way, define $G_{\omega,\rho}\defeq\langle A,B\rangle$. We call this the \emph{\v Suni\'c group} for $\omega$ and $\rho$. It is self-similar by construction. See Subsection~\ref{sec:sunic} for more details and background about these groups, including a proof that the R\"over--Nekrashevych group $V_d(G)$ is of type $\F_\infty$ for any \v Suni\'c group $G$. Note that the Grigorchuk group $\Grig$ is the \v Suni\'c group with $d=2$, $B=\{1_B,b,c,d\}\cong K_4$, $\omega\colon K_4\to A$ given by $b,c\mapsto a$, and $\rho\colon K_4\to K_4$ given by $b\mapsto c \mapsto d$.
\end{example}

\begin{remark}
 In \cite{sauer17} Sauer and Thumann prove that groups like $\AAut(\tree_d)$ and some of its relatives have the natural analog of the type $\F_\infty$ property for totally disconnected locally compact groups. It seems likely that one could use our methods to recover this result for $\AAut(\tree_d)$, and possibly extend it to certain of its non-discrete subgroups. For now we will focus just on discrete examples and the classical $\F_\infty$ property, and leave questions about totally disconnected groups for the future.
\end{remark}


\section{$d$-ary cloning systems}\label{sec:cloning}

In \cite{witzel18}, the framework of \emph{cloning systems} was established to encode certain families of groups into a Thompson-like group. Examples include the classical Thompson groups $F$ and $V$, the braided Thompson groups of Brin and Dehornoy \cite{brin07,dehornoy06}, Thompson groups for upper triangular matrix groups \cite{witzel18} and the generalized Thompson groups of Tanushevski \cite{tanushevski16}. In this section we establish $d$-ary analogs, which we call \emph{$d$-ary cloning systems}, and show that $\AAut(\tree_d)$ and any R\"over--Nekrashevych group $V_d(G)$ arise as Thompson-like groups of $d$-ary cloning systems. We remark that these particular cloning systems seem somewhat related to examples developed by Tanushevski \cite{tanushevski16}, and it would be interesting to pin down the connection.

\subsection{General $d$-ary cloning systems}\label{sec:gen_cloning}

The setup in \cite{witzel18} only accounted for Thompson structures coming from binary trees. Since we are dealing with $d$-ary trees, we need to establish cloning systems for the $d$-ary situation. To avoid a long digression into the abstract world of Brin--Zappa--Sz\'ep products, we will not follow the construction in \cite{witzel18} but rather the shorter introduction to cloning systems in \cite{zaremsky18}. This has the additional advantage that certain data from \cite{witzel18}, namely a family of maps $(\iota_{m,n})_{m\le n}$, will be unnecessary. As a remark, similar, and many additional, improvements were also made by Witzel in \cite{witzel17}, also eliminating the $(\iota_{m,n})_{m\le n}$, and developing a more categorical framework for cloning systems.

Let $d\ge 2$ be an integer and $(G_n)_{n\in\N}$ a family of groups. Suppose we have a \emph{representation map} $\rho_n \colon G_n \to S_n$ to the symmetric group $S_n$, for each $n\in\N$, and an injective function $\clone_k^n \colon G_n \to G_{n+d-1}$ (not necessarily a homomorphism) for each pair $(k,n)$ with $1\le k\le n$. To be consistent with \cite{witzel18}, we will write the $\rho_n$ to the left and $\clone_k^n$ to the right of their arguments, so expressions like $\rho_n(g)$ and $(g)\clone_k^n$ will be standard.

\begin{definition}
 The triple $((G_n)_{n\in\N},(\rho_n)_{n\in\N},(\clone_k^n)_{k\le n})$ is called a \emph{$d$-ary cloning system}, and the functions $\clone_k^n$ are \emph{$d$-ary cloning maps}, provided the following axioms hold:
 
 \textbf{(C1):} (\emph{Cloning a product}) $(gh)\clone_k^n = (g)\clone_{\rho_n(h)k}^n (h)\clone_k^n$.
 
 \textbf{(C2):} (\emph{Product of clonings}) $\clone_\ell^n \circ \clone_k^{n+d-1} = \clone_k^n \circ \clone_{\ell+d-1}^{n+d-1}$.
 
 \textbf{(C3):} (\emph{Compatibility}) $\rho_{n+d-1}((g)\clone_k^n)(i) = (\rho_n(g))\symmclone_k^n(i)$ for all $i\ne k,k+1,\dots,k+d-1$.
 
 Here we always have $1\le k<\ell \le n$ and $g,h\in G_n$, and $\symmclone_k^n$ denotes the standard $d$-ary cloning maps for the symmetric groups, which we discuss in Example~\ref{ex:symm_gps}.
\end{definition}

For the sake of giving the reader some intuition, we summarize some of the background from \cite{witzel18} that has been swept under the rug. The $\rho_n$ reflect a left action of the $G_n$ on the so called forest monoid, or in this context the monoid of $d$-ary forests, and the $\clone_k^n$ reflect a right action of this monoid on the $G_n$ (or rather on their direct limit under a directed system of inclusions $\iota_{m,n}$, which for our current level of detail we are able to ignore). These mutual actions describe how to ``pull'' a $d$-caret ``through'' an element of $G_n$, namely they dictate where the $d$-caret emerges and which element of $G_{n+d-1}$ is produced. While the $\clone_k^n$ are not group homomorphisms, (C1) shows that they are sort of ``twisted'' homomorphisms, twisted via the action from the $\rho_n$. The cloning axiom (C2) mirrors the defining relations in the $d$-ary forest monoid, and (C3) ensures that the $\rho_n$ and $\clone_k^n$ behave well enough together that a Thompson-like group can be naturally constructed.

\begin{example}(Symmetric groups)\label{ex:symm_gps}
 Consider the family $(S_n)_{n\in\N}$ of symmetric groups. Let $\rho_n \colon S_n \to S_n$ be the identity and define $\symmclone_k^n \colon S_n \to S_{n+d-1}$ via
$$((\sigma)\symmclone_k^n)(i) \defeq \left\{\begin{array}{ll} \sigma(i) & \text{if } i\le k \text{ and } \sigma(i)\le \sigma(k)\\ \sigma(i)+d-1 & \text{if } i<k \text{ and } \sigma(i)>\sigma(k)\\ \sigma(i-d+1) & \text{if } i>k+d-1 \text{ and } \sigma(i-d+1)<\sigma(k)\\ \sigma(i-d+1)+d-1 & \text{ if } i\ge k+d-1 \text{ and } \sigma(i-d+1)\ge \sigma(k)\\ \sigma(k)+i-k & \text{ if } k<i<k+d-1 \text{.}\end{array}\right.$$
Then $((S_n)_{n\in\N},(\id_{S_n})_n,(\symmclone_k^n)_{k\le n})$ is a $d$-ary cloning system. There is nothing to prove for (C3), and proving (C1) and (C2) is straightforward but tedious, so we will leave this as an exercise. We refer the reader to \cite[Example~2.9]{witzel18} for a complete proof in the the $d=2$ case.
\end{example}

A picture will make this much more clear: see Figure~\ref{fig:symm_clone} for an example of $\symmclone_k^n$.

\begin{figure}[htb]
 \centering
 \begin{tikzpicture}[line width=0.8pt]
  \draw[->] (0,-2) -- (1,0); \draw[->] (1,-2) -- (2,0); \draw[->,dashed] (2,-2) -- (0,0);
  \node at (3,-1) {$\stackrel{\symmclone_3^3}{\longrightarrow}$};
  \node at (0,-2.25) {$1$};   \node at (0,0.25) {$1$};   \node at (1,-2.25) {$2$};   \node at (1,0.25) {$2$};   \node at (2,-2.25) {$3$};   \node at (2,0.25) {$3$};
  \begin{scope}[xshift=4cm]
   \draw[->] (0,-2) -- (3,0); \draw (1,-2)[->] -- (4,0); \draw (2,-2)[->,dashed] -- (0,0); \draw (3,-2)[->,dashed] -- (1,0); \draw (4,-2)[->,dashed] -- (2,0);
   \node at (0,-2.25) {$1$};   \node at (0,0.25) {$1$};   \node at (1,-2.25) {$2$};   \node at (1,0.25) {$2$};   \node at (2,-2.25) {$3$};   \node at (2,0.25) {$3$};   \node at (3,-2.25) {$4$};   \node at (3,0.25) {$4$};   \node at (4,-2.25) {$5$};   \node at (4,0.25) {$5$};
  \end{scope}
 \end{tikzpicture}
 \caption{An example of $3$-ary cloning in the symmetric groups. Here we see that $(1~2~3)\symmclone_3^3 = (1~4~2~5~3)$. The arrow getting ``cloned'' is dashed, as are the three arrows resulting from the cloning.}
 \label{fig:symm_clone}
\end{figure}
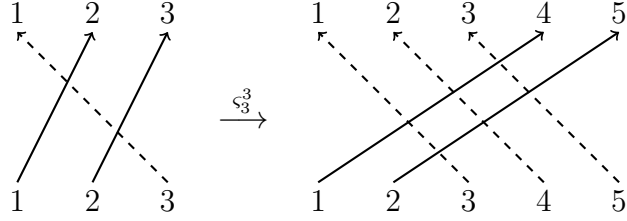

Note that we are not including ``$d$'' in the notation $\symmclone_k^n$, since it will always be clear from context, but the reader should be aware that by $\symmclone_k^n$ here we always mean the $d$-ary cloning maps on the symmetric groups, for whichever $d$ is relevant, and not just the $2$-ary cloning maps denoted $\symmclone_k^n$ in \cite{witzel18}.
 
Given a $d$-ary cloning system on a family of groups $(G_n)_{n\in\N}$, we can construct a Thompson-like group $\Thomp{G_*}$, as in \cite[Subsection~2.4]{witzel18}. An element of $\Thomp{G_*}$ is represented by a triple $(T_-,g,T_+)$, where $T_\pm$ are finite rooted $d$-ary trees with the same number of leaves, say $n$, and $g\in G_n$. (Note that the number of leaves must be of the form $1+r(d-1)$ for some $r\in\N\cup\{0\}$, so the $G_n$ for $n$ not of this form do not actually go into the construction of $\Thomp{G_*}$.) Two such triples $(T_-,g,T_+)$ and $(U_-,h,U_+)$ represent the same element of $\Thomp{G_*}$ if we can obtain one from the other via a sequence of expansions and reductions. An \emph{expansion} of $(T_-,g,T_+)$ is a triple $(T_-',(g)\clone_k^n,T_+')$, where $T_+'$ is $T_+$ with a $d$-caret added to its $k$th leaf and $T_-'$ is $T_-$ with a $d$-caret added to its $\rho_n(g)(k)$th leaf. A \emph{reduction} is the reverse of an expansion. The equivalence class of $(T_-,g,T_+)$ under expansions and reductions is denoted $[T_-,g,T_+]$.

For example the $d$-ary cloning system $((S_n)_{n\in\N},(\id_{S_n})_n,(\symmclone_k^n)_{k\le n})$ gives rise to the Higman--Thompson group $\Thomp{S_*}\cong V_d$.

There is a natural notion of a \emph{subsystem} of a $d$-ary cloning system, which we define next.

\begin{definition}[Cloning subsystem]
 Let $((G_n)_{n\in\N},(\rho_n)_{n\in\N},(\clone_k^n)_{k\le n})$ be a $d$-ary cloning system. Let $(H_n)_{n\in\N}$ be a family of subgroups $H_n\le G_n$ such that $(H_n)\clone_k^n \subseteq H_{n+d-1}$ for all $k\le n$. The families of restrictions $\rho_n'$ and $(\clone_k^n)'$, of the $\rho_n$ and $\clone_k^n$ respectively, clearly satisfy the conditions to yield a $d$-ary cloning system on $(H_n)_{n\in\N}$. We call the $d$-ary cloning system $((H_n)_{n\in\N},(\rho_n')_{n\in\N},((\clone_k^n)')_{k\le n})$ a \emph{$d$-ary cloning subsystem} of $((G_n)_{n\in\N},(\rho_n)_{n\in\N},(\clone_k^n)_{k\le n})$.
\end{definition}

A $d$-ary cloning subsystem on a family $H_n \le G_n$ yields a Thompson-like subgroup $\Thomp{H_*}\le \Thomp{G_*}$. The fact that the $H_n$ are stable under applying the cloning maps ensures that $\Thomp{H_*}$ is closed under taking products.

\subsection{$d$-ary cloning systems for tree automorphism groups}\label{sec:aut_clone}

Now consider the family of groups $(S_n \wr \Aut(\tree_d))_{n\in\N}$. For notational convenience set
$$A_n\defeq S_n \wr \Aut(\tree_d)\text{.}$$
It will be helpful to be able to visualize elements of $A_n$. We can draw an element $\sigma(f_1,\dots,f_n)\in A_n$ by drawing $n$ arrows representing $\sigma$ (like in Figure~\ref{fig:symm_clone}), labeled by elements of $\Aut(\tree_d)$ at the bases of the arrows. See Figure~\ref{fig:A_n_element} for an example.

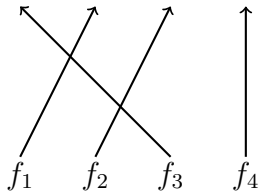
\begin{figure}[htb]
 \centering
 \begin{tikzpicture}[line width=0.8pt]
  \draw[->] (0,-2) -- (1,0); \draw[->] (1,-2) -- (2,0); \draw[->] (2,-2) -- (0,0); \draw[->] (3,-2) -- (3,0);
	\node at (0,-2.25) {$f_1$}; \node at (1,-2.25) {$f_2$}; \node at (2,-2.25) {$f_3$}; \node at (3,-2.25) {$f_4$};
 \end{tikzpicture}
 \caption{The element $(1~2~3)(f_1,f_2,f_3,f_4)$ of $A_4$.}
 \label{fig:A_n_element}
\end{figure}

Multiplication in the wreath product is mirrored in the pictures. To draw the product $\sigma(f_1,\dots,f_n)\tau(g_1,\dots,g_n)$, we stack the picture for $\sigma(f_1,\dots,f_n)$ on top of the picture for $\tau(g_1,\dots,g_n)$ and pull the $f_i$ down through $\tau$ to get the picture for $\sigma\tau(f_{\tau(1)}g_1,\dots,f_{\tau(n)}g_n)$. See Figure~\ref{fig:wreath_mult} for an example.

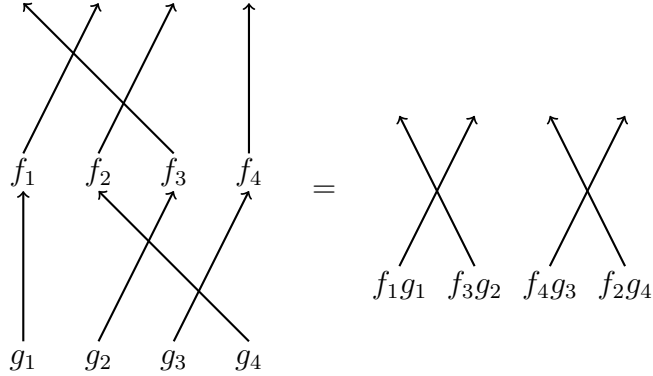
\begin{figure}[htb]
 \centering
 \begin{tikzpicture}[line width=0.8pt]
  \draw[->] (0,-2) -- (1,0); \draw[->] (1,-2) -- (2,0); \draw[->] (2,-2) -- (0,0); \draw[->] (3,-2) -- (3,0);
	\node at (0,-2.25) {$f_1$}; \node at (1,-2.25) {$f_2$}; \node at (2,-2.25) {$f_3$}; \node at (3,-2.25) {$f_4$};
	\begin{scope}[yshift=-2.5cm]
	 \draw[->] (0,-2) -- (0,0); \draw[->] (1,-2) -- (2,0); \draw[->] (2,-2) -- (3,0); \draw[->] (3,-2) -- (1,0);
	 \node at (0,-2.25) {$g_1$}; \node at (1,-2.25) {$g_2$}; \node at (2,-2.25) {$g_3$}; \node at (3,-2.25) {$g_4$};
	 \node at (4,0) {$=$};
	\end{scope}
	\begin{scope}[xshift=5cm,yshift=-1.5cm]
	 \draw[->] (0,-2) -- (1,0); \draw[->] (1,-2) -- (0,0); \draw[->] (2,-2) -- (3,0); \draw[->] (3,-2) -- (2,0);
	\node at (0,-2.25) {$f_1 g_1$}; \node at (1,-2.25) {$f_3 g_2$}; \node at (2,-2.25) {$f_4 g_3$}; \node at (3,-2.25) {$f_2 g_4$};
	\end{scope}
 \end{tikzpicture}
 \caption{A visualization of $(1~2~3)(f_1,f_2,f_3,f_4)(2~3~4)(g_1,g_2,g_3,g_4)=(1~2)(3~4)(f_1 g_1,f_3 g_2,f_4 g_3,f_2 g_4)$ in $A_4$.}
 \label{fig:wreath_mult}
\end{figure}
 
We now encode the family $(A_n)_{n\in\N}$ into a cloning system. For each $n\in\N$ define $\rho_n \colon A_n \to S_n$ via $\sigma(f_1,\dots,f_n) \mapsto \sigma$. For each $1\le k\le n$ define $\clone_k^n \colon A_n \to A_{n+d-1}$ via
$$\sigma(f_1,\dots,f_n) \mapsto (\sigma)\symmclone_k^n \rho^{(k)}(f_k)(f_1,\dots,f_{k-1},f_k^1,\dots,f_k^d,f_{k+1},\dots,f_n)\text{,}$$
where $f_k=\rho(f_k)(f_k^1,\dots,f_k^d)$ is the wreath recursion of $f_k$, and $\rho^{(k)}$ is the composition of $\rho \colon \Aut(\tree_d) \to S_d$ with the ($k$-dependent) monomorphism $S_d \to S_{n+d-1}$ induced by the injection $\{1,\dots,d\} \to \{1,\dots,n+d-1\}$ sending $j$ to $k+j-1$. Note that $\clone_k^n$ is injective, since $\symmclone_k^n$ is injective and $f_k$ is uniquely determined by its wreath recursion.

See Figure~\ref{fig:aaut_clone} for an example of how $\clone_k^n$ works.

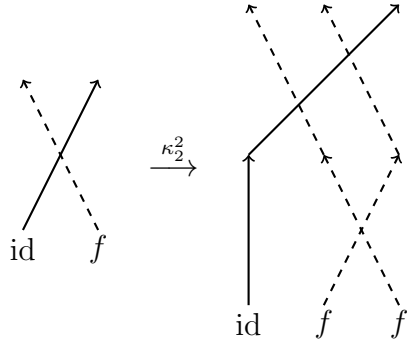
\begin{figure}[htb]
 \centering
 \begin{tikzpicture}[line width=0.8pt]
  \draw[->] (0,-2) -- (1,0); \draw[->,dashed] (1,-2) -- (0,0);
	\node at (0,-2.25) {$\id$}; \node at (1,-2.25) {$f$};
  \node at (2,-1) {$\stackrel{\clone_2^2}{\longrightarrow}$};
  \begin{scope}[xshift=3cm,yshift=1cm]
   \draw[->] (0,-2) -- (2,0); \draw (1,-2)[->,dashed] -- (0,0); \draw (2,-2)[->,dashed] -- (1,0);
	 \draw[->] (0,-4) -- (0,-2); \draw[->,dashed] (1,-4) -- (2,-2); \draw[->,dashed] (2,-4) -- (1,-2);
   \node at (0,-4.25) {$\id$}; \node at (1,-4.25) {$f$}; \node at (2,-4.25) {$f$};
  \end{scope}
 \end{tikzpicture}
 \caption{An example of $2$-ary cloning in the $A_n$. Here $f=(1~2)(f,f)\in\Aut(\tree_2)$ denotes the automorphism induced from the bijection $\{0,1\}\to\{0,1\}$ switching $0$ and $1$. The picture shows that $((1~2)(\id,f))\clone_2^2 = (1~3~2)(2~3)(\id,f,f)$. We use dashed lines to indicate the arrow getting cloned and the resulting arrows. To see where the $(1~3~2)(2~3)$ in the output comes from, note that $(1~2)\symmclone_2^2=(1~3~2)$ and $\rho^{(2)}(f)=(2~3)$.}
 \label{fig:aaut_clone}
\end{figure}

It is also helpful to observe that $\clone_1^1$ is nothing more than the usual wreath recursion. More precisely, $A_1=S_1\wr \Aut(\tree_d)=\Aut(\tree_d)$ and $(f)\clone_1^1=\rho(f)(f_1,\dots,f_d)$ for any $f\in A_1$.
 
\begin{proposition}\label{prop:aut_clone}
 $((A_n)_{n\in\N},(\rho_n)_{n\in\N},(\clone_k^n)_{k\le n})$ is a $d$-ary cloning system.
\end{proposition}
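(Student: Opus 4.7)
The plan is to verify the three axioms (C1), (C2), (C3) by direct computation, relying on two ingredients: the wreath-product formula for the product of two elements of $\Aut(\tree_d)$, and the fact (from Example~\ref{ex:symm_gps}) that $((S_n)_{n\in\N},(\id_{S_n})_n,(\symmclone_k^n)_{k\le n})$ is already a $d$-ary cloning system. The pictorial calculus in Figures~\ref{fig:wreath_mult} and~\ref{fig:aaut_clone} provides the intuition throughout: a cloning is the operation of attaching a $d$-caret at the foot of an arrow, and then pulling the caret up through the element.

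I would dispatch \textbf{(C3)} first. By definition $\rho_{n+d-1}((g)\clone_k^n) = (\sigma)\symmclone_k^n \cdot \rho^{(k)}(f_k)$, and $\rho^{(k)}(f_k)$ is supported on $\{k,\dots,k+d-1\}$. So for any $i$ outside that block it fixes $i$, giving $\rho_{n+d-1}((g)\clone_k^n)(i) = (\sigma)\symmclone_k^n(i) = (\rho_n(g))\symmclone_k^n(i)$, as required.

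The main work is \textbf{(C1)}. Write $g=\sigma(f_1,\dots,f_n)$ and $h=\tau(g_1,\dots,g_n)$, with wreath recursions $f_i=\rho(f_i)(f_i^1,\dots,f_i^d)$ and $g_i=\rho(g_i)(g_i^1,\dots,g_i^d)$. Then $gh=\sigma\tau(f_{\tau(1)}g_1,\dots,f_{\tau(n)}g_n)$, and the $k$th coordinate has wreath recursion
$$f_{\tau(k)}g_k=\rho(f_{\tau(k)})\rho(g_k)\bigl(f_{\tau(k)}^{\rho(g_k)(1)}g_k^1,\dots,f_{\tau(k)}^{\rho(g_k)(d)}g_k^d\bigr).$$
Applying the definition of $\clone_k^n$ to $gh$ yields a single element of $A_{n+d-1}$. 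On the other side, I would compute $(g)\clone_{\tau(k)}^n$ and $(h)\clone_k^n$ separately and multiply them in $A_{n+d-1}$, using the same visualization as in Figure~\ref{fig:wreath_mult}: the symmetric-group parts combine via (C1) for the $\symmclone$'s, i.e. $(\sigma\tau)\symmclone_k^n=(\sigma)\symmclone_{\tau(k)}^n\cdot(\tau)\symmclone_k^n$, while the group-homomorphism identity $\rho^{(k)}(f_{\tau(k)})\rho^{(k)}(g_k)=\rho^{(k)}(f_{\tau(k)}g_k)$ (as elements of $S_{n+d-1}$, after the two permutations have been aligned to the same caret position $k$ via $\symmclone$) assembles the permutation data. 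The tuple entries outside of $\{k,\dots,k+d-1\}$ are immediately seen to be $f_{\tau(i)}g_i$ for $i\ne k$, and the $d$ new entries in positions $k,\dots,k+d-1$ are exactly the $f_{\tau(k)}^{\rho(g_k)(j)}g_k^j$ above, which is where the bookkeeping is most delicate.

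Finally, \textbf{(C2)} is the easiest of the three substantive axioms, and I would prove it by observing that for $k<\ell$ the two compositions $\clone_\ell^n\circ\clone_k^{n+d-1}$ and $\clone_k^n\circ\clone_{\ell+d-1}^{n+d-1}$ attach $d$-carets at two disjoint foot positions; the symmetric-group parts agree by (C2) for the $\symmclone$'s, and the tuples of $\Aut(\tree_d)$-labels agree because the wreath recursions of $f_k$ and $f_\ell$ are expanded independently, regardless of the order. The main obstacle is the careful index bookkeeping in (C1); once one has convinced oneself via the picture that pulling a caret up through $gh$ is the same as pulling it through $h$ and then through $g$, the remaining verification is a routine but fiddly calculation.
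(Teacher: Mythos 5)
Your plan is correct and is essentially the paper's own proof: dispose of (C3) by the support of $\rho^{(k)}(f_k)$, verify (C1) by comparing the permutation and tuple parts of both sides using the wreath recursion of $f_{\tau(k)}g_k$ and (C1) for the $\symmclone_k^n$, and verify (C2) by the independence of the two caret insertions together with (C2) for the $\symmclone_k^n$. The one step you gloss over as ``aligning the two permutations to the same caret position'' is exactly the intertwining identity $(\tau)\symmclone_k^n\,\rho^{(k)}(f_{\tau(k)})=\rho^{(\tau(k))}(f_{\tau(k)})\,(\tau)\symmclone_k^n$, which the paper proves by an explicit case analysis and which is the only genuinely delicate computation in (C1).
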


\begin{proof}
 For (C1), consider $f=\sigma(f_1,\dots,f_n)$ and $g=\tau(g_1,\dots,g_n)$ in $A_n$ and let $1\le k\le n$. Multiplication in the wreath product $A_n = S_n \wr \Aut(\tree_d)$ gives us $fg=\sigma\tau(f_{\tau(1)} g_1,\dots,f_{\tau(n)} g_n)$. The $k$th entry of this is $f_{\tau(k)} g_k$, and looking at the definition of $\clone_k^n$ we see we will need to know the wreath recursion of $f_{\tau(k)} g_k$. If we write the wreath recursions $f_{\tau(k)}=\rho(f_{\tau(k)})(f_{\tau(k)}^1,\dots,f_{\tau(k)}^d)$ and $g_k=\rho(g_k)(g_k^1,\dots,g_k^d)$, then
$$f_{\tau(k)} g_k=\rho(f_{\tau(k)})\rho(g_k)(f_{\tau(k)}^{\rho(g_k)(1)} g_k^1,\dots,f_{\tau(k)}^{\rho(g_k)(d)} g_k^d)\text{.}$$
We can now apply $\clone_k^n$ to $fg$ to get that the left hand side of (C1) is
\begin{align*}
(fg)\clone_k^n =& (\sigma\tau)\symmclone_k^n \rho^{(k)}(f_{\tau(k)} g_k)\\
~& (f_{\tau(1)}g_1,\dots,f_{\tau(k-1)}g_{k-1},f_{\tau(k)}^{\rho(g_k)(1)} g_k^1,\dots,f_{\tau(k)}^{\rho(g_k)(d)} g_k^d,f_{\tau(k+1)}g_{k+1},\dots,f_{\tau(n)}g_n)\text{.}
\end{align*}
Call $(\sigma\tau)\symmclone_k^n \rho^{(k)}(f_{\tau(k)} g_k)$ the ``permutation part'' of the left hand side, denoted $P_\ell$, and call the remaining expression the ``tuple part'' of the left hand side, denoted $T_\ell$. Now we move to the right hand side of (C1), where we will also isolate a permutation part $P_r$ and tuple part $T_r$. The right hand side is the product $(f)\clone_{\tau(k)}^n (g)\clone_k^n$, which we can write down knowing the wreath recursions for $f$ and $g$. We get
\begin{align*}
(f)\clone_{\tau(k)}^n (g)\clone_k^n
=& (\sigma)\symmclone_{\tau(k)}^n \rho^{(\tau(k))}(f_{\tau(k)}) (f_1,\dots,f_{{\tau(k)}-1},f_{\tau(k)}^1,\dots,f_{\tau(k)}^d,f_{{\tau(k)}+1},\dots,f_n)\\
~& (\tau)\symmclone_k^n \rho^{(k)}(g_k) (g_1,\dots,g_{k-1},g_k^1,\dots,g_k^d,g_{k+1},\dots,g_n)\text{.}
\end{align*}
Multiplication in $A_{n+d-1} = S_{n+d-1} \wr \Aut(\tree_d)$ tells us that this equals
\begin{align*}
(\sigma)\symmclone_{\tau(k)}^n \rho^{(\tau(k))}(f_{\tau(k)}) (\tau)\symmclone_k^n \rho^{(k)}(g_k)\\
~& (f_1,\dots,f_{{\tau(k)}-1},f_{\tau(k)}^1,\dots,f_{\tau(k)}^d,f_{{\tau(k)}+1},\dots,f_n)^{(\tau)\symmclone_k^n \rho^{(k)}(g_k)}\\
~& (g_1,\dots,g_{k-1},g_k^1,\dots,g_k^d,g_{k+1},\dots,g_n)
\end{align*}
so now we have our $P_r$ and $T_r$. We need to prove that $P_\ell=P_r$ and $T_\ell=T_r$, and then we will be done verifying (C1). To prove that $P_\ell=P_r$ we need to show that
$$(\sigma\tau)\symmclone_k^n \rho^{(k)}(f_{\tau(k)} g_k) = (\sigma)\symmclone_{\tau(k)}^n \rho^{(\tau(k))}(f_{\tau(k)}) (\tau)\symmclone_k^n \rho^{(k)}(g_k)\text{.}$$
We can expand $P_\ell$ using the fact that (C1) holds for the $d$-ary cloning maps $\symmclone_k^n$ for the symmetric groups and the fact that $\rho^{(k)}$ is a homomorphism, and get
$$P_\ell = (\sigma)\symmclone_{\tau(k)}^n (\tau)\symmclone_k^n \rho^{(k)}(f_{\tau(k)}) \rho^{(k)}(g_k)\text{.}$$
It now suffices to show that $(\tau)\symmclone_k^n \rho^{(k)}(f_{\tau(k)}) = \rho^{(\tau(k))}(f_{\tau(k)}) (\tau)\symmclone_k^n$. For $1\le i \le n+d-1$, if $i\ne k,\dots,k+d-1$ then $\rho^{(k)}(f_{\tau(k)})(i)=i$, so $(\tau)\symmclone_k^n \rho^{(k)}(f_{\tau(k)})(i) = (\tau)\symmclone_k^n (i)$. For such $i$, we have $(\tau)\symmclone_k^n (i) \ne \tau(k),\dots,\tau(k)+d-1$, so $\rho^{(\tau(k))}(f_{\tau(k)})$ fixes $(\tau)\symmclone_k^n (i)$ and we conclude that both sides of the equality send $i$ to $(\tau)\symmclone_k^n (i)$. Now suppose $i\in \{k,\dots,k+d-1\}$. Let $\phi_k \colon \{1,\dots,d\}\to\{k,\dots,k+d-1\}$ be the bijection sending $j$ to $j+k-1$, so $\rho^{(k)}(f_{\tau(k)})(i) = \phi_k \rho(f_{\tau(k)}) \phi_k^{-1}(i)$. We have
\begin{align*}
(\tau)\symmclone_k^n \rho^{(k)}(f_{\tau(k)})(i) &= (\tau)\symmclone_k^n \phi_k \rho(f_{\tau(k)})\phi_k^{-1}(i)\\
&= (\tau)\symmclone_k^n \phi_k \rho(f_{\tau(k)})(i-k+1)\\
&= (\tau)\symmclone_k^n \Big(\rho(f_{\tau(k)})(i-k+1) +k-1\Big)\\
&= \tau(k)+\Big(\rho(f_{\tau(k)})(i-k+1) +k-1\Big)-k\\
&= \rho(f_{\tau(k)})(i-k+1) + \tau(k)-1\\
&= \phi_{\tau(k)} \rho(f_{\tau(k)})(i-k+1)\\
&= \phi_{\tau(k)} \rho(f_{\tau(k)}) \phi_{\tau(k)}^{-1}(\tau(k)+i-k)\\
&= \rho^{(\tau(k))}(f_{\tau(k)}) (\tau(k)+i-k)\\
&= \rho^{(\tau(k))}(f_{\tau(k)}) (\tau)\symmclone_k^n (i)
\end{align*}

To prove that $T_\ell=T_r$ it suffices (after deleting the common $g_i$ factors) to prove that
\begin{align*}
~&(f_{\tau(1)},\dots,f_{\tau(k-1)},f_{\tau(k)}^{\rho(g_k)(1)},\dots,f_{\tau(k)}^{\rho(g_k)(d)},f_{\tau(k+1)},\dots,f_{\tau(n)})\\
&=(f_1,\dots,f_{{\tau(k)}-1},f_{\tau(k)}^1,\dots,f_{\tau(k)}^d,f_{{\tau(k)}+1},\dots,f_n)^{(\tau)\symmclone_k^n \rho^{(k)}(g_k)}\text{.}
\end{align*}
The action of $(\tau)\symmclone_k^n \rho^{(k)}(g_k) \in S_{n+d-1}$ on $(f_1,\dots,f_{{\tau(k)}-1},f_{\tau(k)}^1,\dots,f_{\tau(k)}^d,f_{{\tau(k)}+1},\dots,f_n)$ is given by permuting the entries. More precisely, this $(n+d-1)$-tuple is a function from $\{1,\dots,n+d-1\}$ to $\Aut(\tree_d)$ and the action of $S_{n+d-1}$ is via precomposition. Since the action of $S_{n+d-1}$ on $\{1,\dots,n+d-1\}$ is a left action, its action on the tuples is a right action (as indicated by the notation). When we apply $(\tau)\symmclone_k^n$ to the tuple we get $(f_{\tau(1)},\dots,f_{\tau(k-1)},f_{\tau(k)}^1,\dots,f_{\tau(k)}^d,f_{\tau(k+1)},\dots,f_{\tau(n)})$. Then when we follow this up with $\rho^{(k)}(g_k)$ we get $(f_{\tau(1)},\dots,f_{\tau(k-1)},f_{\tau(k)}^{\rho(g_k)(1)},\dots,f_{\tau(k)}^{\rho(g_k)(d)},f_{\tau(k+1)},\dots,f_{\tau(n)})$ as desired. This concludes the verification of (C1). Also see Figure~\ref{fig:C1_example} for a helpful picture of (C1) in action.

For (C2) we need to show that $\clone_\ell^n \circ \clone_k^{n+d-1} = \clone_k^n \circ \clone_{\ell+d-1}^{n+d-1}$ whenever $k<\ell$. Applying the left side function to $f=\sigma(f_1,\dots,f_n)\in A_n$ we obtain
\begin{align*}
~& \Big((\sigma)\symmclone_\ell^n \rho^{(\ell)}(f_\ell)(f_1,\dots,f_{\ell-1},f_\ell^1,\dots,f_\ell^d,f_{\ell+1},\dots,f_n)\Big)\clone_k^{n+d-1}\\
=& \Big((\sigma)\symmclone_\ell^n \rho^{(\ell)}(f_\ell)\Big)\symmclone_k^{n+d-1} \rho^{(k)}(f_k)\\
~& (f_1,\dots,f_{k-1},f_k^1,\dots,f_k^d,f_{k+1},\dots,f_{\ell-1},f_\ell^1,\dots,f_\ell^d,f_{\ell+1},\dots,f_n)\text{.}
\end{align*}
Since $k<\ell$ we have the following five useful facts:
\begin{enumerate}[label={(\arabic*)}, ref={\arabic*}, leftmargin=*]
 \item $\rho^{(\ell)}(f_\ell)(k) = k$ \label{item:k_fixed}
 \item $\Big(\rho^{(\ell)}(f_\ell)\Big)\symmclone_k^{n+d-1} = \rho^{(\ell+d-1)}(f_\ell)$ \label{item:k_clones}
 \item $\rho^{(k)}(f_k)\rho^{(\ell+d-1)}(f_\ell) = \rho^{(\ell+d-1)}(f_\ell)\rho^{(k)}(f_k)$ \label{item:rho_comm}
 \item $\rho^{(k)}(f_k)=\Big(\rho^{(k)}(f_k)\Big)\symmclone_{\ell+d-1}^{n+d-1}$ \label{item:k_cloned}
 \item $\rho^{(k)}(f_k)(\ell+d-1) = \ell+d-1$ \label{item:k_fixes}\text{.}
\end{enumerate}
Using these, together with (C1) and (C2) for the $d$-ary cloning maps $\symmclone_k^n$ on the symmetric groups, which we will call (S1) and (S2) here, gives us
\begin{align*}
\Big((\sigma)\symmclone_\ell^n \rho^{(\ell)}(f_\ell)\Big)\symmclone_k^{n+d-1} \rho^{(k)}(f_k) &\stackrel{\text{(S1)},\eqref{item:k_fixed}}{=} \Big((\sigma)\symmclone_\ell^n\Big)\symmclone_k^{n+d-1} \Big(\rho^{(\ell)}(f_\ell)\Big)\symmclone_k^{n+d-1} \rho^{(k)}(f_k)\\
&\stackrel{\text{(S2)},\eqref{item:k_clones}}{=} \Big((\sigma)\symmclone_k^n\Big)\symmclone_{\ell+d-1}^{n+d-1} \rho^{(\ell+d-1)}(f_\ell) \rho^{(k)}(f_k)\\
&\stackrel{\eqref{item:rho_comm}}{=} \Big((\sigma)\symmclone_k^n\Big)\symmclone_{\ell+d-1}^{n+d-1} \rho^{(k)}(f_k) \rho^{(\ell+d-1)}(f_\ell)\\
&\stackrel{\eqref{item:k_cloned}}{=} \Big((\sigma)\symmclone_k^n\Big)\symmclone_{\ell+d-1}^{n+d-1} \Big(\rho^{(k)}(f_k)\Big)\symmclone_{\ell+d-1}^{n+d-1} \rho^{(\ell+d-1)}(f_\ell)\\
&\stackrel{\text{(S1)},\eqref{item:k_fixes}}{=} \Big((\sigma)\symmclone_k^n \rho^{(k)}(f_k)\Big)\symmclone_{\ell+d-1}^{n+d-1} \rho^{(\ell+d-1)}(f_\ell)\text{.}
\end{align*}
We conclude that indeed
\begin{align*}
(f)\clone_\ell^n \circ \clone_k^{n+d-1} =& \Big((\sigma)\symmclone_\ell^n \rho^{(\ell)}(f_\ell)\Big)\symmclone_k^{n+d-1} \rho^{(k)}(f_k)\\
~& (f_1,\dots,f_{k-1},f_k^1,\dots,f_k^d,f_{k+1},\dots,f_{\ell-1},f_\ell^1,\dots,f_\ell^d,f_{\ell+1},\dots,f_n)\\
=& \Big((\sigma)\symmclone_k^n \rho^{(k)}(f_k)\Big)\symmclone_{\ell+d-1}^{n+d-1} \rho^{(\ell+d-1)}(f_\ell)\\
~& (f_1,\dots,f_{k-1},f_k^1,\dots,f_k^d,f_{k+1},\dots,f_{\ell-1},f_\ell^1,\dots,f_\ell^d,f_{\ell+1},\dots,f_n)\\
=& (f)\clone_k^n \circ \clone_{\ell+d-1}^{n+d-1}\text{.}
\end{align*}

Finally we prove (C3). This holds almost trivially: For $f=\sigma(f_1,\dots,f_n)$ and $i\ne k,\dots,k+d-1$ we have
$$\rho_{n+d-1}((f)\clone_k^n)(i) = (\sigma)\symmclone_k^n \rho^{(k)}(f_k)(i) = (\sigma)\symmclone_k^n(i) = (\rho_n(f))\symmclone_k^n(i)$$
since the support of $\rho^{(k)}(f_k)$ is $\{k,\dots,k+d-1\}$.
\end{proof}

\begin{figure}[htb]
 \centering
 \begin{tikzpicture}[line width=0.8pt]
  \draw[->] (0,-2) -- (0,0); \draw[->,dashed] (1,-2) -- (2,0); \draw[->,dashed] (2,-2) -- (1,0);
	\node at (0,-2.25) {$f$}; \node at (1,-2.25) {$f$}; \node at (2,-2.25) {$f$};
  \begin{scope}[xshift=5cm,yshift=2cm]
   \draw[->] (2,-2) -- (0,0); \draw (0,-2)[->,dashed] -- (1,0); \draw (1,-2)[->,dashed] -- (2,0);
   \node at (0,-2.25) {$\id$}; \node at (1,-2.25) {$\id$}; \node at (2,-2.25) {$f$};
	 \begin{scope}[yshift=-2.5cm]
	  \draw[->] (0,-2) -- (2,0); \draw (1,-2)[->,dashed] -- (0,0); \draw (2,-2)[->,dashed] -- (1,0);
	  \draw[->] (0,-4) -- (0,-2); \draw[->,dashed] (1,-4) -- (2,-2); \draw[->,dashed] (2,-4) -- (1,-2);
    \node at (0,-4.25) {$\id$}; \node at (1,-4.25) {$f$}; \node at (2,-4.25) {$f$};
	 \end{scope}
  \end{scope}
 \end{tikzpicture}
 \caption{Returning to the example in Figure~\ref{fig:aaut_clone}, we will check that the ``cloning a product'' axiom (C1) holds when applying $\clone_2^2$ to the product $((1~2)(\id,f))((1~2)(\id,f))$. Note that $((1~2)(\id,f))((1~2)(\id,f))=(f,f)$. The left picture here is of the left side of (C1), which is $(f,f)\clone_2^2$, and the right picture is of the right side of (C1), which is $((1~2)(\id,f))\clone_1^2 ((1~2)(\id,f))\clone_2^2$. From the pictures, it is clear that these are indeed the same.}
 \label{fig:C1_example}
\end{figure}
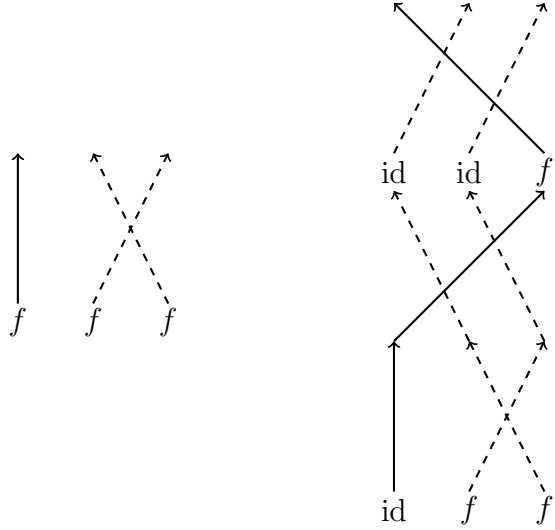

The $d$-ary cloning system on $(A_n)_{n\in\N}$ yields a Thompson group $\Thomp{A_*}$. Note that the Higman--Thompson group $V_d$ embeds into $\Thomp{A_*}$ as the subgroup of elements of the form $[T_-,\sigma\vv{\id},T_+]$. Also, for $T$ any finite rooted $d$-ary tree, say with $n$ leaves, $A_n$ embeds into $\Thomp{A_*}$ as the subgroup $A(T) \defeq \{[T,f,T]\mid f\in A_n\}$, by \cite[Observation~3.1]{witzel18} (or really its natural $d$-ary analog). For example $\Aut(\tree_d)$ itself embeds as $A(1_1)$ via $f\mapsto [1_1,f,1_1]$, where $1_1$ the trivial tree. We will sometimes equivocate between $\Aut(\tree_d)$ and $A(1_1)$ without comment.

We are almost ready to prove that $\Thomp{A_*}\cong \AAut(\tree_d)$ (Theorem~\ref{thrm:aaut_clone}), but first need a technical lemma. This lemma should also reveal to the reader where our definition of the cloning maps $\clone_k^n$ came from.

\begin{lemma}\label{lem:clone_once}
 Let $((T_-,\sigma(f_1,\dots,f_n),T_+)) \in \AAut(\tree_d)$. Choose $1\le k\le n$, and let $U_+$ be the finite rooted $d$-ary tree obtained by adding a $d$-caret to the $k$th leaf of $T_+$, so $U_+$ has $n+d-1$ leaves. Then $((T_-,\sigma(f_1,\dots,f_n),T_+)) = ((U_-,\tau(g_1,\dots,g_{n+d-1}),U_+))$ if and only if $U_-$ equals $T_-$ with a $d$-caret added to its $\sigma(k)$th leaf and $\tau(g_1,\dots,g_{n+d-1}) = (\sigma(f_1,\dots,f_n))\clone_k^n$.
\end{lemma}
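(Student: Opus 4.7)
The plan is to reduce both directions of the iff to a single comparison of bijections between forests, using Citation~\ref{cit:same_aaut} as the main tool. Since $T_+\subseteq U_+$, applying Citation~\ref{cit:same_aaut} with $V_+=U_+$ shows that a triple $(U_-,\tau(g_1,\dots,g_{n+d-1}),U_+)$ represents the same almost-automorphism as $(T_-,\sigma(f_1,\dots,f_n),T_+)$ if and only if $T_-\subseteq U_-$ and $\tau(g_1,\dots,g_{n+d-1})$ induces the same bijection $U_+^c\to U_-^c$ as the restriction of $\sigma(f_1,\dots,f_n)$. So the entire lemma collapses to analyzing this restricted bijection.

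Adding a $d$-caret at the $k$th leaf of $T_+$ subdivides the $k$th tree of $T_+^c$ into its $d$ first-level subtrees. Under $\sigma(f_1,\dots,f_n)$, the $k$th tree of $T_+^c$ is sent to the $\sigma(k)$th tree of $T_-^c$ via the state $f_k=\rho(f_k)(f_k^1,\dots,f_k^d)$. Unpacking the wreath recursion, the $j$th first-level subtree on the source side is mapped, via $f_k^j$, to the $\rho(f_k)(j)$th first-level subtree on the target side. Consequently, for $\sigma(f_1,\dots,f_n)$ to restrict to a bijection of forests $U_+^c\to U_-^c$ it is forced that $U_-$ equals $T_-$ with a $d$-caret added at its $\sigma(k)$th leaf, that the $(k+j-1)$th state is $f_k^j$ for $1\le j\le d$, and that the permutation sends $k+j-1$ to $\sigma(k)+\rho(f_k)(j)-1$ while agreeing with $\sigma$ on indices outside $\{k,\dots,k+d-1\}$.

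It then remains to identify this forced data with $(\sigma(f_1,\dots,f_n))\clone_k^n$. The tuple part is immediate from the formula defining $\clone_k^n$. For the permutation part, I would verify directly from the case definition in Example~\ref{ex:symm_gps} that $(\sigma)\symmclone_k^n$ sends $k+j-1$ to $\sigma(k)+j-1$ for $1\le j\le d$, so that $(\sigma)\symmclone_k^n\,\rho^{(k)}(f_k)$, read as standard function composition (applying $\rho^{(k)}(f_k)$ first), indeed sends $k+j-1$ to $\sigma(k)+\rho(f_k)(j)-1$. The main obstacle is less mathematical content than careful bookkeeping of conventions: the left/right action of $S_n$ on tuples, the ordering in the wreath recursion of $f_k$, and the composition direction for $(\sigma)\symmclone_k^n\,\rho^{(k)}(f_k)$ must all align so that the cloning formula literally records the refinement of the bijection $T_+^c\to T_-^c$ by one $d$-caret; once these are pinned down, the lemma amounts to unpacking definitions.
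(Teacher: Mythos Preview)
Your approach is essentially the paper's: compute the restriction of $\sigma(f_1,\dots,f_n)$ to $U_+^c$ and identify it with the cloning formula. The one place you are slightly loose is the claim that Citation~\ref{cit:same_aaut} can be invoked directly ``with $V_+=U_+$'' to obtain the full biconditional. The citation only asserts the existence of \emph{some} common refinement $V_\pm$, so in the ``only if'' direction it hands you agreement on some $V_+^c\subseteq U_+^c$, not on $U_+^c$ itself. You still need the short extra step that the paper makes explicit: the composition of one map with the inverse of the other is an element of $S_{n+d-1}\wr\Aut(\tree_d)$ acting on $U_+^c$ whose restriction to the cofinite subforest $V_+^c$ is the identity, hence it is the identity on all of $U_+^c$. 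Once you add this sentence, your argument and the paper's coincide; your bookkeeping on the permutation part (checking that $(\sigma)\symmclone_k^n\rho^{(k)}(f_k)$ sends $k+j-1$ to $\sigma(k)+\rho(f_k)(j)-1$) is correct and in fact slightly more explicit than the paper's.
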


\begin{proof}
 First suppose that $U_-$ equals $T_-$ with a $d$-caret added to its $\sigma(k)$th leaf and that $\tau(g_1,\dots,g_{n+d-1}) = (\sigma(f_1,\dots,f_n))\clone_k^n$, and we want to show that
$$((T_-,\sigma(f_1,\dots,f_n),T_+)) = ((U_-,(\sigma(f_1,\dots,f_n))\clone_k^n,U_+))\text{.}$$
By Citation~\ref{cit:same_aaut} applied to this special case, it suffices to show that when we restrict $\sigma(f_1,\dots,f_n)$ to $U_+^c$ it maps bijectively to $U_-^c$ and coincides with $(\sigma(f_1,\dots,f_n))\clone_k^n$. That it restricts to a bijection $U_+^c \to U_-^c$ is just because $\sigma(f_1,\dots,f_n)$ takes the $k$th tree in $T_+^c$ to the $\sigma(k)$th tree in $T_-^c$. Now we need to understand what this restriction looks like. Applying $\sigma(f_1,\dots,f_n)$ to $T_+^c$ amounts to first applying $f_i$ to the $i$th tree, for each $1\le i\le n$, and then applying $\sigma$ to shuffle the trees. When we remove the $d$-caret on the $k$th root from $T_+^c$ to obtain $U_+^c$, this becomes: first apply $f_i$ to the $i$th tree, for each $1\le i<k$ and $k+d-1<i\le n$, then apply $f_k^i$ to the $(i+k-1)$st tree for each $1\le i\le d$, where $f_k=\rho(f_k)(f_k^1,\dots,f_k^d)$ is the wreath recursion of $f_k$, and then shuffle the trees, first by shuffling trees $k$ through $k+d-1$ according to $\rho(f_k)$ and then by shuffling all the trees by $(\sigma)\symmclone_k^n$. But this exactly describes $(\sigma(f_1,\dots,f_n))\clone_k^n$.

 Now suppose $((T_-,\sigma(f_1,\dots,f_n),T_+)) = ((U_-,\tau(g_1,\dots,g_{n+d-1}),U_+))$. Let $U_-^?$ be $T_-$ with a $d$-caret added to its $\sigma(k)$th leaf, so by the previous paragraph we know
$$((T_-,\sigma(f_1,\dots,f_n),T_+))=((U_-^?,(\sigma(f_1,\dots,f_n))\clone_k^n,U_+))$$
and hence
$$((U_-,\tau(g_1,\dots,g_{n+d-1}),U_+))=((U_-^?,(\sigma(f_1,\dots,f_n))\clone_k^n,U_+))\text{.}$$
We need to show that $U_-=U_-^?$ and $\tau(g_1,\dots,g_{n+d-1})=(\sigma(f_1,\dots,f_n))\clone_k^n$. Choose a finite $V_+\supseteq U_+$ such that the restrictions of $\tau(g_1,\dots,g_{n+d-1})$ and $(\sigma(f_1,\dots,f_n))\clone_k^n$ to $V_+^c$ coincide, so composing one with the inverse of the other is the identity on $V_+^c$, hence is necessarily the identity on $U_+^c$, so indeed $\tau(g_1,\dots,g_{n+d-1})=(\sigma(f_1,\dots,f_n))\clone_k^n$. Then it is immediate that $U_-=U_-^?$ since the former is the complement of the image of $U_+^c$ under $\tau(g_1,\dots,g_{n+d-1})$ and the latter is the complement of the image of $U_+^c$ under $(\sigma(f_1,\dots,f_n))\clone_k^n$.
\end{proof}

We can now prove our main result connecting $d$-ary cloning systems and almost-automorphisms.

\begin{theorem}[$\AAut(\tree_d)$ from cloning]\label{thrm:aaut_clone}
 We have $\Thomp{A_*}\cong \AAut(\tree_d)$.
\end{theorem}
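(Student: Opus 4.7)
The plan is to define $\Phi \colon \Thomp{A_*} \to \AAut(\tree_d)$ by $\Phi([T_-, f, T_+]) = ((T_-, f, T_+))$, and show that this is a well-defined group isomorphism. Well-definedness is exactly the content of Lemma~\ref{lem:clone_once}: a single expansion replaces $(T_-, f, T_+)$ by $(U_-, (f)\clone_k^n, U_+)$, and the lemma certifies that these two triples represent the same almost-automorphism. Since the equivalence relation on $\Thomp{A_*}$ is generated by expansions and reductions, $\Phi$ descends to equivalence classes, and surjectivity is immediate from the definition of $\AAut(\tree_d)$.

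For the homomorphism property, given two classes, one computes the product in $\Thomp{A_*}$ by expanding both representatives until they reach a common middle tree $V$, producing representatives $(T_-', f', V)$ and $(V, g', U_+')$ with $f', g' \in A_m$ where $m$ is the number of leaves of $V$, and then declares the product to be $[T_-', f' g', U_+']$. Composition in $\AAut(\tree_d)$ is realized identically: by well-definedness we may pass to the same expanded representatives, after which the restrictions of $f'$ and $g'$ are honest forest bijections whose composition is precisely the wreath multiplication $f' g' \in A_m$ pictured in Figure~\ref{fig:wreath_mult}. So $\Phi$ is a homomorphism.

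The main obstacle is injectivity. Suppose $\Phi([T_-, f, T_+])$ is trivial, so $((T_-, f, T_+)) = ((T, \id\vv{\id}, T))$ for some tree $T$. By Citation~\ref{cit:same_aaut} there exist $V_-, V_+$ containing the relevant trees such that $f$ and $\id\vv{\id}$ restrict to equal bijections $V_+^c \to V_-^c$; since the restriction of $\id\vv{\id}$ to any larger forest complement is the identity map, this equality forces $V_- = V_+ = V$. Iterated applications of Lemma~\ref{lem:clone_once} then identify $[T_-, f, T_+] = [V, f'', V]$, where $f''$ is the iterated $d$-ary clone of $f$ that corresponds to expanding $T_+$ up to $V$, and similarly identify $[T, \id\vv{\id}, T] = [V, \id\vv{\id}, V]$ (using that cloning the identity gives the identity). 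By the cloning rule illustrated in Figure~\ref{fig:aaut_clone}, this $f''$ is exactly the forest bijection of $V^c$ induced by $f$, which equals $\id\vv{\id}$ by the previous sentence. Hence the two Thompson-group classes coincide, so $[T_-, f, T_+]$ is trivial, completing the proof.
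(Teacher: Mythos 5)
Your proposal is correct and follows essentially the same route as the paper: well-definedness via Lemma~\ref{lem:clone_once}, the homomorphism property via passing to a common middle tree, and the remaining direction via Citation~\ref{cit:same_aaut} plus iterated applications of Lemma~\ref{lem:clone_once}. The only cosmetic difference is that you establish injectivity by checking the kernel is trivial, whereas the paper proves the inverse map $\Psi$ is well defined in general; the ingredients and the argument are the same.
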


\begin{proof}
 Define $\Phi \colon \Thomp{A_*} \to \AAut(\tree_d)$ via $[T_-,f,T_+]\mapsto ((T_-,f,T_+))$ and $\Psi \colon \AAut(\tree_d) \to \Thomp{A_*}$ via $((T_-,f,T_+))\mapsto [T_-,f,T_+]$. We need to prove that both of these are well defined homomorphisms, and then since they are obviously inverses they will be isomorphisms.

 First we claim $\Phi$ is well defined. Suppose $[T_-,f,T_+]=[U_-,g,U_+]$ and we need to show that $((T_-,f,T_+))=((U_-,g,U_+))$. Since we know we can get from $(T_-,f,T_+)$ to $(U_-,g,U_+)$ via a sequence of reductions and expansions, it suffices to prove that $((T_-,f,T_+))=((U_-,g,U_+))$ holds whenever $(U_-,g,U_+)$ is an expansion of $(T_-,f,T_+)$. Say $U_+$ is $T_+$ with a $d$-caret added to the $k$th leaf, so $g=(f)\clone_k^n$ and $U_-$ is $T_-$ with a $d$-caret added to the $\sigma(k)$th leaf (here $f=\sigma(f_1,\dots,f_n)$). Now Lemma~\ref{lem:clone_once} says $((T_-,f,T_+))=((U_-,g,U_+))$.

 Next we claim $\Psi$ is well defined. Suppose $((T_-,f,T_+))=((U_-,g,U_+))$ and we need to show that $[T_-,f,T_+]=[U_-,g,U_+]$. First note that if $U_+$ is $T_+$ with a $d$-caret added to the $k$th leaf, then Lemma~\ref{lem:clone_once} says that $U_-$ is $T_-$ with a $d$-caret added to the $\sigma(k)$th leaf (where again $f=\sigma(f_1,\dots,f_n)$) and $g=(f)\clone_k^n$. In particular $[T_-,f,T_+]=[U_-,g,U_+]$. Now, returning to the general case, Citation~\ref{cit:same_aaut} says there exist finite rooted complete subtrees $V_-$ and $V_+$ of $\tree_d$, with $T_-,U_-\subseteq V_-$ and $T_+,U_+\subseteq V_+$, such that $f$ and $g$ restrict to bijections $V_+^c \to V_-^c$ and these bijections coincide. In particular $((T_-,f,T_+))=((V_-,h,V_+))=((U_-,g,U_+))$ where $h$ is the common restriction of $f$ and $g$ to $V_+^c \to V_-^c$. We can build up from $T_+$ to $V_+$ adding a $d$-caret at a time, and the previous discussion then gives us $[T_-,f,T_+]=[V_-,h,V_+]$. Then a parallel argument shows $[V_-,h,V_+]=[U_-,g,U_+]$, so $[T_-,f,T_+]=[U_-,g,U_+]$ as desired.

 Lastly we claim $\Phi$ is a homomorphism (and then $\Psi=\Phi^{-1}$ is too, for trivial reasons). Let $[T_-,f,T_+],[U_-,g,U_+]\in \Thomp{A_*}$. After possibly expanding representative triples, we can assume $T_+=U_-$. In particular $[T_-,f,T_+][U_-,g,U_+]=[T_-,fg,U_+]$. Since $((T_-,f,T_+))((U_-,g,U_+))=((T_-,fg,U_+))$ is obviously true whenever $T_+=U_-$, we have
\begin{align*}
 \Phi([T_-,f,T_+][U_-,g,U_+])&=\Phi([T_-,fg,U_+])=((T_-,fg,U_+))\\
 &=((T_-,f,T_+))((U_-,g,U_+))=\Phi([T_-,f,T_+])\Phi([U_-,g,U_+])\text{.}
\end{align*}
\end{proof}

Now let $G\le \Aut(\tree_d)$ be self-similar. Since $G$ is self-similar we have $\clone_k^n(S_n\wr G) \subseteq S_{n+d-1}\wr G$. Hence our cloning system on $(A_n)_{n\in\N}$ restricts to a cloning subsystem on $(S_n\wr G)_{n\in\N}$, and we get a Thompson-like group $\Thomp{S_*\wr G}$.

\begin{corollary}[R\"over--Nekrashevych from cloning]\label{cor:nek_clone}
 For any self-similar $G\le\Aut(\tree_d)$, we have $\Thomp{S_*\wr G}\cong V_d(G)$.
\end{corollary}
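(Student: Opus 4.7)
The plan is to deduce this as a straightforward restriction of Theorem~\ref{thrm:aaut_clone}. Having already constructed the isomorphism $\Phi \colon \Thomp{A_*} \to \AAut(\tree_d)$ sending $[T_-,f,T_+] \mapsto ((T_-,f,T_+))$, the goal reduces to showing that the restriction of $\Phi$ to $\Thomp{S_* \wr G}$ lands in $V_d(G)$ and is a bijection onto it. So the first order of business is to justify that $\Thomp{S_* \wr G}$ makes sense as a Thompson-like subgroup of $\Thomp{A_*}$ at all; that is, that $(S_n \wr G)_{n\in\N}$ is a cloning subsystem of $(A_n)_{n\in\N}$ in the sense of Subsection~\ref{sec:gen_cloning}.

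Verifying the subsystem condition is exactly where self-similarity of $G$ enters. Take $f = \sigma(f_1,\dots,f_n) \in S_n \wr G$ and inspect the formula
\[
(f)\clone_k^n = (\sigma)\symmclone_k^n\,\rho^{(k)}(f_k)\,(f_1,\dots,f_{k-1},f_k^1,\dots,f_k^d,f_{k+1},\dots,f_n)\text{.}
\]
The entries $f_i$ for $i\neq k$ already lie in $G$, and the entries $f_k^1,\dots,f_k^d$ are the states of $f_k\in G$ at the first level, which lie in $G$ precisely because $G$ is self-similar. Hence $(f)\clone_k^n \in S_{n+d-1}\wr G$, so $(S_n\wr G)_{n\in\N}$ is indeed a $d$-ary cloning subsystem of $(A_n)_{n\in\N}$, and we get a well-defined subgroup $\Thomp{S_*\wr G}\le \Thomp{A_*}$.

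Next I would define $\Phi' \colon \Thomp{S_*\wr G} \to V_d(G)$ to be the restriction of $\Phi$. It is well defined and injective for free, since $\Phi$ is an isomorphism and the equivalence classes in $\Thomp{S_*\wr G}$ are precisely the classes in $\Thomp{A_*}$ restricted to the subsystem (any expansion of a triple in the subsystem remains in the subsystem by the computation above). That the image of $\Phi'$ lies in $V_d(G)$ is immediate from the definition of $V_d(G)$: the representative triple $(T_-,f,T_+)$ has $f \in S_n\wr G$, so $((T_-,f,T_+))$ is exactly of the form $((T_-,\sigma(g_1,\dots,g_n),T_+))$ with $g_i \in G$. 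For surjectivity, any element of $V_d(G)$ has by definition some representative of this form, which is in the image.

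There is no real obstacle here beyond the subsystem check; the content of the corollary is that, once Theorem~\ref{thrm:aaut_clone} is known, the natural subgroup $V_d(G) \le \AAut(\tree_d)$ corresponds under $\Phi$ to precisely the subgroup arising from the self-similar restriction of the cloning system, and self-similarity of $G$ is exactly the hypothesis that makes this restriction legal.
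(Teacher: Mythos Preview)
Your proposal is correct and follows essentially the same approach as the paper: the paper verifies just before the corollary that self-similarity of $G$ makes $(S_n\wr G)_{n\in\N}$ a cloning subsystem, and then the proof restricts the isomorphism $\Phi$ from Theorem~\ref{thrm:aaut_clone} to $\Thomp{S_*\wr G}$, observing that it maps into $V_d(G)$ and that surjectivity follows by the parallel argument via $\Psi=\Phi^{-1}$.
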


\begin{proof}
 Let $\Phi \colon \Thomp{A_*} \to \AAut(\tree_d)$ be the isomorphism $[T_-,f,T_+] \mapsto ((T_-,f,T_+))$ from the proof of Theorem~\ref{thrm:aaut_clone}. We have that $[T_-,f,T_+]\in \Thomp{S_*\wr G}$ if and only if we can choose $f\in S_n\wr G$ (maybe after performing some expansions). Then $\Phi([T_-,f,T_+])=((T_-,f,T_+)) \in V_d(G)$. Hence $\Phi$ takes $\Thomp{S_*\wr G}$ into $V_d(G)$, and it is onto by a parallel argument involving $\Psi=\Phi^{-1}$.
\end{proof}


\subsection{Strand diagrams}\label{sec:strand}

There is a convenient way to visualize elements of $\Thomp{A_*}$ via versions of strand diagrams, like for elements of the classical and braided Thompson groups (see, e.g., \cite{brady08}). Let $[T_-,\sigma(f_1,\dots,f_n),T_+]\in\Thomp{A_*}$, so $T_-$ and $T_+$ have $n$ leaves. Draw $T_-$ right side up, i.e., with its root at the top, and $T_+$ below it and upside down, i.e., with its root at the bottom. In between the leaves of $T_-$ and $T_+$ draw $\sigma(f_1,\dots,f_n)$, like in Figure~\ref{fig:A_n_element}, connecting arrows to leaves as appropriate. We get a \emph{strand diagram} for $[T_-,\sigma(f_1,\dots,f_n),T_+]$. The top (range) tree, $T_-$ is a sequence of \emph{splits} and the bottom (domain) tree is a sequence of \emph{merges}, with $\sigma(f_1,\dots,f_n)$ in the middle. Two such pictures represent the same element of $\Thomp{A_*}$ if we can get from one to the other via a sequence of moves analogous to reductions and expansions. For example the expansion move $[T_-,\sigma(f_1,\dots,f_n),T_+]=[T_-',(\sigma(f_1,\dots,f_n))\clone_k^n,T_+']$ can be visualized by introducing a split-merge at the $k$th leaf of $T_+$ and pulling the split up through $\sigma(f_1,\dots,f_n)$. The split lands at the $\sigma(k)$th leaf of $T_-$ and $\sigma(f_1,\dots,f_n)$ gets cloned to become $(\sigma(f_1,\dots,f_n))\clone_k^n$. See Figure~\ref{fig:TA_element}.

\begin{figure}[htb]
 \centering
 \begin{tikzpicture}[line width=0.8pt]
  \draw (0,0) -- (0.5,0.5) -- (1,0);
	\draw[->] (0,-1) -- (1,0); \draw[->,dashed] (1,-1) -- (0,0);
	\node at (0,-1.25) {$\id$}; \node at (1,-1.25) {$f$};
	\draw (0,-1.5) -- (0.5,-2) -- (1,-1.5);
	\node at (-0.5,0.25) {$T_-$}; \node at (-0.5,-1.75) {$T_+$};
	\begin{scope}[xshift=3cm]
	 \draw (0.5,0.5) -- (1,1) -- (2,0); \draw[dashed] (0,0) -- (0.5,0.5) -- (1,0);
	 \draw[->] (0,-1) -- (2,0); \draw[->,dashed] (1,-1) -- (0,0); \draw[->,dashed] (2,-1) -- (1,0);
	 \draw[->] (0,-2) -- (0,-1); \draw[->,dashed] (1,-2) -- (2,-1); \draw[->,dashed] (2,-2) -- (1,-1);
	 \node at (0,-2.25) {$\id$}; \node at (1,-2.25) {$f$}; \node at (2,-2.25) {$f$};
	 \draw (0,-2.5) -- (1,-3.5) -- (1.5,-3); \draw[dashed] (1,-2.5) -- (1.5,-3) -- (2,-2.5);
	\end{scope}
 \end{tikzpicture}
 \caption{The left picture represents an element $[T_-,\sigma(f_1,f_2),T_+]$ of $\protect\Thomp{S_*\wr \Aut(\tree_d)}$. Here $d=2$, $T_-$ and $T_+$ are each a single $2$-caret, $\sigma=(1~2)$, and $(f_1,f_2)=(\id,f)$ where $f=(1~2)(f,f)$ is as in Figure~\ref{fig:aaut_clone}. The right picture is an expansion of the left picture using $k=2$. The arrows relevant to the cloning are dashed.}
 \label{fig:TA_element}
\end{figure}
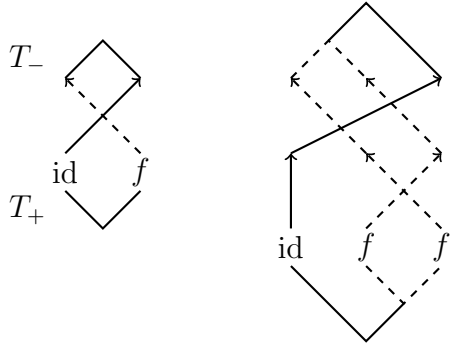

Multiplication in $\Thomp{A_*}$ is mirrored by stacking strand diagrams, with the convention that left-to-right corresponds to top-to-bottom; see Figure~\ref{fig:strand_mult} for an example.

\begin{figure}[htb]
 \centering
 \begin{tikzpicture}[line width=0.8pt]
  \draw (0,0) -- (0.5,0.5) -- (1,0);
	\draw[->] (0,-1) -- (1,0); \draw[->] (1,-1) -- (0,0);
	\node at (0,-1.25) {$\id$}; \node at (1,-1.25) {$f$};
	\draw (0,-1.5) -- (0.5,-2) -- (1,-1.5)   (0.5,-2) -- (0.5,-2.25);
	\node at (-0.5,0.25) {$T_-$}; \node at (-0.5,-1.75) {$T_+$};
  \draw (-0.5,-3.25) -- (0.5,-2.25) -- (1.5,-3.25)   (0.5,-3.25) -- (1,-2.75);
	\node at (-0.5,-3.5) {$f$}; \node at (0.5,-3.5) {$f$}; \node at (1.5,-3.5) {$\id$};
	\draw (-0.5,-3.75) -- (0.5,-4.75) -- (1.5,-3.75)   (0.5,-3.75) -- (0,-4.25);
	\node at (-0.5,-2.5) {$U_-$}; \node at (-0.5,-4.5) {$U_+$};
	\node at (2,-2.25) {$=$};
	
	\begin{scope}[xshift=4cm,yshift=-1cm]
	 \draw (0,0) -- (0.5,0.5) -- (1,0);
	 \draw[->] (0,-1) -- (1,0); \draw[->] (1,-1) -- (0,0);
	 \node at (0,-1.25) {$\id$}; \node at (1,-1.25) {$f$};
	 \draw (0,-1.5) -- (-0.5,-2)   (0.5,-2) -- (1,-1.5) -- (1.5,-2);
	 \node at (-0.5,-2.25) {$f$}; \node at (0.5,-2.25) {$f$}; \node at (1.5,-2.25) {$\id$};
	 \draw (-0.5,-2.5) -- (0.5,-3.5) -- (1.5,-2.5)   (0.5,-2.5) -- (0,-3);
	 \node at (2.25,-1.25) {$=$};
	\end{scope}
	
	\begin{scope}[xshift=8cm,yshift=-1cm]
	 \draw (-0.5,0) -- (0.5,1) -- (1.5,0)   (0,0.5) -- (0.5,0);
	 \draw[->] (-0.5,-2) -- (1.5,0); \draw[->] (1.5,-2) -- (-0.5,0); \draw[->] (1.5,-1) -- (0.5,0);
	 \draw[->] (0.5,-2) -- (1.5,-1);
	 \node at (-0.5,-2.25) {$f$}; \node at (0.5,-2.25) {$f^2$}; \node at (1.5,-2.25) {$f$};
	 \draw (-0.5,-2.5) -- (0.5,-3.5) -- (1.5,-2.5)   (0.5,-2.5) -- (0,-3);
	 \node at (2.25,-1.25) {$=$};
	\end{scope}
	
	\begin{scope}[xshift=12cm,yshift=-1.5cm]
	 \draw (-0.5,0) -- (0.5,1) -- (1.5,0)   (0,0.5) -- (0.5,0);
	 \draw[->] (-0.5,-1) -- (1.5,0); \draw[->] (0.5,-1) -- (0.5,0); \draw[->] (1.5,-1) -- (-0.5,0);
	 \node at (-0.5,-1.25) {$f$}; \node at (0.5,-1.25) {$\id$}; \node at (1.5,-1.25) {$f$};
	 \draw (-0.5,-1.5) -- (0.5,-2.5) -- (1.5,-1.5)   (0.5,-1.5) -- (0,-2);
	 \node at (1.75,0.5) {$V_-$}; \node at (1.75,-2) {$V_+$};
	\end{scope}
 \end{tikzpicture}
 \caption{The multiplication $[T_-,(1~2)(\id,f),T_+][U_-,(f,f,\id),U_+]=[V_-,(1~3)(f,\id,f),V_+]$ in $\protect\Thomp{S_*\wr \Aut(\tree_d)}$, where the trees are all as drawn. Here $f=(1~2)(f,f)$ is as in Figure~\ref{fig:TA_element}, so in particular $f^2=\id$.}
 \label{fig:strand_mult}
\end{figure}
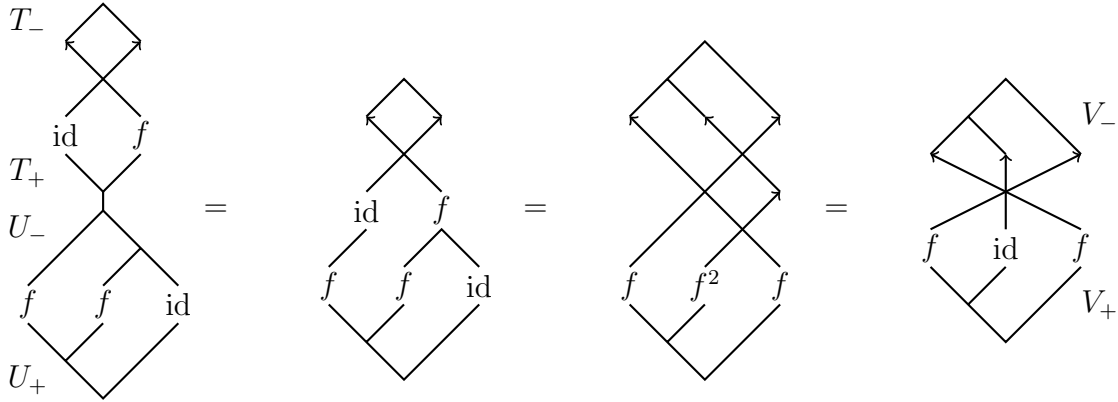

We will not make strand diagrams overly formal here, since we are just using them as a helpful visualization, but a formal theory could be worked out following the classical and braided versions; see \cite{belk14,brady08,witzel18}.

These strand diagrams also have an interpretation when viewing such elements as almost-automorphisms of $\tree_d$. Roughly, view $\tree_d$ with its root at the top and stack $[T_-,f,T_+]$ on top of it. The domain tree $T_+$ (of merges) cancels away the copy of $T_+$ in $\tree_d$, then $f$ acts on the resulting forest of $n$ subtrees, and finally the range tree $T_-$ (of splits) closes the forest back up into $\tree_d$.

\begin{remark}\label{rmk:G_F_infty}
 Now that we know $V_d(G)\cong \Thomp{S_*\wr G}$, one could develop a $d$-ary analog of \cite[Proposition~5.9]{witzel18} to quickly prove that if $G$ is of type $\F_\infty$ then so is $V_d(G)$. Rather than devote time here to developing the $d$-ary analog of this specific machinery (which would be worth doing in the future), we will prove our results for more general $G$ in the coming sections, with this result as a trivial consequence (Corollary~\ref{cor:Finfty_inherited}).
\end{remark}


\section{Morse theory and a first complex}\label{sec:large_cpx}

For the rest of the paper, let $G\le\Aut(\tree_d)$ be self-similar and fix a subgroup $H\le G$. In this section we recall some background on finiteness properties and discrete Morse theory, and then construct a simplicial complex $|P_H^1|$ on which $V_d(G)$ acts with stabilizers closely related to $H$. After imposing some conditions on $H$, the action of $V_d(G)$ on $|P_H^1|$ will be particularly nice. In Section~\ref{sec:stein} we will retract $|P_H^1|$ down to an even nicer complex, the so-called $H$-Stein--Farley complex $\Stein{S_*\wr G}_H$, and impose still more conditions on $H$, which will allow us to prove $V_d(G)$ is of type $\F_\infty$ in certain cases. All of this is heavily inspired by Belk and Matucci's work in \cite{belk16} proving that the R\"over group is of type $\F_\infty$.

\subsection{Morse theory}\label{sec:morse}

A typical tool for proving that a group is type $\F_\infty$ is to couple Brown's Finiteness Criterion \cite{brown87} with Bestvina--Brady Morse theory \cite{bestvina97}. Before stating the result (Lemma~\ref{lem:morse}) we need some definitions. Let $X$ be an affine cell complex as in \cite{bestvina97}, for example a simplicial complex or a cube complex. Let $h\colon X \to \R$ be a map that restricts to a non-constant affine map on each positive dimensional cell of $X$, and such that $h(X^{(0)})$ is discrete in $\R$. We call $h$ a \emph{Morse function}. For each $x\in X^{(0)}$ define the \emph{descending link} $\dlk x$ of $x$ to be the subcomplex of $\lk x$ spanned by those vertices $y\in \lk x$ such that $h(y)< h(x)$. For each $q\in\R$ define the \emph{sublevel set} $X^{h\le q}$ to be the subcomplex of $X$ consisting of those cells contained in $h^{-1}((-\infty,q])$.

We can now state our combination of Brown's Criterion and the Morse Lemma. It is almost identical to \cite[Theorem~5.7]{belk16} except we do not insist on stabilizers being finite, merely of type $\F_\infty$. Since the assumption on stabilizers is just to ensure we can apply Brown's Criterion, and since Brown's Criterion goes through even with $\F_\infty$ stabilizers, this lemma is immediate.

\begin{lemma}\label{lem:morse}
 Let $\Gamma$ be a group acting cellularly on a contractible affine cell complex $X$. Let $h\colon X\to \R$ be a $\Gamma$-equivariant Morse function. Then $\Gamma$ is of type $\F_\infty$ provided that the following conditions are satisfied:
 \begin{enumerate}
  \item For each $q\in\R$ the sublevel set $X^{h\le q}$ is $\Gamma$-cocompact.
	\item Each cell stabilizer is of type $\F_\infty$.
	\item For each $n\in\N$ there exists $t\in\R$ such that for all $x\in X^{(0)}$ with $h(x)\ge t$ we have that $\dlk x$ is $n$-connected.
 \end{enumerate}
\end{lemma}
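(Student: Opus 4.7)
My plan is to combine Brown's Finiteness Criterion with the Bestvina--Brady Morse lemma from \cite{bestvina97}, following the same template as in the proof of \cite[Theorem~5.7]{belk16}. First I would fix an unbounded increasing sequence $q_1 < q_2 < \cdots$ of real numbers and form the filtration $X_n \defeq X^{h \le q_n}$. By $\Gamma$-equivariance of $h$ each $X_n$ is $\Gamma$-invariant; by hypothesis~(1) each $X_n$ is $\Gamma$-cocompact; and $X = \bigcup_n X_n$ since $h$ is $\R$-valued. Together with hypothesis~(2), this sets the stage for Brown's Criterion, whose conclusion (that $\Gamma$ is of type $\F_\infty$) then reduces to showing that the connectivity of the $X_n$ tends to infinity.

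To control that connectivity I would invoke the standard Morse lemma: for $q < q'$ the sublevel set $X^{h \le q'}$ is obtained from $X^{h \le q}$ up to homotopy by gluing, for each vertex $x\in X^{(0)}$ with $q < h(x) \le q'$, the cone on $\dlk x$ along $\dlk x$. Hence if every descending link in this range is $n$-connected, the inclusion $X^{h \le q} \hookrightarrow X^{h \le q'}$ is $(n+1)$-connected. Given $n\in\N$, hypothesis~(3) supplies a threshold $t$ above which all descending links are $n$-connected, so letting $q' \to \infty$ and using contractibility of $X$, a direct limit argument shows that $X^{h \le t}$ is itself $n$-connected. Choosing a sufficiently sparse subsequence of $(q_n)$ thus guarantees that the connectivity of $X_n$ tends to infinity, completing the verification of Brown's Criterion.

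The one point that deserves comment -- and which is really the only thing separating this lemma from \cite[Theorem~5.7]{belk16} -- is that Brown's Criterion is classically stated with finite cell stabilizers. The version needed here, with type $\F_\infty$ stabilizers, follows from the same proof in \cite{brown87}: the argument produces an equivariant partial resolution whose modules are sums of $\Z[\Gamma/\Gamma_\sigma]$ indexed by cells $\sigma$, and one only needs each $\Gamma_\sigma$ to admit finite type resolutions through the relevant range. As the authors themselves note just above the statement, this extension is essentially automatic, so there is no real obstacle; the lemma is a direct packaging of standard machinery with the mild $\F_\infty$ generalization substituted in.
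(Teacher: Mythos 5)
Your proposal is correct and matches the paper's intent exactly: the paper itself treats this lemma as immediate from Brown's Criterion combined with Bestvina--Brady Morse theory, noting only that Brown's Criterion still applies with type $\F_\infty$ (rather than finite) stabilizers, which is precisely the point you address. Your write-up simply fills in the standard details that the paper leaves implicit.
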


\subsection{Groupoid and poset}\label{sec:gpoid_poset}

The goal of this subsection is to construct a simplicial complex on which $V_d(G)$ acts nicely. Much of what we do here is similar to what Belk and Matucci did in \cite{belk16} for the R\"over group, though more in the cloning system language from \cite{witzel18}.

First we define an important groupoid $\Groupoid{S_*\wr G}$ related to the group $\Thomp{S_*\wr G}$. Consider triples of the form $(F_-,f,F_+)$ for $F_-$ and $F_+$ finite rooted $d$-ary forests with the same number of leaves, say $n$, and $f$ an element of $S_n\wr G$. Here by \emph{rooted $d$-ary forest} we always mean an ordered tuple of rooted $d$-ary trees, and the roots of these trees are called the \emph{roots} of the forest. Note that if a finite rooted $d$-ary forest has $n$ leaves then it necessarily has at most $n$ roots. Also note that in the triple $(F_-,f,F_+)$ we do not require $F_-$ and $F_+$ to have the same number of roots, only the same number of leaves. We have the same notion of reduction and expansion as with triples $(T_-,f,T_+)$, namely if $F_+'$ is $F_+$ with a $d$-caret added to its $k$th leaf and $F_-'$ is $F_-$ with a $d$-caret added to its $\rho_n(f)(k)$th leaf, then $(F_-',(f)\clone_k^n,F_+')$ is an \emph{expansion} of $(F_-,f,F_+)$. The reverse of an expansion is a \emph{reduction}. Two triples are considered equivalent if we can get from one to the other via a sequence of reductions and expansions, and we write $[F_-,f,F_+]$ for the equivalence class of $(F_-,f,F_+)$. Note that the number of roots of each forest is invariant under this equivalence relation.

\begin{definition}[Heads, feet]
 The \emph{heads} of $[F_-,f,F_+]$ are the roots of $F_-$ and the \emph{feet} are the roots of $F_+$.
\end{definition}

These equivalence classes of triples form a groupoid $\Groupoid{S_*\wr G}$ (roughly, a groupoid is like a group except only certain pairs of elements can be multiplied). For $x,y\in\Groupoid{S_*\wr G}$ the product $xy$ exists if and only if the number of feet of $x$ equals the number of heads of $y$. In this (and only this) case there exists a sequence of expansions of $x$ and $y$ respectively yielding $[F,f,E]$ and $[E,g,D]$ for some $F$, $E$, $D$, $f$ and $g$. Then the product is $xy=[F,f,E][E,g,D]\defeq [F,fg,D]$. This is easily checked to be a groupoid operation, for the same reasons $\Thomp{S_*\wr G}$ is a group. In fact $\Thomp{S_*\wr G}$ is precisely the subgroup of $\Groupoid{S_*\wr G}$ consisting of elements with $1$ head and $1$ foot. The identity elements of $\Groupoid{S_*\wr G}$ are those of the form $[1_n,\vv{\id},1_n]$ for $1_n$ the forest consisting only of $n$ roots (which are thus also leaves). We will also sometimes denote $[1_n,\vv{\id},1_n]$ by $1_n$, and it will always be clear whether this symbol represents a forest or a groupoid element.

\begin{remark}\label{rmk:gpoid_cantor}
 In exactly the same way that the group $\Thomp{S_*\wr \Aut(\tree_d)}$ is isomorphic to the group of almost-automorphisms of $\tree_d$, the groupoid $\Groupoid{S_*\wr \Aut(\tree_d)}$ is isomorphic to the groupoid of ``almost-isomorphisms'' between forests of finitely many copies of $\tree_d$. That is, an element $[F_-,\sigma\vv{f},F_+]$ with $m$ heads and $n$ feet takes $n$ copies of $\tree_d$, chops away $F_+$, performs $\sigma\vv{f}$ on the resulting trees, and glues them together with $F_-$ into $m$ copies of $\tree_d$. In this way, we can view $[F_-,\sigma\vv{f},F_+]$ as a homeomorphism from $n$ copies of $\partial\tree_d$ to $m$ copies of $\partial\tree_d$.
\end{remark}

For $n,m\in \N$ let $P_n^m$ be the subset of $\Groupoid{S_*\wr G}$ defined by
$$P_n^m\defeq \{x\mid x\text{ has }m\text{ heads and }n\text{ feet}\}\text{.}$$
Also set
$$P^m\defeq \bigcup_{n\in\N} P_n^m \text{ and } P_n\defeq \bigcup_{m\in\N} P_n^m \text{.}$$
Later we will be especially interested in $P^1$, since $\Thomp{S_*\wr G}=P_1^1$ acts from the left on $P^1$.

Recall that we have fixed a subgroup $H\le G$. For each $n\in\N$ let $[1_n,S_n \wr H,1_n]$ be the group $\{[1_n,w,1_n]\mid w\in S_n \wr H\}$ inside $\Groupoid{S_*\wr G}$, so it makes sense to multiply an element of $P_n$ on the right by an element of $[1_n,S_n \wr H,1_n]$. For $x\in P_n$ write $[x]_H$ for the coset $x[1_n,S_n \wr H,1_n]$. For notational ease, if $x=[F_-,f,F_+]$ we will write $[F_-,f,F_+]_H$ for $[x]_H$ rather than $[[F_-,f,F_+]]_H$. Define $P_H$ to be
$$P_H\defeq \bigcup_{n\in\N} P_n/[1_n,S_n \wr H,1_n] \text{.}$$
Also write $P_H^m$ for the subset of $P_H$ consisting of those $[x]_H$ such that $x$ has $m$ heads. Write $\sim_H$ for the equivalence relation on $\Groupoid{S_*\wr G}$ yielding $P_H$, so $P_H=\Groupoid{S_*\wr G}/\sim_H$.

We want to put a poset structure on $P_H$. First we need to define a \emph{splitting}. Let $\vertwedge_k^n$ be the $d$-ary forest obtained from $1_n$ by adding a $d$-caret to its $k$th leaf.

\begin{definition}[Splitting, length]\label{def:splitting}
 An element $\Lambda\in\Groupoid{S_*\wr G}$ of the form
$$[1_n,\sigma_0\vv{h}_0,1_n][\vertwedge_{k_1}^n,\sigma_1\vv{h}_1,1_{n+d-1}]\cdots[\vertwedge_{k_r}^{n+(r-1)(d-1)},\sigma_r\vv{h}_r,1_{n+r(d-1)}]$$
for some $r$ and some choice of $\sigma_i\vv{h}_i\in S_{n+i(d-1)} \wr H$ ($0\le i\le r$) and $1\le k_i\le n+(i-1)(d-1)$ ($1\le i\le r$) is called a \emph{splitting}. Note that $\Lambda$'s status as a splitting depends on the choice of $H$, but since we have fixed $H$ there will not be any terminological ambiguities. We call $r$ the \emph{length} of $\Lambda$. If $r=0$ we call $\Lambda$ a \emph{trivial splitting}. The length $r$ is well defined since $n$ and $n+r(d-1)$ are invariants of $\Lambda$, namely they are its number of heads and feet. Length is also invariant under $\sim_H$.
\end{definition}

Now define a partial order $\le$ on $P_H$ by declaring that $[x]_H\le [y]_H$ whenever $x^{-1}y$ is a splitting. This is obviously reflexive, is antisymmetric since any non-trivial splitting strictly increases the number of feet, and is transitive since a product of splittings is a splitting. Also, if $[x]_H=[x']_H$ and $[y]_H=[y']_H$ then $x^{-1}y$ is a splitting if and only if $(x')^{-1}y'$ is a splitting, so the definition of $\le$ does not require us to locate any special representatives of elements of $P_H$.

Elements of $\Groupoid{S_*\wr G}$ can be drawn similarly to elements of $\Thomp{S_*\wr G}$. Figure~\ref{fig:splitting} gives a $2$-ary example of an element that is a length-$2$ splitting.

\begin{figure}[htb]
 \centering
 \begin{tikzpicture}[line width=0.8pt]
  \draw[->] (0,0) -- (2,1);   \draw[->] (2,0) -- (0,1);
	\node at (0,-.25) {$h_1$}; \node at (2,-.25) {$h_2$};
	\draw (-.5,-1) -- (0,-.5) -- (.5,-1)   (1.5,-1) -- (2,-.5) -- (2.5,-1);
	\draw[->] (-.5,-2) -- (-.5,-1); \draw[->] (.5,-2) -- (1.5,-1); \draw[->](1.5,-2) -- (2.5,-1); \draw[->](2.5,-2) -- (.5,-1);
  \node at (-.5,-2.25) {$h_3$}; \node at (.5,-2.25) {$h_4$}; \node at (1.5,-2.25) {$h_5$}; \node at (2.5,-2.25) {$h_6$};
 \end{tikzpicture}
 \caption{The splitting $[1_2,(1~2)(h_1,h_2),1_2][\protect\vertwedge_1^2,\protect\vv{\id},1_3][\protect\vertwedge_3^3,(2~3~4)(h_3,h_4,h_5,h_6),1_4]$ in $\protect\Groupoid{S_*\wr \Aut(\tree_d)}$. (Here $d=2$.)}
 \label{fig:splitting}
\end{figure}

Note that number of heads and number of feet are invariant under the equivalence relation coming from $H$, and the product $x\Lambda$ only makes sense if the number of feet of $x$ equals the number of heads of $\Lambda$. If we want to indicate that the product $x\Lambda$ makes sense, we may call $\Lambda$ a \emph{splitting of $x$}. We may also refer to $[x\Lambda]_H$ as a \emph{splitting of $[x]_H$}.

As a remark, using \cite[Definition~3.1]{belk16} in the R\"over group case, Belk and Matucci would call $[y]_H$ an ``expansion'' of $[x]_H$ if $[x]_H\le [y]_H$, and reserve the term ``splitting'' for those where $\Lambda=x^{-1}y$ has length $1$. We will just use the term splitting, and will say ``length-$1$ splitting'' when we want to only consider those.

There is a natural Morse function $\feet$, given by counting feet, on the geometric realization $|P_H^m|$, namely for $[x]_H \in P_H^m$ with $n$ feet, define
$$\feet([x]_H)\defeq n \text{,}$$
and then extend $\feet$ affinely to the simplices of $|P_H^m|$. Since $[x]_H<[y]_H$ in $P_H^m$ implies $\feet([x]_H)<\feet([y]_H)$ we know $\feet$ is non-constant on positive-dimensional simplices, and the image of $\feet$ in $\R$ is discrete (in fact it equals $\{m+(d-1)q\mid q\in \N\cup\{0\}\}$), so $\feet$ is indeed a Morse function on $|P_H^m|$.

At this point we begin gradually imposing assumptions on $H$. The first assumption ensures that for each $[x]_H\in P_H^m$ there are only finitely many $[y]_H>[x]_H$ with a given $\feet$-value $\feet([y]_H)$. This will be crucial when we need to deal with issues of cocompactness and finiteness properties of stabilizers. We call this assumption ``$A$-coarsely self-similar'':

\begin{definition}[$A$-coarsely self-similar, artifact]\label{def:Acss}
 Let $A$ be a finite subset of $V_d\cap\Aut(\tree_d)$. Call a subgroup $C\le \Aut(\tree_d)$ \emph{$A$-coarsely self-similar} if for every $c\in C$, given the wreath recursion $c=\sigma(c_1,\dots,c_d)$ we have that for each $1\le i\le d$ either $c_i\in C$ or else $c_i\in A$. The elements of $A$ will be called \emph{artifacts}.
\end{definition}

\begin{lemma}[Key finiteness lemma]\label{lem:fin_splits}
 If $H$ is $A$-coarsely self-similar for some finite $A$, then for each $[x]_H\in P_H^m$ and each $q\in\N$ there are only finitely many $[y]_H\in P_H^m$ with $[x]_H<[y]_H$ and $\feet([y]_H)=q$.
\end{lemma}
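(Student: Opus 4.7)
The plan is to parametrize the classes $[y]_H>[x]_H$ with $\feet([y]_H)=q$ by $\sim_H$-classes of splittings $\Lambda$, and to bound the count using a finite-tree-shape / finite-coset argument. Every such $[y]_H$ equals $[x\Lambda]_H$ for a splitting $\Lambda$ of length $r=(q-n)/(d-1)$, where $n=\feet([x]_H)$. Since left-multiplication by $x$ in the groupoid $\Groupoid{S_*\wr G}$ is injective and respects $\sim_H$, it suffices to count $\sim_H$-classes of length-$r$ splittings with $n$ heads. Carrying out the groupoid multiplication, every such splitting reduces to the form $[T,v,1_q]$ where $T$ is a $d$-ary forest with $n$ roots and $q$ leaves and $v\in S_q\wr G$. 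There are only finitely many such forests $T$, so the task reduces to showing, for each fixed $T$, that the reachable $v$'s form only finitely many right $S_q\wr H$-cosets.

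To do this I would define inductively a finite set $\Xi_r\subseteq \Aut(\tree_d)$ with the property that every coordinate of $v$ in a length-$r$ splitting lies in $\Xi_r\cdot H$, with base case $\Xi_0=\{\id\}$. An inductive step consists of adding a caret (cloning) followed by right-multiplication by a decoration in $S\wr H$. When a coordinate of the form $xh$ (with $x\in\Xi_{r-1}$, $h\in H$) is cloned, its first-level states are the products of a state of $x$ with a state of $h$; by $A$-coarse self-similarity of $H$, each state of $h$ lies in $H\cup A$, while the states of $x$ lie in the closure $\overline{\Xi_{r-1}}$ of $\Xi_{r-1}$ under taking states. Setting $\Xi_r\defeq \overline{\Xi_{r-1}}\cup\overline{\Xi_{r-1}}\cdot A$ then captures every reachable coordinate representative modulo $H$, and the subsequent right-decoration preserves $H$-cosets coordinate-wise.

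The crux is that $\Xi_r$ is finite. This follows because every $a\in A\subseteq V_d\cap\Aut(\tree_d)$ acts trivially below the depth of $T_+$ in any $V_d$-representative $((T_-,\sigma\vv{\id},T_+))$, and so has only finitely many non-trivial iterated states; moreover this property is preserved under products, since if $a,b$ have trivial states below depths $d_a,d_b$ respectively, then a short computation with the standard identity for the state of a product shows $ab$ has trivial states below $\max(d_a,d_b)$. Thus $\Xi_r$ inductively consists of finitely many elements of this ``eventually trivial'' class, each with only finitely many iterated states, so $\overline{\Xi_r}$ is also finite. Combining the finitely many tree shapes $T$ with finitely many $v$'s per $T$ (at most $|\Xi_r|^q$ coordinate-tuples modulo right $S_q\wr H$) then yields the required bound. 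The main technical obstacle is precisely this inductive bookkeeping: verifying that forming products with elements of $A$, as forced at each cloning by $A$-coarse self-similarity, never escapes the class of automorphisms with finitely many iterated states, so that the $\Xi_r$ really do remain finite.
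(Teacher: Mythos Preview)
Your argument is correct, and the inductive control via the sets $\Xi_r$ goes through: the key invariant ``every coordinate of $v$ lies in $\Xi_r\cdot H$'' is preserved under cloning (using $(xh)_i=x_{\rho(h)(i)}h_i$ together with $A$-coarse self-similarity) and under the subsequent right $S_{\,\cdot}\wr H$-decoration, and the containment $\Xi_r\subseteq V_d\cap\Aut(\tree_d)$ together with finiteness is preserved because that class is closed under products and each of its elements has only finitely many iterated states.

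The paper, however, takes a slicker route that avoids ever cloning an artifact. Instead of collapsing the splitting to a single $[T,v,1_q]$, it processes the product from left to right and at each stage factors the accumulated decoration as an ``$A$-part'' times an ``$H$-part'' (using $\id\in A$): only the $H$-part is pushed through the next caret and cloned, which by $A$-coarse self-similarity again yields entries in $H\cup A$, and the factoring repeats. After $r$ steps one has rewritten the coset as
\[
[1_n,\sigma_0,1_n]\,[\vertwedge_{\ell_1}^n,\sigma_1\vv{a}_1,1_{n+d-1}]\cdots[\vertwedge_{\ell_r}^{n+(r-1)(d-1)},\sigma_r\vv{a}_r,1_q]\,[1_q,S_q\wr H,1_q],
\]
with only finitely many choices for the $\ell_i$, the permutations $\sigma_i$, and the tuples $\vv{a}_i$ with entries in $A$. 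Because no element of $A$ is ever cloned, the paper's proof does not actually use the hypothesis $A\subseteq V_d\cap\Aut(\tree_d)$---only that $A$ is finite---whereas your approach genuinely relies on it to keep $\overline{\Xi_{r-1}}$ finite. Both are valid; the paper's argument trades your explicit normal form $[T,v,1_q]$ for not having to track iterated states of artifact products.
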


\begin{proof}
 Letting $n\defeq \feet([x]_H)$, the relevant $[y]_H$ are precisely those of the form $[x\Lambda]_H$ for $\Lambda$ a splitting satisfying
$$[\Lambda]_H=[1_n,\sigma_0\vv{h}_0,1_n][\vertwedge_{k_1}^n,\sigma_1\vv{h}_1,1_{n+d-1}]\cdots[\vertwedge_{k_r}^{n+(r-1)(d-1)},\sigma_r\vv{h}_r,1_q][1_q,S_q \wr H,1_q]$$
with length $r=\frac{q-n}{d-1}$, and some choice of $\sigma_i\vv{h}_i\in S_{n+i(d-1)} \wr H$ and $1\le k_i\le n+(i-1)(d-1)$. (If $q-n$ is not divisible by $d-1$ then no such $y$ exist, so we can assume $q-n$ is divisible by $d-1$.)

 We need to show that as the $\sigma_i$, $\vv{h}_i$ and $k_i$ range over all possibilities, the set of groupoid cosets of the above form is finite. Note that for arbitrary $p$, $k$ and $\sigma\vv{h}\in S_p \wr H$ we have $[1_p,\sigma\vv{h},1_p][\vertwedge_k^p,\vv{\id},1_{p+d-1}]=[\vertwedge_{\sigma(k)}^p,(\sigma\vv{h})\clone_k^p,1_{p+d-1}]$. Since $H$ is $A$-coarsely self-similar, the tuple in the output of $(\sigma\vv{h})\clone_k^p$ has entries from $H\cup A$. Hence (assuming without loss of generality that $\id\in A$) we can rewrite
$$[1_n,\sigma_0\vv{h}_0,1_n][\vertwedge_{k_1}^n,\sigma_1\vv{h}_1,1_{n+d-1}]\cdots[\vertwedge_{k_r}^{n+(r-1)(d-1)},\sigma_r\vv{h}_r,1_q][1_q,S_q \wr H,1_q]$$
as
$$[1_n,\sigma_0,1_n][\vertwedge_{\ell_1}^n,\sigma_1\vv{a}_1,1_{n+d-1}]\cdots[\vertwedge_{\ell_r}^{n+(r-1)(d-1)},\sigma_r\vv{a}_r,1_q][1_q,\sigma\vv{h},1_q][1_q,S_q \wr H,1_q]$$
for some $1\le \ell_i\le n+(i-1)(d-1)$, $\vv{a}_i\in A^{n+i(d-1)}$ and $\sigma\vv{h}\in S_{n+r(d-1)}\wr H$. Intuitively, we have pushed all the things from $H$ to the right. Now observe that $[1_q,\sigma\vv{h},1_q][1_q,S_q \wr H,1_q]=[1_q,S_q \wr H,1_q]$, so our coset is of the form
$$[1_n,\sigma_0,1_n][\vertwedge_{\ell_1}^n,\sigma_1\vv{a}_1,1_{n+d-1}]\cdots[\vertwedge_{\ell_r}^{n+(r-1)(d-1)},\sigma_r\vv{a}_r,1_q][1_q,S_q \wr H,1_q]\text{,}$$
with the only variable parameters being the $\sigma_i$, $\vv{a}_i$ and $\ell_i$, and indeed there are only finitely many possibilities.
\end{proof}

The action of $V_d(G)=\Thomp{S_*\wr G}=P_1^1$ on $P^1$ from the left respects $\sim_H$, and the poset structure on $P^1$ and $P_H^1$, so $V_d(G)$ acts on the simplicial complex $|P_H^1|$. We now use Lemma~\ref{lem:fin_splits} to show that this action is ``nice'' assuming $H$ is $A$-coarsely self-similar.

\begin{lemma}[Stabilizers]\label{lem:stabs}
 Suppose $H$ is $A$-coarsely self-similar for some finite $A$. Let $[x]_H$ be a $0$-simplex in $|P_H^1|$, say $\feet([x]_H)=n$. Then the stabilizer in $V_d(G)$ of $[x]_H$ is isomorphic to $S_n \wr H$. The stabilizer of any $k$-simplex in $|P_H^1|$ with vertices $[x]_H<[x_1]_H<\cdots<[x_k]_H$ is a finite index subgroup of $\Stab_{V_d(G)}([x]_H)$, hence a finite index subgroup of $S_n \wr H$.
\end{lemma}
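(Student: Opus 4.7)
The plan for the vertex case is to identify $\Stab_{V_d(G)}([x]_H)$ with the subgroupoid $[1_n,S_n\wr H,1_n]\cong S_n\wr H$ via conjugation by $x$. Writing $n=\feet([x]_H)$, I would first observe that $g\in V_d(G)=P_1^1$ fixes $[x]_H$ if and only if $gx\sim_H x$, which by the definition of $\sim_H$ and the groupoid product structure is equivalent to $x^{-1}gx\in [1_n,S_n\wr H,1_n]$. This motivates defining
\[
\psi\colon \Stab_{V_d(G)}([x]_H)\to S_n\wr H,\qquad g\mapsto x^{-1}gx,
\]
where all products are taken in $\Groupoid{S_*\wr G}$ and the codomain is identified with $[1_n,S_n\wr H,1_n]$ in the obvious way. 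I would then verify in turn that $\psi$ is a homomorphism (the middle factor cancels in $\psi(g_1)\psi(g_2)$ since $xx^{-1}=1_1$), that $\psi$ is injective (if $x^{-1}gx=1_n$ then $g=x 1_n x^{-1}=1_1$), and that $\psi$ is surjective: for any $w\in S_n\wr H$ the element $g\defeq x[1_n,w,1_n]x^{-1}$ has one head and one foot, hence lies in $V_d(G)$, and satisfies $\psi(g)=w$.

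For the $k$-simplex case, I would first note that the left action of $V_d(G)$ on $P_H^1$ preserves the partial order: if $[x]_H<[y]_H$ so that $x^{-1}y$ is a splitting, then $(gx)^{-1}(gy)=x^{-1}y$ is the same splitting. Therefore any group element stabilizing the simplex with ordered vertices $[x]_H<[x_1]_H<\cdots<[x_k]_H$ must fix each vertex individually, and
\[
\Stab_{V_d(G)}(\{[x]_H,[x_1]_H,\dots,[x_k]_H\})=\bigcap_{i=0}^{k}\Stab_{V_d(G)}([x_i]_H)\le \Stab_{V_d(G)}([x]_H),
\]
with the convention $[x_0]_H\defeq[x]_H$. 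To get finite index, for each $i\ge 1$ I would apply Lemma~\ref{lem:fin_splits} to conclude that
\[
\{[y]_H\in P_H^1\mid [x]_H<[y]_H,\ \feet([y]_H)=\feet([x_i]_H)\}
\]
is a finite set, and this set is permuted by $\Stab_{V_d(G)}([x]_H)$ since the action preserves the order. The orbit-stabilizer theorem then exhibits $\Stab_{V_d(G)}([x]_H)\cap \Stab_{V_d(G)}([x_i]_H)$ as a finite-index subgroup of $\Stab_{V_d(G)}([x]_H)$, and intersecting the $k$ resulting finite-index subgroups recovers the full simplex stabilizer.

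The main technical subtlety will be the groupoid-theoretic bookkeeping in the first paragraph: when $n\ne 1$ the element $x$ is not a groupoid endomorphism, so the products $x^{-1}gx$ and $x[1_n,w,1_n]x^{-1}$ need to be checked against the composability conditions of $\Groupoid{S_*\wr G}$ rather than handled by analogy with ordinary group conjugation, and one must confirm that $\psi$ respects the group structure inherited from this more delicate situation. Once that is in hand, the rest reduces to Lemma~\ref{lem:fin_splits} plus standard orbit-stabilizer reasoning.
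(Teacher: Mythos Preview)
Your proposal is correct and follows essentially the same route as the paper: conjugation by $x$ to identify the vertex stabilizer with $[1_n,S_n\wr H,1_n]$, then Lemma~\ref{lem:fin_splits} plus orbit--stabilizer for the higher simplices. The only cosmetic difference is that you justify pointwise fixing of the chain via order preservation ($(gx)^{-1}(gy)=x^{-1}y$), whereas the paper argues via preservation of the Morse function $\feet$ together with the strict inequalities $\feet([x]_H)<\cdots<\feet([x_k]_H)$; both arguments are equally short.
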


\begin{proof}
 An element $[U_-,f,U_+] \in V_d(G)$ stabilizes $[x]_H$ if and only if $[U_-,f,U_+]x[1_n,S_n \wr H,1_n] = x[1_n,S_n \wr H,1_n]$, if and only if there exists $[1_n,\sigma\vv{h},1_n]\in [1_n,S_n \wr H,1_n]$ such that $[U_-,f,U_+]x=x[1_n,\sigma\vv{h},1_n]$, if and only if $x^{-1}[U_-,f,U_+]x=[1_n,\sigma\vv{h},1_n]$. Hence there is an injective map from the stabilizer $\Stab_{V_d(G)}([x]_H)$ to $[1_n,S_n \wr H,1_n]$ via
$$[U_-,f,U_+] \mapsto x^{-1}[U_-,f,U_+]x\text{.}$$
Moreover, $x[1_n,\sigma\vv{h},1_n]x^{-1}$ lies in $\Stab_{V_d(G)}([x]_H)$ for any $[1_n,\sigma\vv{h},1_n]\in [1_n,S_n\wr H,1_n]$, so this map is also surjective and indeed $\Stab_{V_d(G)}([x]_H)$ is isomorphic to $S_n \wr H$.

 Now consider $[x]_H<[x_1]_H<\cdots<[x_k]_H$, and call the simplex $\Delta$. Since the action of $V_d(G)$ on $|P_H^1|$ preserves the Morse function $\feet$, and since $\feet([x]_H)<\cdots<\feet([x_k]_H)$, we know that $\Stab_{V_d(G)}(\Delta)=\Stab_{V_d(G)}([x]_H)\cap\Stab_{V_d(G)}([x_1]_H)\cap\cdots\cap \Stab_{V_d(G)}([x_k]_H)$. Hence the result will follow if we can prove that $\Stab_{V_d(G)}([x]_H)\cap \Stab_{V_d(G)}([y]_H)$ has finite index in $\Stab_{V_d(G)}([x]_H)$ whenever $[x]_H<[y]_H$.

Say $[x]_H<[y]_H=x\Lambda[1_q,S_q \wr H,1_q]$ is arbitrary. All the vertices $[y']_H$ in the orbit of $[y]_H$ under $\Stab_{V_d(G)}([x]_H)$ have $\feet([y']_H)=q$, and Lemma~\ref{lem:fin_splits} says only finitely many such $[y']_H$ exist. Since this orbit is finite, $\Stab_{V_d(G)}([x]_H)\cap \Stab_{V_d(G)}([y]_H) = \Stab_{\Stab_{V_d(G)}([x]_H)}([y]_H)$ has finite index in $\Stab_{V_d(G)}([x]_H)$.
\end{proof}

\begin{corollary}\label{cor:stabs}
 Suppose $H$ is $A$-coarsely self-similar for some finite $A$, and is of type $\F_\infty$. Then every stabilizer in $V_d(G)$ of a simplex in $|P_H^1|$ is of type $\F_\infty$.
\end{corollary}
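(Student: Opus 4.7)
The plan is to chain together the structural description of stabilizers from Lemma~\ref{lem:stabs} with standard permanence properties of the finiteness property $\F_\infty$. By Lemma~\ref{lem:stabs}, for any simplex $\Delta$ of $|P_H^1|$ there is an integer $n$ (namely the number of feet of the bottom vertex) such that $\Stab_{V_d(G)}(\Delta)$ is a finite index subgroup of $S_n \wr H$. So the task reduces to showing that $S_n \wr H$ is of type $\F_\infty$, and then that a finite index subgroup of an $\F_\infty$ group is $\F_\infty$.

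For the first reduction, recall that $S_n \wr H = S_n \ltimes H^n$, and the projection $S_n\wr H \to S_n$ has kernel $H^n$, which is therefore a normal subgroup of index $n!$. Since $H$ is assumed to be of type $\F_\infty$ and finite direct products preserve $\F_\infty$, the group $H^n$ is of type $\F_\infty$. The short exact sequence
\[
1 \to H^n \to S_n \wr H \to S_n \to 1
\]
then has $\F_\infty$ kernel and finite (hence $\F_\infty$) quotient, so $S_n \wr H$ is of type $\F_\infty$ by the standard extension property of the finiteness properties $\F_m$ (equivalently, one may observe directly that $H^n$ has finite index in $S_n \wr H$ and invoke the fact that being $\F_\infty$ is invariant under passage between a group and any of its finite index subgroups).

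Finally, Lemma~\ref{lem:stabs} says that for a simplex of $|P_H^1|$ with vertices $[x]_H < [x_1]_H < \dots < [x_k]_H$, the stabilizer is a finite index subgroup of $\Stab_{V_d(G)}([x]_H) \cong S_n \wr H$. Because $\F_\infty$ is invariant under passage to finite index subgroups, this stabilizer is itself of type $\F_\infty$, completing the proof.

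No step is really an obstacle: the work has already been done in Lemma~\ref{lem:stabs} (which used the $A$-coarsely self-similar hypothesis crucially through Lemma~\ref{lem:fin_splits}), and the rest is a routine invocation of standard permanence properties of $\F_\infty$ under direct products, finite extensions, and finite index subgroups.
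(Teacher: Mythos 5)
Your proposal is correct and follows the paper's argument essentially verbatim: reduce via Lemma~\ref{lem:stabs} to finite index subgroups of $S_n\wr H$, note that $H\times\cdots\times H$ is of type $\F_\infty$ and has finite index there, and invoke invariance of $\F_\infty$ under passage to and from finite index subgroups. No differences worth noting.
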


\begin{proof}
 By Lemma~\ref{lem:stabs} every stabilizer is a finite index subgroup of some $S_n \wr H = S_n \ltimes (H\times\cdots\times H)$. Since $H$ is of type $\F_\infty$ so is $H\times\cdots\times H$, and since a group is of type $\F_\infty$ if and only if its finite index subgroups are of type $\F_\infty$, the result is immediate.
\end{proof}

Another application of Lemma~\ref{lem:fin_splits} is the following cocompactness result.

\begin{lemma}[Cocompact]\label{lem:cocpt}
 Suppose $H$ is $A$-coarsely self-similar for some finite $A$. For each $q\in\N$, the sublevel set $|P_H^1|^{\feet\le q}$ is $V_d(G)$-cocompact.
\end{lemma}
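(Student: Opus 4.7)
The plan is to establish cocompactness at the level of simplices by showing two things: that $V_d(G)$ acts transitively on the set of vertices of a fixed $\feet$-value, and that each vertex lies in only finitely many simplices whose vertex set lies in the sublevel set. Since the possible values of $\feet$ on $P_H^1$ are precisely $\{1 + k(d-1) : k \ge 0\}$, the sublevel set involves only finitely many distinct $\feet$-values below $q$, so a finite count of orbits will follow immediately.

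For the first ingredient, I would observe that any vertex of $|P_H^1|$ is represented by some $[T, f, 1_n]_H$, where $T$ is a $d$-ary tree with $n$ leaves and $f \in S_n \wr G$ (this is the canonical form obtained by reducing all carets out of the right-hand forest). Given two such vertices $[T, f, 1_n]_H$ and $[T', f', 1_n]_H$ with the same $\feet$-value $n$, the triple $g \defeq [T', f'f^{-1}, T]$ is a valid element of $P_1^1 = V_d(G)$ (both $T$ and $T'$ are trees with $n$ leaves, and $f'f^{-1} \in S_n \wr G$). Since the middle forests in $g$ and $x = [T, f, 1_n]$ already match, the groupoid product is $g \cdot x = [T', f', 1_n]$, and hence $g \cdot [T, f, 1_n]_H = [T', f', 1_n]_H$. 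This shows that $V_d(G)$ acts transitively on vertices of any fixed $\feet$-value, so there are finitely many orbits of vertices in $|P_H^1|^{\feet \le q}$.

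For the second ingredient, Lemma~\ref{lem:fin_splits} says that any vertex $[x]_H$ with $\feet([x]_H) \le q$ has only finitely many $[y]_H > [x]_H$ with $\feet([y]_H) \le q$. Consequently $[x]_H$ lies in only finitely many simplices of $|P_H^1|^{\feet \le q}$. Since the minimum vertex of a simplex determines its $V_d(G)$-orbit up to the choice of finitely many larger vertices, the number of $V_d(G)$-orbits of simplices with minimum vertex in any fixed orbit of vertices is finite. Combining the two ingredients, the sublevel set has finitely many $V_d(G)$-orbits of simplices and is therefore $V_d(G)$-cocompact. There is no real obstacle here; the only point requiring even a moment's thought is the correct normal form for vertices, and I have used $A$-coarse self-similarity only indirectly through its application in Lemma~\ref{lem:fin_splits}.
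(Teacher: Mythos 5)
Your overall strategy coincides with the paper's: exhibit an element of $V_d(G)$ carrying any vertex of a given $\feet$-value to any other, then use Lemma~\ref{lem:fin_splits} to bound the number of simplex orbits over each vertex orbit by counting simplices via their unique minimal vertex. The second half is fine. The gap is the ``canonical form'' in the first half: it is \emph{not} true that every vertex of $P_H^1$ with $n$ feet is represented by a triple $[T,f,1_n]_H$ with trivial right-hand forest. A reduction removes a $d$-caret from \emph{both} forests simultaneously, and it exists only when the middle group element lies in the image of the corresponding cloning map $\clone_k^m$, which is a proper subset of $S_{m+d-1}\wr G$; so a generic triple with non-trivial right forest cannot be reduced to one with trivial right forest, and right-multiplication by $[1_n,S_n\wr H,1_n]$ does not change this. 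Concretely, take $G=H=\{\id\}$ and $d=2$, so $V_d(G)=V_2$. The $V_2$-orbit of $[\vertwedge_1^1,\vv{\id},1_2]_H$ is infinite, since its stabilizer is the finite group $S_2$ by Lemma~\ref{lem:stabs}, and every vertex in this orbit has $\feet=2$; yet there is exactly one vertex of the form $[T,f,1_2]_H$, because the only binary tree with two leaves is the single caret and we quotient by $S_2\wr H=S_2$ on the right. Your transitivity element $[T',f'f^{-1},T]$ also implicitly requires $T$ and $T'$ to have the same number of leaves so that $f$ and $f'$ lie in a common $S_N\wr G$, which fails for arbitrary representatives.

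The repair is small and is exactly what the paper does: take arbitrary representatives $[T_-,f,F_+]_H$ and $[U_-,g,E_+]_H$ of two vertices with the same $\feet$-value, so that $F_+$ and $E_+$ have the same number of roots. Then the groupoid product $[U_-,g,E_+][F_+,f^{-1},T_-]$ is defined in $\Groupoid{S_*\wr G}$, has one head and one foot, hence lies in $P_1^1=V_d(G)$, and it carries $[T_-,f,F_+]_H$ to $[U_-,g,E_+]_H$. With this substitution the rest of your argument goes through.
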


\begin{proof}
 First we claim $|P_H^1|^{\feet\le q}$ has finitely many vertex orbits. Indeed, if vertices $[T_-,f,F_+]_H$ and $[U_-,g,E_+]_H$ have the same $\feet$-value, so $F_+$ and $E_+$ have the same number of roots, then the product $[U_-,g,E_+][F_+,f^{-1},T_-]$ makes sense in $\Groupoid{S_*\wr G}$, and is an element of $\Thomp{S_*\wr G}=V_d(G)$. It takes $[T_-,f,F_+]_H$ to $[U_-,g,E_+]_H$, so indeed there is only one vertex orbit for each $\feet$-value. Then since $H$ is $A$-coarsely self-similar, Lemma~\ref{lem:fin_splits} implies that there are only finitely many simplex orbits in $|P_H^1|^{\feet\le q}$.
\end{proof}

We would like the posets $P_H^m$ to be directed (meaning any two elements have a common upper bound), since then their geometric realizations $|P_H^m|$ will be contractible. In particular then $|P_H^1|$ will be a contractible complex on which $V_d(G)$ acts. This brings us to our second assumption we will impose on $H$.

\begin{definition}[Nuclear subgroup]\label{def:nuclear}
 View the vertex set of $\tree_d$ as $X^*$ for some $d$-letter alphabet $X$. We will call $H$ \emph{nuclear in $G$} if for all $g\in G$ there exists a finite rooted complete subtree $T$ of $\tree_d$ such that for each $u\in X^*$ with $u$ the address of a leaf of $T$, the state $g_u$ lies in $H$.
\end{definition}

\begin{lemma}\label{lem:poset_cible}
 If $H$ is nuclear in $G$ then $P_H^m$ is directed, so $|P_H^m|$ is contractible.
\end{lemma}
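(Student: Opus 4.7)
The plan is to prove directedness of $P_H^m$ and then invoke the standard fact that the geometric realization of a directed poset is contractible (any finite subcomplex is contained in the join of its finitely many vertices with any common upper bound). I will establish directedness in two stages. First, for each $[x]_H$ I will construct a convenient upper bound of the ``identity--middle'' form $[V,\id,1_L]$. Second, I will show two such identity--middle elements admit a common upper bound of the same form, obtained from a common refinement of their head trees.

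For the first stage, fix a representative $x = [F_-,\sigma(f_1,\dots,f_N),F_+]$ of $[x]_H$ with $n_x$ feet. Apply nuclearity to each tuple entry $f_i \in G$: there is a finite rooted complete subtree $T_i \subseteq \tree_d$ such that every leaf-state $(f_i)_u$ lies in $H$. Glue each $T_i$ onto the $i$-th leaf of $F_+$ to produce a refined foot tree $F_+^*$, and expand $x$ along this refinement by iterated application of the $d$-ary cloning maps. By the formula
$$(f)\clone_k^n = (\sigma)\symmclone_k^n \rho^{(k)}(f_k)(f_1,\dots,f_{k-1},f_k^1,\dots,f_k^d,f_{k+1},\dots,f_n),$$
each cloning step replaces the $k$-th tuple entry by its wreath-recursion children, so after fully expanding along the $T_i$'s one obtains a representative $x = [F_-^*, f^*, F_+^*]$ whose tuple entries are precisely the leaf-states of the original $f_i$'s. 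Hence $f^* \in S_M \wr H$, where $M$ is the common leaf count of $F_\pm^*$.

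Now set $z_x \defeq [F_-^*, \id, 1_M] \in P^m$. Then $x^{-1}z_x = [F_+^*,(f^*)^{-1},F_-^*][F_-^*,\id,1_M] = [F_+^*,(f^*)^{-1},1_M]$, and $(f^*)^{-1} \in S_M \wr H$ since $H$ is a subgroup. Choose any order $k_1,\dots,k_r$ of caret additions expressing $F_+^*$ as $1_{n_x}$ with $r = (M-n_x)/(d-1)$ carets added. Then this element is the splitting
$$[1_{n_x},\id,1_{n_x}][\vertwedge_{k_1}^{n_x},\id,1_{n_x+d-1}]\cdots[\vertwedge_{k_{r-1}}^{M-2(d-1)},\id,1_{M-(d-1)}][\vertwedge_{k_r}^{M-(d-1)},(f^*)^{-1},1_M],$$
concentrating all the $H$-content into the final factor. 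So $[x]_H \le [z_x]_H$. For the second stage, given $[x]_H$ and $[y]_H$, build $z_x$ and $z_y$ as above, let $W$ be a common refinement (union) of the corresponding head trees $F_-^{x,*}$ and $F_-^{y,*}$ (both trees on $m$ roots, so the union refinement exists), and set $z \defeq [W,\id,1_L]$ where $L$ is the number of leaves of $W$. An entirely parallel expansion -- this time trivial, since the middle of $z_x^{-1}$ is $\id$, which clones to $\id$ -- shows that $z_x^{-1}z = [V',\id,1_L]$ for a forest $V'$ on $M_x$ roots obtained from $1_{M_x}$ by caret additions, hence is a splitting with all $\sigma_i\vec{h}_i = \id$. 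Since concatenations of splittings are splittings (the adjacent final caret factor of the first and initial length-$0$ factor of the second combine inside $S_* \wr H$), transitivity yields $[x]_H \le [z_x]_H \le [z]_H$, and symmetrically $[y]_H \le [z]_H$.

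The hard part is justifying that the cloning cascade really lands in $H$ at the leaves of each $T_i$, but this is exactly what the nuclear hypothesis delivers: one inducts on the depth in $T_i$ using the cloning formula above, observing that the tuple entries emerging at depth $j$ are exactly the depth-$j$ states of $f_i$, and at the leaves these are in $H$ by construction of $T_i$. Once directedness is established, contractibility of $|P_H^m|$ follows from the standard fact that directed posets have contractible realizations.
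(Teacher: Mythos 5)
Your proof is correct and follows essentially the same route as the paper: reduce to finding an upper bound of the form $[F,\vv{\id},1_M]_H$ by using nuclearity on each tuple entry to expand the foot forest until the middle lands in $S_M\wr H$ (where it can be absorbed into the $H$-coset, or equivalently pushed into the final factor of the splitting as you do), then take the union of the resulting head forests to get a common upper bound of two such elements. The only cosmetic difference is that the paper first strips $F_+$ down to $1_q$ before expanding along the nuclearity forest, while you glue the trees $T_i$ directly onto $F_+$; both are valid.
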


\begin{proof}
 We must show that any two elements of $P_H^m$ have a common upper bound. It suffices to prove that each element of $P_H^m$ has an upper bound of the form $[F,\vv{\id},1_n]_H$, since any two such elements with the same number of heads $m$ clearly have a common upper bound, namely $[F,\vv{\id},1_q]_H,[E,\vv{\id},1_r]_H\le [F\cup E,\vv{\id},1_p]$ (for appropriate $p$). Let $[F_-,f,F_+]_H$ be an arbitrary element of $P_H^m$, say $F_-$ and $F_+$ have $q$ leaves and $f=\sigma(f_1,\dots,f_q)$. Pass to the upper bound $[F_-,f,1_q]_H$. Since $H$ is nuclear in $G$ we can choose a finite rooted $d$-ary forest $E=(T_1,\dots,T_q)$, say with $n$ total leaves, such that for each $1\le i\le q$ and each $u\in X^*$ that is the address of a leaf of $T_i$, the state $(f_i)_u$ lies in $H$. In particular if we expand $1_q$ to $E$, then $f$ gets replaced by its image under a sequence of appropriate cloning maps to some $f'\in S_n \wr H$, and $F_-$ gets replaced by some $F$ with $n$ leaves, leaving us with $[F_-,f,1_q]_H=[F,f',E]_H$. Pass to the upper bound $[F,f',1_n]_H$. Since $f'\in S_n \wr H$, this equals $[F,\vv{\id},1_n]_H$ and we are done.
\end{proof}

\begin{remark}
 Note that any $G$ is nuclear in itself, so the $|P_G^m|$ are always contractible. However, if $G$ is not of type $\F_\infty$ then neither are the stabilizers in $V_d(G)$ of simplices in $|P_G^1|$, which makes $|P_G^1|$ rather useless if we want to prove that $V_d(G)$ is of type $\F_\infty$. It is possible though that the fact the $|P_G^m|$ is always contractible could be useful in the future.
\end{remark}

To recap, if $H$ is $A$-coarsely self-similar for some finite set of artifacts $A$, and nuclear in $G$, then $|P_H^1|$ is a contractible complex on which $V_d(G)$ acts with stabilizers of type $\F_\infty$, with a $V_d(G)$-equivariant Morse function $\feet$, such that the filtration $|P_H^1|^{\feet\le q}$ is cocompact. In particular all the conditions in Lemma~\ref{lem:morse} are satisfied except the one about descending links. As often happens (e.g., in \cite{belk16,bux16,farley03,farley15,fluch13,martinez-perez16}) the descending links in $|P_H^1|$ are too large to get a good handle on, so in the next section we will retract $|P_H^1|$ to a more manageable complex and analyze descending links there. In the course of this we will need to impose one last assumption on $H$ (see Definition~\ref{def:orderly}).


\section{The $H$-Stein--Farley complex}\label{sec:stein}

The construction of the $H$-Stein--Farley complex here will be rather similar to the construction in \cite{belk16} for the special case of the R\"over group of what Belk and Matucci call the Stein complex, though some details will be more complicated, due to our degree of generality. Again, in general our language will be more in line with the world of cloning systems and \cite{witzel18}.

\subsection{Constructing the complex}\label{sec:construct_stein}

It will be convenient to introduce some notation, namely the direct sum notation Belk and Matucci use in \cite{belk16}. To formally define this in our framework we need some setup. If $F_1,\dots,F_n$ are finite rooted $d$-ary forests, say the trees of $F_i$ are $T_i^1,\dots,T_i^{r_i}$ from left to right, let $F_1\oplus\cdots\oplus F_n$ be the forest whose trees are, from left to right, $T_1^1,\dots,T_1^{r_1},T_2^1,\dots,T_2^{r_2},\dots,T_n^1,\dots,T_n^{r_n}$. For tuples $(f_1^1,\dots,f_1^{r_1}),\dots,(f_r^1,\dots,f_r^{r_n})$ of elements of $G$ let $(f_1^1,\dots,f_1^{r_1})\oplus\cdots\oplus(f_r^1,\dots,f_r^{r_n})$ be the concatenation $(f_1^1,\dots,f_r^{r_n})$. For $\sigma_1\in S_{r_1},\dots,\sigma_n\in S_{r_n}$ let $\sigma_1\oplus\cdots\oplus\sigma_n\in S_{r_1+\cdots+r_n}$ be the permutation defined as follows: for $k\in\{1,\dots,r_1+\cdots+r_n\}$ choose the unique $i$ such that $r_1+\cdots+r_i<k\le r_1+\cdots+r_{i+1}$ (with the expression $k<r_1+\cdots+r_{n+1}$ understood to always hold) and declare that $\sigma_1\oplus\cdots\oplus\sigma_n$ takes $k$ to $\sigma_i(k-r_1-\cdots-r_i)+r_1+\cdots+r_i$. Now finally for $[F_1^-,\sigma_1\vv{f}_1,F_1^+],\dots,[F_n^-,\sigma_n\vv{f}_n,F_n^+]$ elements of $\Groupoid{S_*\wr G}$ define $[F_1^-,\sigma_1\vv{f}_1,F_1^+]\oplus\cdots\oplus[F_n^-,\sigma_n\vv{f}_n,F_n^+]$ to be
$$[F_1^-\oplus\cdots\oplus F_n^-,(\sigma_1\oplus\cdots\oplus\sigma_n)(\vv{f}_1\oplus\cdots\oplus\vv{f}_n),F_1^+\oplus\cdots\oplus F_n^+]\text{.}$$

This direct sum notation is useful for characterizing splittings.

\begin{lemma}\label{lem:decomp_up_bd}
 For $x_1,\dots,x_n\in\Groupoid{S_*\wr G}$, the upper bounds of $[x_1\oplus\cdots\oplus x_n]_H$ in $P_H$ are precisely the elements of the form $[y_1\oplus\cdots\oplus y_n]_H$ such that $[x_i]_H\le [y_i]_H$ for all $1\le i\le n$. In particular the upper bounds of an element of the form $[x(x_1\oplus\cdots\oplus x_n)]_H$ are precisely the elements $[x(y_1\oplus\cdots\oplus y_n)]_H$ such that $[x_i]_H\le [y_i]_H$ for all $1\le i\le n$.
\end{lemma}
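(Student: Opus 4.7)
My plan is to handle the two directions separately, with the forward implication being routine and the converse requiring an induction on the length of the relevant splitting.

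For the ``if'' direction, suppose each $\Lambda_i \defeq x_i^{-1}y_i$ is a splitting. Using the evident identities $(x_1\oplus\cdots\oplus x_n)^{-1} = x_1^{-1}\oplus\cdots\oplus x_n^{-1}$ and $(a_1\oplus\cdots)(b_1\oplus\cdots) = (a_1 b_1)\oplus\cdots$, I get $(x_1\oplus\cdots\oplus x_n)^{-1}(y_1\oplus\cdots\oplus y_n) = \Lambda_1\oplus\cdots\oplus\Lambda_n$, and it remains to verify that a direct sum of splittings is itself a splitting. This is mostly bookkeeping: I would write each $\Lambda_i$ in its defining product form and concatenate the products so as to execute all steps of $\Lambda_1$ first (which affect only the first block of $N_1$ roots), then those of $\Lambda_2$ on the second block of roots (with all position indices shifted by the accumulated leaf count of earlier blocks), and so on. Each intermediate permutation/$H$-adjustment from $\Lambda_i$ is extended by the identity on the other blocks, so it continues to lie in the ambient $S_?\wr H$.

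For the ``only if'' direction, set $\Lambda\defeq(x_1\oplus\cdots\oplus x_n)^{-1}z$, a splitting by hypothesis. The goal is to produce splittings $\Lambda_i$ (with $N_i\defeq$ feet of $x_i$ many heads) and an element $h\in[1_q,S_q\wr H,1_q]$ with $\Lambda\cdot h = \Lambda_1\oplus\cdots\oplus\Lambda_n$; then $y_i\defeq x_i\Lambda_i$ gives $[z]_H = [zh]_H = [y_1\oplus\cdots\oplus y_n]_H$ with $[x_i]_H\le[y_i]_H$, as required. I proceed by induction on the length $r$ of $\Lambda$. When $r=0$, $\Lambda$ already lies in $[1_N,S_N\wr H,1_N]$: factor its wreath-product permutation as a block-respecting permutation $\tau_1\oplus\cdots\oplus\tau_n$ composed with a mixing permutation, take $\Lambda_i\defeq[1_{N_i},\tau_i\vv{k}_i,1_{N_i}]$ where $\vv{k}_i$ is the $i$-th block of the suitably permuted $H$-tuple of $\Lambda$, and let $h$ absorb the mixing permutation. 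For $r\ge 1$, factor $\Lambda = \Lambda'\cdot[\vertwedge_{k_r}^M,\sigma_r\vv{h}_r,1_{M+d-1}]$, apply the inductive hypothesis to $\Lambda'$ to write $\Lambda' = (\Lambda_1'\oplus\cdots\oplus\Lambda_n')\cdot(h')^{-1}$, and then commute $(h')^{-1}$ past the terminal caret-addition via the product formula: the caret then lands at a unique position inside a single block $j$, determined by the cumulative permutation; extend $\Lambda_j'$ by the resulting caret-addition step to form $\Lambda_j$, keep $\Lambda_i\defeq\Lambda_i'$ for $i\ne j$, and sweep all cross-block debris into an updated $h$.

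The main obstacle lies in this inductive step: cloning the $H$-valued tuple of $(h')^{-1}$ via $\clone_{k_r}^M$ introduces new entries that are \emph{states} of elements of $H$, and these states need not themselves lie in $H$. The resolution is to observe that such out-of-$H$ entries are exactly the sort that appear in the compiled form of any splitting (because the defining product of a splitting involves cloning inner $H$-tuples), so they fit naturally inside $\Lambda_j$ rather than obstructing the decomposition; any cross-block permutation debris gets absorbed into the $S_q$-factor of $h\in[1_q,S_q\wr H,1_q]$, which is free to be any element of $S_q$ independently of the $H^q$ tuple. Finally, the ``in particular'' clause of the lemma follows immediately from the first statement, since left multiplication by the fixed $x\in\Groupoid{S_*\wr G}$ commutes with the right action defining $\sim_H$ and preserves the splitting relation, hence induces an order isomorphism on the relevant upper-bound sets.
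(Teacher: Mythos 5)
Your proof is correct and takes essentially the same route as the paper's: both directions come down to the facts that a direct sum of splittings is a splitting and that, modulo $\sim_H$, any splitting decomposes as a direct sum of one-head splittings, which the paper asserts in a single sentence where you supply the induction on length. Your resolution of the main subtlety (cloned entries being states of elements of $H$ rather than elements of $H$) is sound, since the cloned block together with its permutation $\rho^{(k)}(h_{k_r})$ is exactly the compiled form of the one-head splitting $[1_1,h_{k_r},1_1][\vertwedge_1^1,\vv{\id},1_d]$ up to right multiplication by an element of $S_d\wr H$.
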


\begin{proof}
 If $[x_i]_H\le [y_i]_H$ then $y_i$ is obtained from $x_i$ via right multiplication by a splitting, and a direct sum of splittings is itself a splitting, so it is clear that every such $[y_1\oplus\cdots\oplus y_n]_H$ is an upper bound of $[x_1\oplus\cdots\oplus x_n]_H$. Conversely, any upper bound of $[x_1\oplus\cdots\oplus x_n]_H$ is obtained by right multiplying $x_1\oplus\cdots\oplus x_n$ by a splitting and then passing to the $\sim_H$-equivalence class, but any splitting is $\sim_H$-equivalent to a direct sum of $1$-head splittings, each of which serves to pass to an upper bound of a single $[x_i]_H$. Hence every upper bound is of the desired form.

 The second claim follows simply because the left action of $\Groupoid{S_*\wr G}$ on itself preserves $\sim_H$-equivalence classes and respects the poset relation $\le$.
\end{proof}

\begin{corollary}\label{cor:decomp_split}
 Any $m$-head splitting $\Lambda$ satisfies $[\Lambda]_H=[\Lambda_1\oplus\cdots\oplus\Lambda_m]_H$ for some $1$-head splittings $\Lambda_1,\dots,\Lambda_m$.
\end{corollary}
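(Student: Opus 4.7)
The plan is to deduce the corollary directly from Lemma~\ref{lem:decomp_up_bd} by specializing to the trivial decomposition of the identity groupoid element. First I would note that the groupoid identity $[1_m,\vv{\id},1_m]$ coincides on the nose with the $m$-fold direct sum $1_1\oplus\cdots\oplus 1_1$ (both sides have head forest $1_m$, foot forest $1_m$, and trivial group entry), and that every $m$-head splitting $\Lambda$ is an upper bound of $[1_m]_H$ in $P_H$: since $1_m$ acts as the identity on $m$-headed elements of $\Groupoid{S_*\wr G}$, we have $1_m^{-1}\cdot\Lambda=\Lambda$, which is a splitting by hypothesis, so $[1_m]_H\le[\Lambda]_H$.

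Applying Lemma~\ref{lem:decomp_up_bd} with each $x_i$ equal to $1_1$ and with $[\Lambda]_H$ as the upper bound of $[1_1\oplus\cdots\oplus 1_1]_H=[1_m]_H$, I would obtain $y_1,\dots,y_m\in\Groupoid{S_*\wr G}$ satisfying $[\Lambda]_H=[y_1\oplus\cdots\oplus y_m]_H$ and $[1_1]_H\le[y_i]_H$ for every $i$. Unpacking this latter inequality, it says that $1_1^{-1}\cdot y_i=y_i$ is a splitting starting from a single head, i.e., a $1$-head splitting in the sense of Definition~\ref{def:splitting}. Setting $\Lambda_i\defeq y_i$ then gives the required decomposition. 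I do not anticipate any real obstacle here, since the corollary is essentially Lemma~\ref{lem:decomp_up_bd} applied to the trivial factorization of the identity $1_m$; the only item worth checking carefully is the on-the-nose identification $1_m=1_1\oplus\cdots\oplus 1_1$ of groupoid elements, which is immediate from the definition of $\oplus$.
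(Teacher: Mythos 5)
Your argument is exactly the paper's proof: both deduce the corollary from Lemma~\ref{lem:decomp_up_bd} by writing $1_m=1_1\oplus\cdots\oplus 1_1$, observing $[1_m]_H\le[\Lambda]_H$, and using that $[1_1]_H\le[y]_H$ precisely when $y$ is a $1$-head splitting. The proposal is correct and no further checking is needed.
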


\begin{proof}
 We have $[1_m]_H\le[\Lambda]_H$, $1_m=1_1\oplus\cdots\oplus1_1$ with $m$ summands, and $[1_1]_H\le[x]_H$ if and only if $x$ is a $1$-head splitting, so this follows from Lemma~\ref{lem:decomp_up_bd}.
\end{proof}

See Figure~\ref{fig:direct_decomp} for an example using the splitting from Figure~\ref{fig:splitting}.

\begin{figure}[htb]
 \centering
 \begin{tikzpicture}[line width=0.8pt]
  \draw[->] (0,0) -- (0,.5);   \draw[->] (2,0) -- (2,.5);
	\node at (0,-.25) {$h_2$}; \node at (2,-.25) {$h_1$};
	\draw (-.5,-1) -- (0,-.5) -- (.5,-1)   (1.5,-1) -- (2,-.5) -- (2.5,-1);
 \end{tikzpicture}
 \caption{The splitting from Figure~\ref{fig:splitting} is equivalent under $\sim_H$ to this splitting of the form $\Lambda_1\oplus \Lambda_2$, where $\Lambda_1=[1_1,h_2,1_1][\protect\vertwedge_1^1,(\id,\id),1_2]$ and $\Lambda_2=[1_1,h_1,1_1][\protect\vertwedge_1^1,(\id,\id),1_2]$.}
 \label{fig:direct_decomp}
\end{figure}
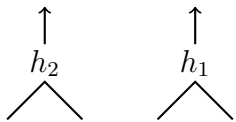

For the rest of the section we will assume $H$ is $A$-coarsely self-similar for some finite set of artifacts $A$, and nuclear in $G$. Note that $A$-coarsely self-similar implies $(A\setminus H)$-coarsely self-similar, and also for $A\subseteq B$ it is clear that $A$-coarsely self-similar implies $B$-coarsely self-similar, so without loss of generality $\id\in A$ and $H\cap A=\{\id\}$. Also, $A$-coarsely self-similar implies $(A\cap G)$-coarsely self-similar (since $G$ is self-similar), so without loss of generality we can assume $A\subseteq G$. We will make these assumptions on $A$ throughout this section. We will also assume $H$ and $A$ satisfy the following addition assumption.

\begin{definition}[Orderly]\label{def:orderly}
 Let $A$ be a finite subset of $V_d\cap\Aut(\tree_d)$. Call $(H,A)$ \emph{orderly} if the following conditions hold:
\begin{enumerate}
 \item The wreath recursion of any $a\in A$ is $a=\rho(a)(\id,\dots,\id)$,
 \item The wreath recursion of any $h\in H$ is $h=\rho(h)(h_1,\dots,h_d)$ for $h_{\rho(h)(1)}\in A\cup H$ and $h_i\in H$ for all $i\ne \rho(h)(1)$,
 \item If $a,b\in A$ with $a\ne b$ then $\rho(a)(1)\ne\rho(b)(1)$.
\end{enumerate}
\end{definition}

This convoluted definition is the most general one that we found to get all the coming results to work, but the working example to keep in mind is the following.

\begin{example}\label{ex:orderly}
 Let $a=[\vertwedge_1^1,(1\cdots d),\vertwedge_1^1]$ and let $A=\{a,a^2,\dots,a^d\}$. Suppose $H$ is such that the wreath recursion of any $h\in H$ is of the form $h=(a^i,h_2,\dots,h_d)$ for some $i$ and some $h_2,\dots,h_d\in H$. Then $(H,A)$ is orderly by construction.
\end{example}

In general note that if $(H,A)$ is orderly then for any $h\in H$ we have $[1_1,h,1_1][\vertwedge_1^1,\vv{\id},1_d]_H=[\vertwedge_1^1,(a,\id,\dots,\id),1_d]_H$ for some $a\in A$. The fact that the artifact must appear in the first entry is important, and is the motivation behind the second condition in Definition~\ref{def:orderly}. In general we will sometimes leave out trivial factors and use superscripts like $~^{(k)}$ to indicate what belongs in which entry, so for example $[\vertwedge_1^1,(a,\id,\dots,\id),1_d]_H$ could be denoted $[\vertwedge_1^1,a^{(1)},1_d]_H$.

The key result that requires $(H,A)$ to be orderly is the following.

\begin{lemma}[Least upper bound of artifacts]\label{lem:perm_lubs}
 For $a_1,\dots,a_r\in A$, the elements $[a_i]_H$ of $P_H^1$ have a unique least upper bound in $P_H^1$. More precisely, if $r\ge 2$ then $[\vertwedge_1^1,\vv{\id},1_d]_H$ is the least upper bound of the $[a_i]_H$.
\end{lemma}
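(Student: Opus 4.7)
I would begin by verifying that $[\vertwedge_1^1, \vv{\id}, 1_d]_H$ is an upper bound of $[a]_H$ for every $a \in A$. By orderly condition (i), the wreath recursion of $a$ is $a = \rho(a)(\id,\dots,\id)$, so a direct computation using $\clone_1^1$ gives $[1_1, a, 1_1][\vertwedge_1^1, \vv{\id}, 1_d] = [\vertwedge_1^1, \rho(a)(\id,\dots,\id), 1_d]$, and since $\rho(a)(\id,\dots,\id) \in S_d \wr H$ this lies in the coset $[\vertwedge_1^1, \vv{\id}, 1_d]_H$. Equivalently, $a^{-1} \cdot [\vertwedge_1^1, \vv{\id}, 1_d]$ coincides with the length-one splitting $[1_1, \id, 1_1][\vertwedge_1^1, \rho(a)^{-1}\vv{\id}, 1_d]$, so $[a]_H \le [\vertwedge_1^1, \vv{\id}, 1_d]_H$.

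The $r = 1$ case of the LUB assertion is trivial. For $r \ge 2$, assume without loss of generality that the $[a_i]_H$ are pairwise distinct in $P_H^1$. Let $[y]_H$ be any upper bound. A preliminary reduction forces $\feet([y]_H) \ge d$: if $\feet([y]_H) = 1$, then $[y]_H = [1_1, g, 1_1]_H$ for some $g \in G$, and each $[a_i]_H \le [y]_H$ forces $a_i^{-1} g \in H$, so picking two distinct indices yields $[a_1]_H = [a_2]_H$, a contradiction. Uniqueness of the LUB will follow once we show $[\vertwedge_1^1, \vv{\id}, 1_d]_H \le [y]_H$ for every such upper bound.

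The heart of the argument is the case $\feet([y]_H) = d$, where $y$ admits the representative $[\vertwedge_1^1, \sigma\vec{g}, 1_d]$ with $\sigma \in S_d$ and $\vec{g} = (g_1,\dots,g_d) \in \Aut(\tree_d)^d$; the goal is $\vec{g} \in H^d$, which then gives $[y]_H = [\vertwedge_1^1, \vv{\id}, 1_d]_H$. For each $i$, the condition that $a_i^{-1} y$ is a length-one splitting unpacks into matching a chosen $h_0 \in H$ and $\sigma_1\vec{h}_1 \in S_d \wr H$ against the wreath decomposition of $\rho(a_i)^{-1}\sigma\vec{g}$. Orderly condition (ii) restricts the only tuple entry of $h_0$ allowed outside $H$ to the position $\rho(h_0)(1)$, and translating through the tuple-matching identities forces at most one entry of $\vec{g}$ to lie outside $H$ (in $AH$), at a specific position $j_i^*$ depending on $\sigma$, $\rho(a_i)$, and $\rho(h_0)$. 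If some $g_{j^*} \notin H$, then $j_i^* = j^*$ for every $i$; combining this equality across two distinct $[a_i]_H$'s and invoking orderly condition (iii) yields a contradiction, so $\vec{g} \in H^d$.

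Finally, for $\feet([y]_H) > d$, the head tree $T_-$ of $y$ has more than one leaf and therefore begins with a $d$-caret, allowing the factorization $y = [\vertwedge_1^1, \vv{\id}, 1_d] \cdot z$ with $z \in \Groupoid{S_*\wr G}$ having $d$ heads. The upper bound conditions $[a_i]_H \le [y]_H$ descend through this factorization, and combining the $\feet = d$ analysis with Lemma~\ref{lem:decomp_up_bd} applied at each of the $d$ heads of $z$ shows $[z]_H$ is $\sim_H$-equivalent to a splitting of $1_d$, so $[\vertwedge_1^1, \vv{\id}, 1_d]_H \le [y]_H$. The main obstacle is the bookkeeping in the $\feet = d$ case: correctly tracking how the artifact position propagates through the wreath-product multiplication, and verifying that orderly conditions (ii) and (iii) together prevent the positions $j_1^*$ and $j_2^*$ from coinciding when $[a_1]_H \ne [a_2]_H$.
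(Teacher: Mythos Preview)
Your outline has the right shape, but there is a genuine gap in the $\feet = d$ analysis, and the $\feet > d$ reduction is too vague to carry weight.

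In the $\feet = d$ case you correctly find that $g_j \in H$ except possibly at the single index $j_i^* = \sigma_1^{-1}(\rho(h_0)(1))$, where $\sigma_1 = \rho(h_0)^{-1}\rho(a_i)^{-1}\sigma$; unravelling gives $j_i^* = \sigma^{-1}\bigl(\rho(a_i)(\rho(h_0)^2(1))\bigr)$. The problem is that $h_0$ is not fixed: it depends on $i$. So the equality $j_1^* = j_2^*$ only yields $\rho(a_1)\bigl(\rho(h_0^{(1)})^2(1)\bigr) = \rho(a_2)\bigl(\rho(h_0^{(2)})^2(1)\bigr)$, and orderly condition (iii), which compares $\rho(a)(1)$ and $\rho(b)(1)$, does not apply. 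You have not shown that the argument of $\rho(a_i)$ here is actually $1$, and nothing in your setup forces it to be. Similarly, your $\feet > d$ step asserts that the $\feet = d$ analysis plus Lemma~\ref{lem:decomp_up_bd} shows $[z]_H$ is a splitting of $1_d$, but the $\feet = d$ analysis was about $1$-head elements while $z$ has $d$ heads, and you do not explain how to transfer the conclusion.

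The paper avoids both problems by working at the level of $\sim_H$-classes from the start. The key preliminary observation (made just after Example~\ref{ex:orderly}) is that for any $h \in H$ one has $[1_1,h,1_1][\vertwedge_1^1,\vv{\id},1_d]_H = [\vertwedge_1^1,a^{(1)},1_d]_H$ for some $a \in A$: the artifact always lands in the \emph{first} slot, regardless of $h$. Consequently every length-$1$ splitting of $[a_i]_H$ equals $[\vertwedge_1^1,a^{(\rho(a_i)(1))},1_d]_H$ for some $a \in A$, with artifact position $\rho(a_i)(1)$ depending only on $a_i$. Now Lemma~\ref{lem:decomp_up_bd} describes \emph{all} upper bounds of such an element uniformly (no case split on $\feet$): they are $[\vertwedge_1^1,\vv{\id},1_d][\Lambda_1\oplus\cdots\oplus\Lambda_d]_H$ with $[1_1]_H \le [\Lambda_j]_H$ for $j \ne \rho(a_i)(1)$. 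Comparing the two descriptions for $i=1,2$ and invoking orderly (iii) then finishes the argument directly. This is exactly the missing step in your approach: you need to eliminate the dependence on $h_0$ \emph{before} comparing artifact positions, and the cleanest way to do that is to pass to the $\sim_H$-class first.
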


\begin{proof}
 If $r=1$ then $[a_1]_H$ is its own least upper bound, so assume $r\ge2$. Since $(H,A)$ is orderly, for each $1\le i\le r$ we have $[a_i [\vertwedge_1^1,\vv{\id},1_d]]_H=[\vertwedge_1^1,\vv{\id},1_d]_H$, so this is in fact an upper bound of each $[a_i]_H$. We need to show that every common upper bound of $[a_1]_H,\dots,[a_r]_H$ is an upper bound of $[\vertwedge_1^1,\vv{\id},1_d]_H$. It is clear that if our claim holds in the $r=2$ case then it holds for all $r\ge2$, so we can assume $r=2$.

 First note that any strict upper bound of $[a_1]_H$ is an upper bound of some length-$1$ splitting of $[a_1]_H$, i.e., an element of the form $[a_1 h [\vertwedge_1^1,\vv{\id},1_d]]_H$ for some $h\in H$. Since $(H,A)$ is orderly, such an element equals $[a_1[\vertwedge_1^1,a^{(1)},1_d]]_H$ for some $a\in A$. As an element of $\Thomp{S_*\wr G}$ we have $a_1=[\vertwedge_1^1,\rho(a_1),\vertwedge_1^1]$, and hence $[a_1[\vertwedge_1^1,a^{(1)},1_d]]_H = [\vertwedge_1^1,\rho(a_1)a^{(1)},1_d]_H = [\vertwedge_1^1,a^{(\rho(a_1)(1))},1_d]_H$. By Lemma~\ref{lem:decomp_up_bd}, any upper bound of this is of the form $[\vertwedge_1^1,\vv{\id},1_d][\Lambda_1\oplus\cdots\oplus\Lambda_d]_H$ for some $1$-head splittings $\Lambda_i$ such that $[1_1]\le [\Lambda_i]_H$ for all $i\ne \rho(a_1)(1)$ and $[a]_H\le [\Lambda_{\rho(a_1)(1)}]_H$.

Doing a similar procedure for $a_2$ we get that any strict upper bound of $[a_2]_H$ is an upper bound of $[\vertwedge_1^1,b^{(\rho(a_2)(1))}),1_d]_H$ for some $b\in A$. Such an upper bound must be of the form $[\vertwedge_1^1,\vv{\id},1_d][\Lambda_1\oplus\cdots\oplus\Lambda_d]_H$ for some $1$-head splittings $\Lambda_i$ such that $[1_1]\le [\Lambda_i]_H$ for all $i\ne \rho(a_2)(1)$ and $[b]_H\le [\Lambda_{\rho(a_2)(1)}]_H$. If this is also an upper bound of $[a_1]_H$ then either $[1_1]_H\le[\Lambda_i]_H$ for all $i$, in which case it is an upper bound of $[\vertwedge_1^1,\vv{\id},1_d]_H$ and we are done, or else $\rho(a_1)(1)=\rho(a_2)(1)$. But this is ruled out in the definition of orderly, since $a_1\ne a_2$.
\end{proof}

\begin{remark}
 If $(H,A)$ is not orderly then Lemma~\ref{lem:perm_lubs} might not hold, i.e., minimal upper bounds might fail to be unique. For example, in the Gupta--Sidki group for $d=3$, generated by $a=(1~2~3)$ and $b=(a,a^{-1},b)$, the artifact $a$ can appear in either the first or second spot, since $b^{-1}=(a^{-1},a,b^{-1})$. This shows $(\langle b\rangle,\langle a\rangle)$ is not orderly, and in fact $[1_1]_{\langle b\rangle}$ and $[a]_{\langle b\rangle}$ do not have a unique least upper bound in $P_{\langle b\rangle}^1$: both $[\vertwedge_1^1,\vv{\id},1_3]_{\langle b\rangle}$ and $[\vertwedge_1^1,(\id,a,\id),1_3][\vertwedge_3^3,\vv{\id},1_5][\vertwedge_1^5,\vv{\id},1_7]_{\langle b\rangle}$ are minimal common upper bounds of $[1_1]_{\langle b\rangle}$ and $[a]_{\langle b\rangle}$.
\end{remark}

Next we show that if $(H,A)$ is orderly then any finite collection of length-$1$ splittings of a given $[x]_H$ have a unique least upper bound.

\begin{proposition}\label{prop:lubs}
 Any finite collection of length-$1$ splittings of some $[x]_H$ have a unique least upper bound in $P_H$.
\end{proposition}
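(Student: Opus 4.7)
The plan is to reduce the global question to a purely local one about length-$1$ $1$-head splittings of $[1_1]_H$ via the direct-sum machinery of Lemma~\ref{lem:decomp_up_bd} and Corollary~\ref{cor:decomp_split}, and then to dispatch the local case using the orderly hypothesis in the spirit of Lemma~\ref{lem:perm_lubs}. First I would fix a representative $x\in P_n$ of $[x]_H$. By Corollary~\ref{cor:decomp_split}, each length-$1$ splitting $[y^{(i)}]_H$ of $[x]_H$ admits, modulo $\sim_H$, a decomposition
\[
[y^{(i)}]_H = [x\,(1_1\oplus\cdots\oplus\Lambda^{(i)}\oplus\cdots\oplus 1_1)]_H
\]
in which exactly one summand $\Lambda^{(i)}$ is a length-$1$ $1$-head splitting at some position $k_i\in\{1,\ldots,n\}$, the rest being trivial. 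Grouping indices as $I_j=\{i:k_i=j\}$ and applying Lemma~\ref{lem:decomp_up_bd}, the common upper bounds of $\{[y^{(i)}]_H\}$ are precisely the elements $[x\,(w_1\oplus\cdots\oplus w_n)]_H$, where each $[w_j]_H$ is a common upper bound in $P_H^1$ of $\{[\Lambda^{(i)}]_H:i\in I_j\}$ (with $w_j=1_1$ when $I_j=\varnothing$). A unique least upper bound exists globally if and only if one exists at every position, which reduces the claim to showing that any finite collection of length-$1$ $1$-head splittings of $[1_1]_H$ has a unique LUB in $P_H^1$.

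For the local claim, I would reuse the absorption computation from the proof of Lemma~\ref{lem:perm_lubs}, together with orderly conditions~(i) and~(ii), to show that every length-$1$ $1$-head splitting of $[1_1]_H$ is $\sim_H$-equivalent to $[\vertwedge_1^1,\,a^{(1)},\,1_d]_H$ for a unique $a\in A$, so a finite collection is parametrized by a finite $A'\subseteq A$. The case $|A'|\le 1$ is tautological. When $|A'|\ge 2$, the cloning formula applied at position $1$, combined with orderly condition~(i) (which gives $a=\rho(a)\vv{\id}$ and hence $(a^{(1)})\clone_1^d=\rho^{(1)}(a)\vv{\id}\in S_{2d-1}\wr H$), shows that expanding each $[\vertwedge_1^1,\,a^{(1)},\,1_d]_H$ at position $1$ produces the common element $[\widetilde{T},\,\vv{\id},\,\widetilde{F}]_H$, where $\widetilde{T}$ and $\widetilde{F}$ denote $\vertwedge_1^1$ and $1_d$ each with a further $d$-caret added at position $1$. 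This is an explicit length-$2$ common upper bound, independent of $a\in A'$.

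The main obstacle is uniqueness, namely that this explicit element is minimal among all common upper bounds. I would tackle this by applying Corollary~\ref{cor:decomp_split} one level deeper: any common upper bound refines each $[\vertwedge_1^1,\,a^{(1)},\,1_d]_H$ by a splitting of length at least one, and after peeling off one $d$-caret, orderly condition~(iii) (which forces distinct artifacts to act distinctly on the first child) prevents any refinement other than one at position $1$ from simultaneously resolving two distinct $a\in A'$. Hence every common upper bound dominates $[\widetilde{T},\,\vv{\id},\,\widetilde{F}]_H$, giving the local uniqueness; reassembling via Lemma~\ref{lem:decomp_up_bd} then completes the proof.
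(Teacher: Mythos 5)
Your overall route is the same as the paper's: use orderliness and coarse self-similarity to normalize each length-$1$ splitting into the form $[\vertwedge_k^m,a^{(k)},1_{m+d-1}]_H$ with the artifact sitting on the first new foot, group the splittings by the foot $k$ they act on, compute the least upper bound position-by-position via the artifact lemma, and reassemble with Lemma~\ref{lem:decomp_up_bd}. The only substantive difference is presentational: you reduce all the way to a local statement about $[1_1]_H$ and then re-derive the content of Lemma~\ref{lem:perm_lubs}, whereas the paper works over $1_m$ and simply cites that lemma (via Lemma~\ref{lem:decomp_up_bd}) for the per-position least upper bounds; citing it is cleaner and is available to you.

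One step is mis-written in a way worth correcting, because it obscures exactly why the second caret is needed. The element you name as the common upper bound, $[\widetilde T,\vv{\id},\widetilde F]_H$ with $\widetilde F$ equal to $1_d$ with a $d$-caret on its first leaf, has only $d$ feet; in fact $[\widetilde T,\vv{\id},\widetilde F]$ reduces (cancel the common caret) to $[\vertwedge_1^1,\vv{\id},1_d]$, which is not an upper bound of $[\vertwedge_1^1,a^{(1)},1_d]_H$ for $a\ne\id$. Relatedly, merely expanding the triple $(\vertwedge_1^1,a^{(1)},1_d)$ at its first leaf does not move you up in the poset at all: it produces the representative $[\widetilde T,\rho^{(1)}(a)\vv{\id},\widetilde F]$ of the \emph{same} coset, and the permutation $\rho^{(1)}(a)$ cannot be absorbed into $[1_d,S_d\wr H,1_d]$ because it permutes leaves of $\widetilde F$ rather than its roots. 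What you want is the genuine length-$2$ splitting $[\vertwedge_1^1,\vv{\id},1_d][\vertwedge_1^d,\vv{\id},1_{2d-1}]_H=[\widetilde T,\vv{\id},1_{2d-1}]_H$, which has $2d-1$ feet: only after the second split does every new leaf become a foot, so that $[\vertwedge_1^1,a^{(1)},1_d][\vertwedge_1^d,\vv{\id},1_{2d-1}]_H=[\widetilde T,\rho^{(1)}(a)\vv{\id},1_{2d-1}]_H=[\widetilde T,\vv{\id},1_{2d-1}]_H$ independently of $a$. Your phrase ``length-$2$ common upper bound'' shows you intend this element, so the fix is notational, but as displayed the element is wrong; with that repaired, and with minimality obtained by quoting Lemma~\ref{lem:perm_lubs} rather than re-sketching it, your argument matches the paper's.
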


\begin{proof}
 Since the action of $\Groupoid{S_*\wr G}$ from the left preserves the poset structure, we can assume $x=1_m$ (where $m$ is the number of feet of the original $x$). Any length-$1$ splitting of $[1_m]_H$ is of the form $[1_m,\sigma\vv{h},1_m][\vertwedge_\ell^m,\vv{\id},1_{m+d-1}]_H$ for some $\sigma\vv{h}\in S_m\wr H$ and some $\ell$, and since $H$ is $A$-coarsely self-similar and $(H,A)$ is orderly this equals $[\vertwedge_k^m,a^{(k)},1_{m+d-1}]_H$ for $k=\sigma(\ell)$ and some $a\in A$ (recall that $a^{(k)}$ indicates that $a$ appears in the $(k)$th entry of the tuple and the other entries are trivial). Note that the $k$th entry corresponds to the first leaf of the $d$-caret in $\vertwedge_k^m$.

 Now suppose $[\vertwedge_{k_j}^m,(a_j)^{(k_j)},1_{m+d-1}]_H$, $1\le j\le r$ are a collection of length-$1$ splittings of $[1_m]_H$. Without loss of generality $k_1\le\cdots\le k_r$. For any string of indices $j,j+1,\dots,j+s$ $(s\ge 1$) with $k_j=\cdots=k_{j+s}\eqdef k$, the elements $[\vertwedge_k^m,(a_j)^{(k)},1_{m+d-1}]_H,\dots,[\vertwedge_k^m,(a_{j+s})^{(k)},1_{m+d-1}]_H$ have a unique least upper bound, namely $[\vertwedge_k^m,\vv{\id},1_{m+d-1}][\vertwedge_k^{m+d-1},\vv{\id},1_{m+2(d-1)}]_H$. This is true because $[\vertwedge_k^{m+d-1},\vv{\id},1_{m+2(d-1)}]_H$ is the least upper bound of the $[1_{m+d-1},(a_{j+i})^{(k)},1_{m+d-1}]_H$ ($0\le i\le s$) by Lemma~\ref{lem:perm_lubs}, and then Lemma~\ref{lem:decomp_up_bd} says $[\vertwedge_k^m,\vv{\id},1_{m+d-1}][\vertwedge_k^{m+d-1},\vv{\id},1_{m+2(d-1)}]_H$ is consequently the least upper bound of the $[\vertwedge_k^m,(a_{j+i})^{(k)},1_{m+d-1}]_H$ ($0\le i\le s$).

Replacing any such subcollection of our length-$1$ splittings by their least upper bound we get a new collection of elements of $P_H^m$ with the same least upper bound as the original collection. Now our elements are of the form $[U_j^{(k_j)}]_H$ ($1\le j\le t$ for some $t$), satisfying $k_1<\cdots<k_t$, with each $U_j$ either of the form $[\vertwedge_1^1,(a_j)^{(1)},1_d]$ for some $a_j\in A$ or else of the form $[\vertwedge_1^1,\vv{\id},1_d][\vertwedge_1^d,\vv{\id},1_{2d-1}]_H$. Here $U_j^{(k_j)}$ means $1_1\oplus\cdots\oplus 1_1\oplus U_j \oplus 1_1\oplus\cdots\oplus 1_1$ with $U_j$ in the $k_j$th spot. In particular Lemma~\ref{lem:decomp_up_bd} says that every common upper bound of the $[U_j^{(k_j)}]_H$ is an upper bound of $[U_1^{(k_1)}\oplus\cdots\oplus U_t^{(k_t)}]_H$, where we do not write trivial direct summands. This is certainly itself a common upper bound, so it is the least upper bound.
\end{proof}

Note that the proof of Proposition~\ref{prop:lubs} is constructive, and the least upper bound obtained in the end has a rather specific form. Namely, it amounts to first applying length-$1$ single-head splittings to some of the feet of $x$, creating new feet in clumps of $d$ many, potentially with artifacts on the leftmost new foot of a given clump, and then possibly applying $[\vertwedge_1^1,\vv{\id},1_d]$ to the leftmost foot of some clumps. While every least upper bound of some collection of length-$1$ splittings is of this form, not every splitting of this form is necessarily a least upper bound of some such collection.

Now we can finally start defining the $H$-Stein--Farley complex. For $[x]_H\le [y]_H$ in $P_H^m$, define the \emph{closed interval} and \emph{open interval}
$$[[x]_H,[y]_H]\defeq \{[z]_H \mid [x]_H\le [z]_H\le [y]_H\} \text{ and } ([x]_H,[y]_H)\defeq \{[z]_H \mid [x]_H< [z]_H< [y]_H\}\text{,}$$
with half-open intervals defined analogously. The geometric realizations $|[[x]_H,[y]_H]|$ and $|([x]_H,[y]_H)|$ are natural subcomplexes of $|P_H^m|$. The \emph{length} of a closed interval $|[[x]_H,[y]_H]|$ is the length of the splitting $x^{-1}y$.

\begin{definition}[Elementary core]
 Let $[x]_H<[y]_H$ in $P_H^m$. Define the \emph{elementary core} of the closed interval $I=[[x]_H,[y]_H]$ to be
$$\core(I)\defeq lub(I\cap\{[x\Omega]_H \mid \Omega\text{ is a length-$1$ splitting of }x\})\text{.}$$
Also declare that $\core(\{[x]_H\})=[x]_H$ for any $[x]_H$. Proposition~\ref{prop:lubs} ensures that every closed interval in $P_H^m$ has a unique elementary core. Since $[y]_H$ is an upper bound of everything in $I$, the elementary core of $I$ lies in $I$.
\end{definition}

\begin{definition}[Elementary interval/splitting, $\preceq$]
 For $I=[[x]_H,[y]_H]$, if $\core(I)=[y]_H$ call $I$ an \emph{elementary interval}. For any $[x]_H\le [y]_H$ in $P_H^m$, if $[x]_H$ and $[y]_H$ lie in a common elementary interval then write $[x]_H\preceq [y]_H$. If $[x]_H\preceq [y]_H$ and $[x]_H\ne [y]_H$ write $[x]_H\prec [y]_H$. Whenever $[x]_H\preceq [x\Lambda]_H$ for $\Lambda$ a splitting, we call $\Lambda$ an \emph{elementary splitting (of $x$)}.
\end{definition}

Since we know what the least upper bounds in the proof of Proposition~\ref{prop:lubs} look like, we also now know what elementary splittings of $x$ look like. An elementary splitting of $x$ consists of first applying length-$1$ single-head splits to some of the feet of $x$ and then possibly applying the single split $[\vertwedge_1^1,\vv{\id},1_d]$ to the leftmost new foot in some of the clumps of $d$ new feet.

\begin{definition}[$H$-Stein--Farley complex]
 Let $\Stein{S_*\wr G}_H$ be the subcomplex of $|P_H^1|$ consisting only of those chains $[x_0]_H<\cdots<[x_k]_H$ such that $[x_0]_H \prec [x_k]_H$, i.e., such that the chain lies in an elementary interval. Call such a chain/simplex \emph{elementary}, so $\Stein{S_*\wr G}_H$ is the subcomplex of elementary simplices. Note that a subchain of an elementary chain is elementary, so this really is a subcomplex.
\end{definition}

When $H=G$ and $d=2$, $\Stein{S_*\wr G}_G$ is the complex called the \emph{Stein--Farley complex} in \cite{witzel18}, and has the structure of a cube complex. (The name comes from work of Stein \cite{stein92} and Farley \cite{farley03} first establishing these complexes for the classical Thompson groups.) In \cite{belk16} Belk and Matucci use what we would call the $H$-Stein--Farley complex for the R\"over group, using a Klein-$4$ subgroup as $H$, to prove that the R\"over group is $\F_\infty$. Belk and Matucci do not phrase elementary cores as being least upper bounds of length-$1$ subsplittings, but the definitions do coincide in the R\"over group case, as the work in the proof of \cite[Lemma~4.7]{belk16} amounts to proving that the ``double splitting'' is the least upper bound of the two single splittings. As a remark, if $(H,A)$ is not orderly, for example when $G$ is the Gupta--Sidki group, it is difficult to say what an analog of the $H$-Stein--Farley complex should look like, since we would not necessarily have well defined elementary cores.

Before we prove that $|P_H^1|\simeq \Stein{S_*\wr G}_H$ (Proposition~\ref{prop:poset_stein}) we need to collect some observations and notation.

For $[x]_H\in P_H^1$ define $(P_H^1)^{[x]_H \le} \defeq \{[y]_H\in P_H^1\mid [x]_H\le [y]_H\}$. Define a function $\core_{[x]_H} \colon (P_H^1)^{[x]_H \le} \to (P_H^1)^{[x]_H \le}$ by sending $[y]_H$ to $\core([[x]_H,[y]_H])$.

\begin{observation}\label{obs:poset_map}
 For $[x]_H \in P_H^1$, the function $\core_{[x]_H}\colon (P_H^1)^{[x]_H \le}\to (P_H^1)^{[x]_H \le}$ is a poset map.
\end{observation}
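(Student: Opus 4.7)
The plan is to unwind the definition of the elementary core and reduce monotonicity to an essentially formal fact about least upper bounds. Suppose $[x]_H \le [y]_H \le [z]_H$ in $P_H^1$; I want to show $\core([[x]_H,[y]_H]) \le \core([[x]_H,[z]_H])$.

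For any $[w]_H \ge [x]_H$ set
\[ S_{[w]_H} \defeq [[x]_H,[w]_H] \cap \{[x\Omega]_H \mid \Omega \text{ is a length-}1\text{ splitting of } x\}, \]
so that by definition $\core([[x]_H,[w]_H])$ is the least upper bound of $S_{[w]_H}$. Existence and uniqueness of this $\mathrm{lub}$ is exactly Proposition~\ref{prop:lubs} (with the separate convention $\core(\{[x]_H\})=[x]_H$ covering the degenerate case $[w]_H=[x]_H$, in which $S_{[w]_H}$ is empty after removing $[x]_H$ itself).

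The assumption $[y]_H \le [z]_H$ gives $S_{[y]_H}\subseteq S_{[z]_H}$ by transitivity of $\le$: if $\Omega$ is a length-$1$ splitting of $x$ with $[x\Omega]_H\le[y]_H$, then $[x\Omega]_H\le[z]_H$ as well. Since $\core([[x]_H,[z]_H])$ is an upper bound of $S_{[z]_H}$, it is automatically an upper bound of the smaller set $S_{[y]_H}$. By the defining property of the least upper bound, $\core([[x]_H,[y]_H])\le \core([[x]_H,[z]_H])$, as required. The degenerate case $[y]_H=[x]_H$ reduces to observing that $\core([[x]_H,[z]_H])\ge[x]_H$ because the core always lies in the interval it describes.

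I do not anticipate any real obstacle; the observation is a soft consequence of Proposition~\ref{prop:lubs}. The only care needed is in bookkeeping the edge cases of empty or singleton intervals so that the $\mathrm{lub}$'s referenced always exist, which is exactly why the authors include the explicit convention for $\core(\{[x]_H\})$.
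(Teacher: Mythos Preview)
Your proof is correct and follows essentially the same route as the paper's: the paper's argument is the one-liner ``$[[x]_H,[y]_H]\subseteq[[x]_H,[z]_H]$ implies $\core([[x]_H,[y]_H])\le\core([[x]_H,[z]_H])$,'' and you have simply unpacked this by making the underlying least-upper-bound reasoning explicit and handling the degenerate singleton interval carefully.
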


\begin{proof}
 Let $[y]_H,[z]_H\in (P_H^1)^{[x]_H \le}$ with $[y]_H\le [z]_H$. Since $[[x]_H,[y]_H]\subseteq [[x]_H,[z]_H]$ we have $\core([[x]_H,[y]_H])\le\core([[x]_H,[z]_H])$, and hence $\core_{[x]_H}([y]_H)\le \core_{[x]_H}([z]_H)$ as desired.
\end{proof}

\begin{proposition}\label{prop:poset_stein}
 If $H$ is $A$-coarsely self-similar for some finite $A\subseteq V_d\cap\Aut(\tree_d)$ and $(H,A)$ is orderly, then $|P_H^1|\simeq \Stein{S_*\wr G}_H$.
\end{proposition}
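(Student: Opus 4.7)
The plan is to exhibit $\Stein{S_*\wr G}_H$ as a deformation retract of $|P_H^1|$, following the approach of Belk and Matucci \cite{belk16} for the R\"over group. The strategy is discrete Morse theory: I will construct a matching on the simplices of $|P_H^1|$ whose critical cells are precisely the simplices of $\Stein{S_*\wr G}_H$.

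For each non-elementary simplex $\sigma=([x_0]_H<\cdots<[x_k]_H)$, set $[c]_H\defeq\core([[x_0]_H,[x_k]_H])$; by non-elementariness, $[x_0]_H<[c]_H<[x_k]_H$. I would then match $\sigma$ with its companion $\sigma\mathbin{\triangle}\{[c]_H\}$: if $[c]_H\notin\sigma$, pair $\sigma$ with the coface $\sigma\cup\{[c]_H\}$; if $[c]_H\in\sigma$, pair $\sigma$ with the face $\sigma\setminus\{[c]_H\}$. Since the minimum and maximum of $\sigma$ lie strictly on either side of $[c]_H$, they are preserved under this toggle, so the companion has the same core, making the matching a well-defined involution on non-elementary simplices. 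Elementary simplices are unmatched and form exactly the critical subcomplex $\Stein{S_*\wr G}_H$.

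The bulk of the work is to verify that this matching is acyclic. Along any V-path $\sigma_0\subset\sigma_0^+\supset\sigma_1\subset\sigma_1^+\supset\cdots$ (where each $\sigma_i\subset\sigma_i^+$ is a matched pair and each $\sigma_i^+\supset\sigma_{i+1}$ is an unmatched face inclusion), the step $\sigma_i^+\supset\sigma_{i+1}$ removes a vertex of $\sigma_i^+$ other than $[c_i]_H$, possibly modifying the minimum or maximum of the chain and producing a new core $[c_{i+1}]_H$. Using Observation~\ref{obs:poset_map} together with the explicit description of elementary cores extracted from the proof of Proposition~\ref{prop:lubs}, I would exhibit a monovariant---for instance the lexicographic pair consisting of the feet-value of the minimum of the chain followed by the feet-value of the core---that strictly decreases along every V-path, ruling out cycles. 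Once acyclicity is in hand, Forman's discrete Morse theorem (in the simplicial form recorded in \cite{bestvina97}) provides a deformation retraction of $|P_H^1|$ onto $\Stein{S_*\wr G}_H$.

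The hard part will be this acyclicity check. Elementary cores behave subtly under passage to sub-intervals whose minimum or maximum has been altered, and a careful case analysis is required to show that every V-path terminates. The orderliness of $(H,A)$ and the explicit form of elementary splittings established in Proposition~\ref{prop:lubs}---namely that they consist of length-$1$ single-head splits at selected feet, optionally followed by a single additional split on the leftmost new foot of some clumps---are what make the elementary core a well-behaved invariant, and will be crucial in this analysis.
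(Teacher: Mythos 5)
There is a genuine gap, and it appears before the acyclicity question you defer to: the matching itself is not well defined. For a non-elementary simplex $\sigma=([x_0]_H<\cdots<[x_k]_H)$ with $[c]_H\defeq\core([[x_0]_H,[x_k]_H])\notin\sigma$, the set $\sigma\cup\{[c]_H\}$ is a simplex of $|P_H^1|$ only if $[c]_H$ is comparable to \emph{every} vertex of $\sigma$, not just to $[x_0]_H$ and $[x_k]_H$, and this fails in general. For instance (with $d=2$ and $H$ trivial), take $[x_0]_H=[1_2]_H$, let $[x_k]_H$ be the full splitting of both feet to depth $2$, and let $[z]_H$ be the splitting of the first foot to depth $2$ with the second foot untouched. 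Then $[c]_H$ is the depth-$1$ splitting of both feet, which is incomparable with $[z]_H$, so the chain $[x_0]_H<[z]_H<[x_k]_H$ has no companion under your rule. The core of an interval is an upper bound of the length-$1$ splittings in that interval, but an arbitrary intermediate vertex $[z]_H$ need only dominate \emph{some} of those length-$1$ splittings, so neither $[z]_H\le[c]_H$ nor $[c]_H\le[z]_H$ need hold. A discrete-Morse proof would have to use a different pairing (e.g., one built from the core of the bottom edge $[[x_0]_H,[x_1]_H]$, or processed interval by interval), and in any case the acyclicity verification --- which you correctly identify as the crux --- is entirely deferred; as stated, your candidate monovariant is also suspect, since deleting the minimum of a chain along a V-path \emph{increases} the feet-value of the minimum.

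For comparison, the paper avoids matchings altogether and runs the argument interval by interval: one attaches the closed intervals $|[[x]_H,[y]_H]|$ with $[x]_H\not\prec[y]_H$ to $\Stein{S_*\wr G}_H$ in order of increasing length, so that the relative link at each stage is the suspension of the open interval $|([x]_H,[y]_H)|$. Contractibility of that open interval is then immediate from Quillen's cone-point criterion applied to the poset map $\core_{[x]_H}$ of Observation~\ref{obs:poset_map}, which satisfies $\core_{[x]_H}([z]_H)\le[z]_H$ and $\core_{[x]_H}([z]_H)\le\core_{[x]_H}([y]_H)$ for all $[z]_H$ in the open interval, with $\core_{[x]_H}([y]_H)$ lying strictly inside it precisely because $[x]_H\not\preceq[y]_H$. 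This uses the core only relative to a fixed basepoint $[x]_H$, where its monotonicity is clean, and never requires comparing the core with arbitrary intermediate vertices --- exactly the point where your construction breaks down.
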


\begin{proof}
 This general proof technique was first established in \cite{brown92} for the Stein--Farley complex for $F$, and used subsequently in, e.g., \cite{fluch13,bux16,witzel18,belk16}. In particular see \cite[Lemma~4.10 and Proposition~4.11]{belk16}. We will build up from $\Stein{S_*\wr G}_H$ to all of $|P_H^1|$ by attaching all the intervals $|[[x]_H,[y]_H]|$ that do not lie in $\Stein{S_*\wr G}_H$, in order of increasing length, and proving the relative link is contractible whenever such an interval is glued in. If the interval $|[[x]_H,[y]_H]|$ fails to lie in $\Stein{S_*\wr G}_H$ then $[x]_H\not\prec [y]_H$. Since we attach such intervals in increasing order of length, and since every proper subinterval of an interval has strictly shorter length, the relative link when we attach $|[[x]_H,[y]_H]|$ consists of all its simplices corresponding to chains that do not use both $[x]_H$ and $[y]_H$. In other words, the relative link is $|[[x]_H,[y]_H)|\cup|([x]_H,[y]_H]|$. This is the suspension of $|([x]_H,[y]_H)|$, so the relative link is contractible if and only if $|([x]_H,[y]_H)|$ is contractible, which we claim is true whenever $[x]_H\not\preceq [y]_H$.

 Let $[z]_H\in ([x]_H,[y]_H)$. We have $[x]_H<\core_{[x]_H}([z]_H)$ since every non-trivial $[[x]_H,[z]_H]$ contains at least one $[x\Omega]_H$ for $\Omega$ a length-$1$ splitting of $[x]_H$. Also, $\core_{[x]_H}([z]_H)<[y]_H$ since $\core_{[x]_H}([z]_H)\le [z]_H<[y]_H$. In particular $\core_{[x]_H}$ restricts to a function from $([x]_H,[y]_H)$ to itself, which is a poset map by Observation~\ref{obs:poset_map} and which satisfies $\core_{[x]_H}([z]_H)\le [z]_H$ for all $[z]_H\in ([x]_H,[y]_H)$. Moreover, since $[x]_H\not\preceq [y]_H$ we know $\core_{[x]_H}([y]_H)<[y]_H$, so $\core_{[x]_H}([y]_H)\in ([x]_H,[y]_H)$, and every $[z]_H\in ([x]_H,[y]_H)$ satisfies $\core_{[x]_H}([z]_H)\le\core_{[x]_H}([y]_H)$, so \cite[Section~1.5]{quillen78} says that $|([x]_H,[y]_H)|$ is contractible, with $\core_{[x]_H}([y]_H)$ as a cone point.

Having shown that under our procedure every interval $|[[x]_H,[y]_H]|$ in $P_H^1$ not contained in $\Stein{S_*\wr G}_H$ is attached along a contractible relative link, we conclude that $|P_H^1|\simeq \Stein{S_*\wr G}_H$.
\end{proof}

\begin{corollary}\label{cor:stein_cible}
 If $H$ is $A$-coarsely self-similar for some finite $A\subseteq V_d\cap\Aut(\tree_d)$, $(H,A)$ is orderly and $H$ is nuclear in $G$, then $\Stein{S_*\wr G}_H$ is contractible.
\end{corollary}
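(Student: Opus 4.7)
The proof should be essentially immediate from two results already established in the excerpt, so the plan is to simply chain them together. First I would invoke Lemma~\ref{lem:poset_cible}: since $H$ is nuclear in $G$, the poset $P_H^1$ is directed, and hence its geometric realization $|P_H^1|$ is contractible (directed posets have contractible realizations by the standard Quillen-style argument, which Lemma~\ref{lem:poset_cible} already packages).

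Next I would apply Proposition~\ref{prop:poset_stein}, whose hypotheses are precisely the remaining assumptions in the corollary statement: $H$ is $A$-coarsely self-similar for some finite $A\subseteq V_d\cap\Aut(\tree_d)$, and $(H,A)$ is orderly. This proposition gives a homotopy equivalence $|P_H^1|\simeq \Stein{S_*\wr G}_H$. Combining this with contractibility of $|P_H^1|$ from the previous step yields contractibility of $\Stein{S_*\wr G}_H$.

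There is no real obstacle here; the whole content of the corollary is that the three hypotheses (nuclear, coarsely self-similar, orderly) are exactly what is needed to feed into the two preceding results. The only thing worth remarking on in the written proof is to make explicit which hypothesis is used for which step, so the reader can see that none of the assumptions is redundant: nuclearity is used purely to obtain directedness of $P_H^1$ in Lemma~\ref{lem:poset_cible}, while coarse self-similarity together with the orderly condition on $(H,A)$ is used in Proposition~\ref{prop:poset_stein} to ensure that elementary cores exist and are unique, so that the filtration-by-interval-length argument retracting $|P_H^1|$ onto $\Stein{S_*\wr G}_H$ goes through.
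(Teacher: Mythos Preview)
Your proposal is correct and matches the paper's proof exactly: invoke Lemma~\ref{lem:poset_cible} (nuclearity gives contractibility of $|P_H^1|$) and then Proposition~\ref{prop:poset_stein} (coarse self-similarity plus orderliness give the homotopy equivalence with $\Stein{S_*\wr G}_H$). Your additional commentary on which hypothesis feeds into which step is accurate and helpful, though the paper itself states the argument in two sentences without this annotation.
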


\begin{proof}
 Since $H$ is nuclear in $G$, $|P_H^1|$ is contractible by Lemma~\ref{lem:poset_cible}. Hence $\Stein{S_*\wr G}_H$ is contractible by Proposition~\ref{prop:poset_stein}.
\end{proof}

\subsection{Polysimplicial structure}\label{sec:poly}

The complex $\Stein{S_*\wr G}_H$ is still not quite nice enough to get higher connectivity of descending links. In this subsection, following \cite{belk16} we will glom certain simplices in $\Stein{S_*\wr G}_H$ together to reveal a coarser, polysimplicial cell structure.

\begin{definition}[Polysimplicial complex]
 A \emph{polysimplex} is a direct product of some finite collection of simplices. A \emph{polysimplicial complex} is an affine cell complex where every cell is a polysimplex, and the intersection of any two cells is a (possibly empty) face of each.
\end{definition}

In dimensions $0$ and $1$ a polysimplex is just a simplex, and in dimension $2$ it is either a triangle or a square.

Recall our convention of leaving out trivial direct summands, and using superscripts like $~^{(i)}$ to indicate where a given summand belongs. In particular if $\Lambda$ is a $1$-head splitting then $\Lambda^{(i)}$ denotes $1_1\oplus\cdots\oplus 1_1\oplus \Lambda \oplus 1_1\oplus\cdots\oplus 1_1$ with $\Lambda$ in the $i$th spot and some number of $1_1$ summands.

\begin{definition}[Simple splitting, disjoint]
 Call a splitting \emph{simple} if it is $\sim_H$-equivalent to one of the form $\Lambda^{(i)}$, for some $i$ and some $1$-head splitting $\Lambda$. Two non-trivial simple splittings $\Lambda_1^{(i)}$ and $\Lambda_2^{(j)}$ with the same number of heads will be called \emph{disjoint} if $i\ne j$. This is well defined since $\Lambda_1^{(i)}\sim_H \Lambda_2^{(j)}$ implies $i=j$.
\end{definition}

See Figure~\ref{fig:simple} for an example of a simple splitting.

\begin{figure}[htb]
 \centering
 \begin{tikzpicture}[line width=0.8pt]
 \draw[->] (-2,0) -- (-2,1);   \draw[->] (-1,0) -- (-1,1);   \draw[->] (0,0) -- (0,1);   \draw[->] (1,0) -- (1,1);   \draw[->] (2,0) -- (2,1);   \draw[->] (3,0) -- (3,1);
	\node at (0,-.25) {$h$};
	\draw (-.5,-1) -- (0,-.5) -- (.5,-1)   (0,-1.5) -- (.5,-1) -- (1,-1.5)   (-.5,-2) -- (0,-1.5) -- (.5,-2);
 \end{tikzpicture}
 \caption{The splitting $([1_1,h,1_1][T,\id,1_4])^{(3)}$ with $6$ heads, where $T$ is the tree consisting of the part of the picture below the ``$h$''. (Here $d=2$.)}
 \label{fig:simple}
\end{figure}
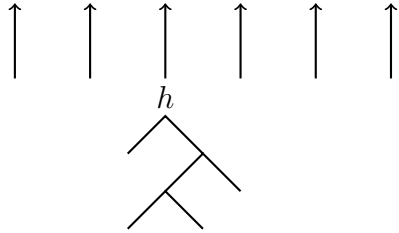

In particular every simple elementary splitting of $x$ consists of first applying a length-$1$ single-head splitting to one of the feet of $x$ and then possibly applying $[\vertwedge_1^1,\vv{\id},1_d]$ to the leftmost new foot. Note that the simple splitting in Figure~\ref{fig:simple} is therefore not elementary, since it features a caret below the second level.

Inspired by \cite[Section~5]{belk16}, we now define a polysimplicial complex $\Stein{S_*\wr G}_H^{poly}$, which will turn out to have $\Stein{S_*\wr G}_H$ as a simplicial subdivision. Let $[x]_H\in P_H^1$, say with $m$ feet. For each $1\le i\le m$ let $S_i$ equal one of the following \emph{elementary sets}:
\begin{align*}
&\{1_1\}\text{, } \\
&\{1_1,[1_1,h,1_1][\vertwedge_1^1,\vv{\id},1_d]\}\text{,} \\
&\{1_1,[1_1,h,1_1][\vertwedge_1^1,\vv{\id},1_d],[\vertwedge_1^1,\vv{\id},1_d][\vertwedge_1^d,\vv{\id},1_{2d-1}]\}\text{, } \\
&\{1_1,[\vertwedge_1^1,\vv{\id},1_d][\vertwedge_1^d,\vv{\id},1_{2d-1}]\}
\end{align*}
for $h$ some element of $H$. If $A=\{\id\}$, i.e., if $H$ is actually self-similar, then we insist the $S_i$ each only be of the first two forms, and that only these two are called \emph{elementary sets}. This ensures that, in any case, all the splittings in an elementary set are elementary. Also note that for any choice of elementary sets $S_i$, the set of $\sim_H$-equivalence classes of elements of each $S_i$ is totally ordered under $\le$. Denote by $\langle x\mid S_1,\dots,S_m\rangle$ the full subcomplex of $\Stein{S_*\wr G}_H$ spanned by all vertices of the form $[x(s_1\oplus\cdots\oplus s_m)]_H$ for $s_i\in S_i$.

For any $x$ and $S_1,\dots,S_m$, $\langle x\mid S_1,\dots,S_m\rangle$ is a simplicial subdivision of the polysimplex obtained by taking the product of the simplices $\langle x\mid \{1_1\},\dots,\{1_1\},S_i,\{1_1\},\dots,\{1_1\}\rangle$. That these are simplices is because $S_i/\sim_H$ is totally ordered. Also note that these simplices have dimension $|S_i|-1$, so more precisely $\langle x\mid S_1,\dots,S_m\rangle$ is a simplicial subdivision of a polysimplex of dimension $(|S_1|-1)\cdots(|S_m|-1)$. We will denote the unsubdivided polysimplex also by $\langle x\mid S_1,\dots,S_m\rangle$, and the abuse of notation will not be a problem. Note that $[x]_H$ is the unique minimal vertex of $\langle x\mid S_1,\dots,S_m\rangle$. Also, if $y=x(s_1\oplus\cdots\oplus s_m)$ for $s_i\in S_i$ then the subcomplex of $\langle x\mid S_1,\dots,S_m\rangle$ spanned by the upper bounds of $[y]_H$ is a face of $\langle x\mid S_1,\dots,S_m\rangle$.

Define $\Stein{S_*\wr G}_H^{poly}$ to be the polysimplicial complex whose cells are the $\langle x\mid S_1,\dots,S_m\rangle$.

\begin{lemma}\label{lem:intersect_polysimplices}
 The intersection of any two polysimplices in $\Stein{S_*\wr G}_H^{poly}$ is a (possibly empty) face of each, so $\Stein{S_*\wr G}_H^{poly}$ really is a polysimplicial complex.
\end{lemma}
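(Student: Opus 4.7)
The plan is to take two polysimplices $P = \langle x \mid S_1,\dots,S_m\rangle$ and $Q = \langle y \mid T_1,\dots,T_n\rangle$ of $\Stein{S_*\wr G}_H^{poly}$, assume their vertex sets share at least one vertex, and show that $P \cap Q$ is a common face of both. The definition already identifies each polysimplex with the product $\prod_i \Delta^{|S_i|-1}$, so a face of $P$ corresponds to choosing, for each $i$, a nonempty subset $S_i' \subseteq S_i$. After rebasing (replacing $x$ by $x(\min S_1' \oplus \cdots \oplus \min S_m')$ and translating each $S_i'$ to have minimum $1_1$), every face is again a polysimplex of the same combinatorial type, and in particular lies in $\Stein{S_*\wr G}_H^{poly}$.

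The core of the argument is to align the coordinate structures of $P$ and $Q$ at a shared vertex $v = [x(s_1\oplus\cdots\oplus s_m)]_H = [y(t_1\oplus\cdots\oplus t_n)]_H$. Using Lemma~\ref{lem:decomp_up_bd} and Corollary~\ref{cor:decomp_split}, the decomposition of any splitting from $v$ (upward in either $P$ or $Q$) into $1$-head direct summands is unique up to $\sim_H$, and this lets me identify each nontrivial coordinate of $P$ with a nontrivial coordinate of $Q$, based at a common refinement of the underlying forests. The orderly hypothesis on $(H, A)$ (Definition~\ref{def:orderly}) is critical here: it prevents artifact-absorption from making simple splittings at different feet become $\sim_H$-equivalent, so the coordinate matching is unambiguous. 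With this matching in hand I would construct a smallest common vertex $v^-$ in the vertex set of $P \cap Q$ by taking coordinate-wise minima, and then re-present both polysimplices with $v^-$ as base: $P = \langle v^- \mid U_1,\dots,U_k\rangle$ and $Q = \langle v^- \mid U_1',\dots,U_k'\rangle$ after relabeling (with trivial entries $\{1_1\}$ used to pad whichever coordinates are unused on one side).

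Once this common presentation is secured, I would observe that $P \cap Q = \langle v^- \mid U_1 \cap U_1',\dots,U_k \cap U_k'\rangle$; a short case check over the four possible elementary sets shows that the intersection of any two of them (all sharing minimum $1_1$) is again one of the four elementary sets, possibly the trivial one $\{1_1\}$. This intersection is then visibly a face of each of $P$ and $Q$, completing the proof. The main obstacle is the coordinate-alignment step of the preceding paragraph: making rigorous the canonical identification of the coordinates of $P$ and $Q$ at a shared vertex $v$. This is where orderliness and the unique-decomposition results (Corollary~\ref{cor:decomp_split}, Proposition~\ref{prop:lubs}) do all the work; once the coordinate matching has been pinned down, the rest of the argument is essentially bookkeeping.
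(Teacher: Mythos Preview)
Your overall strategy matches the paper's: find a common minimum vertex of $P\cap Q$ via coordinate-wise minima, rebase both polysimplices at that vertex, and then read off $P\cap Q$ as $\langle w\mid S_1\cap T_1,\dots\rangle$, checking that intersections of elementary sets are elementary. The divergence is in the key step, the one you correctly flag as ``the main obstacle'': showing the two coordinate structures are compatible on $P\cap Q$.

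The paper phrases this step as proving that the two meet operations $\wedge_P$ and $\wedge_Q$ coincide on vertices of $P\cap Q$. After passing to faces so that $v\wedge_P v'=[x]_H$ and $v\wedge_Q v'=[y]_H$, it does \emph{not} use Lemma~\ref{lem:decomp_up_bd}, Corollary~\ref{cor:decomp_split}, or Proposition~\ref{prop:lubs}; instead it invokes Remark~\ref{rmk:gpoid_cantor} and views $x^{-1}y$ as a homeomorphism from $n$ copies of $\partial\tree_d$ to $m$ copies. The condition that for each $i$ either $s_i=1_1$ or $s_i'=1_1$ (and likewise for the $t_i$) forces each boundary copy on one side to land entirely inside a single copy on the other side, and running this in both directions yields $m=n$ and $[x]_H=[y]_H$. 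This topological argument is short and sidesteps any need for a uniqueness-of-decomposition statement.

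Your route has a genuine gap as written. Lemma~\ref{lem:decomp_up_bd} and Corollary~\ref{cor:decomp_split} give only \emph{existence} of a direct-sum decomposition into $1$-head splittings; neither claims uniqueness, and Proposition~\ref{prop:lubs} is about least upper bounds of length-$1$ splittings, not about uniquely decomposing an arbitrary splitting. Moreover, you propose to match coordinates by looking \emph{upward} from a shared vertex $v$, but the coordinates you need to align are indexed by the feet of $x$ and of $y$, which sit \emph{below} $v$; the passage from the feet of $v$ back down to the feet of $x$ (resp.\ $y$) is exactly what needs to be controlled, and nothing you cite does that. Orderliness is used in the paper only earlier, to make elementary cores well defined; it plays no role in this particular lemma. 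You could likely make a decomposition-uniqueness argument work, but it would require proving a new statement rather than citing the existing ones, and the paper's Cantor-set argument is both quicker and conceptually cleaner.
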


\begin{proof}
 We will prove this by closely mimicking the proof of Lemma~5.3 in \cite{belk16} for the case of the R\"over group. Say the polysimplices are $P=\langle x\mid S_1,\dots,S_m\rangle$ and $Q=\langle y\mid T_1,\dots,T_n\rangle$, and assume $P\cap Q\ne \emptyset$. Define a binary operation $\wedge_P\colon P^{(0)}\times P^{(0)}\to P^{(0)}$ via
$$[x(s_1\oplus\cdots\oplus s_m)]_H \wedge_P [x(s_1'\oplus\cdots\oplus s_m')]_H \defeq [x(\min(s_1,s_1')\oplus\cdots\oplus\min(s_m,s_m'))]_H \text{.}$$
Since each elementary set $S_i$ is totally ordered, these minima exist, and since $S_i\to S_i/\sim_H$ is bijective $\wedge_P$ is well defined. Define $\wedge_Q$ analogously. Viewing $P$ and $Q$ as subcomplexes of $\Stein{S_*\wr G}_H$ their intersection is a subcomplex, and the vertex sets of $\Stein{S_*\wr G}_H$ and $\Stein{S_*\wr G}_H^{poly}$ are the same, so $P\cap Q\ne\emptyset$ implies that $P\cap Q$ contains a vertex. We now claim that $\wedge_P$ and $\wedge_Q$ coincide on vertices of $P\cap Q$.

Let $v$ and $v'$ be vertices in $P\cap Q$. Up to replacing $P$ and $Q$ by faces, we can assume that $v\wedge_P v'=[x]_H$ and $v\wedge_Q v'=[y]_H$. Choose $s_i,s_i'\in S_i$ ($1\le i\le m$) and $t_i,t_i'\in T_i$ ($1\le i\le n$) such that
\begin{align*}
 v&=[x(s_1\oplus\cdots\oplus s_m)]_H=[y(t_1\oplus\cdots\oplus t_n)]_H \text{ and}\\
 v'&=[x(s_1'\oplus\cdots\oplus s_m')]_H=[y(t_1'\oplus\cdots\oplus t_n')]_H\text{.}
\end{align*}
Solving for $x^{-1}y$ in the first equation, we get $x^{-1}y=[s_1\oplus\cdots\oplus s_m][1_r,\sigma\vv{h},1_r][(t_1\oplus\cdots\oplus t_n)^{-1}]$ for some $r$ and some $\sigma\vv{h}\in S_r\wr H$. Solving for $x^{-1}y$ in the second equation, we get $x^{-1}y=[s_1'\oplus\cdots\oplus s_m'][1_p,\sigma'\vv{h}',1_p][(t_1'\oplus\cdots\oplus t_n')^{-1}]$ for some $p$ and some $\sigma'\vv{h}'\in S_p\wr H$. In summary,
$$x^{-1}y=[s_1\oplus\cdots\oplus s_m][1_r,\sigma\vv{h},1_r][(t_1\oplus\cdots\oplus t_n)^{-1}]=[s_1'\oplus\cdots\oplus s_m'][1_p,\sigma'\vv{h}',1_p][(t_1'\oplus\cdots\oplus t_n')^{-1}]\text{.}$$
Since $v\wedge_P v'=[x]_H$ we know that for each $1\le i\le m$ either $s_i=1_1$ or $s_i'=1_1$. Similarly for each $1\le i\le n$ either $t_i=1_1$ or $t_i'=1_1$. Now view $x^{-1}y$ as a homeomorphism from $C_1\sqcup\cdots\sqcup C_n$ to $D_1\sqcup\cdots\sqcup D_m$, with $C_i$ and $D_i$ equal to $\partial\tree_d$ for all $i$, as per Remark~\ref{rmk:gpoid_cantor}. Whenever $t_i=1_1$ or $t_i'=1_1$, the image of $C_i$ lands in some $D_j$. In fact this happens for every $i$. Similarly for each $i$ either $s_i$ or $s_i'$ is $1_1$, which means that the image of any $D_j$ under $y^{-1}x$ lands in some $C_i$. This tells us that $m=n$, and that $t_i=1_1$ implies $s_{\sigma(i)}=1_1$ and similarly $t_i'=1_1$ implies $s_{\sigma'(i)}'=1_1$. Since at least one of these holds for each $i$, we conclude that, for each $i$, $x^{-1}y$ acts on $C_i$ by mapping it homeomorphically to $D_{\sigma(i)}$ via a homeomorphism in $H$. Hence $[x^{-1}y]_H=[1_m]_H$ and $[x]_H=[y]_H$, which finishes the proof of the claim that $\wedge_P$ and $\wedge_Q$ coincide on $P\cap Q$.

Let $w_1,\dots,w_k$ be the vertices of $P\cap Q$, and set $w_P\defeq w_1\wedge_P\cdots\wedge_P w_k$ and $w_Q\defeq w_1\wedge_Q\cdots\wedge_Q w_k$. Since $\wedge_P$ and $\wedge_Q$ coincide on $P\cap Q$, $w_P=w_Q$, so call it $w$. Note that $w\le w_j$ for all $1\le j\le k$, and $w$ is itself a vertex of $P\cap Q$. Hence, up to replacing $P$ by a face (the face spanned by the upper bounds of $w$) we can assume $w=[x]_H$, and up to replacing $Q$ by a face we can assume $w=[y]_H$. After this simplification $P\cap Q$ is unchanged but now we have $[x]_H=[y]_H$, and up to possibly adjusting $T_1,\dots,T_m$ we can assume $x=y$, which implies that $P\cap Q=\langle x\mid S_1\cap T_1,\dots,S_m\cap T_m\rangle$. Since any intersection of two elementary sets is again an elementary set, we conclude that $P\cap Q$ is a face of $P$ and $Q$.
\end{proof}

\begin{observation}\label{obs:subdiv}
 $\Stein{S_*\wr G}_H^{poly}$ has $\Stein{S_*\wr G}_H$ as a simplicial subdivision.
\end{observation}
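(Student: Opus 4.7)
The plan is to verify two facts: (a) every simplex of $\Stein{S_*\wr G}_H$ lies in some cell of $\Stein{S_*\wr G}_H^{poly}$, and (b) within each cell $\langle x \mid S_1,\dots,S_m\rangle$ the simplices of $\Stein{S_*\wr G}_H$ form exactly the standard simplicial subdivision of the product polysimplex $\prod_i \Delta^{|S_i|-1}$. Together with Lemma~\ref{lem:intersect_polysimplices}, these yield the observation.

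For (b), I would use Lemma~\ref{lem:decomp_up_bd} and Corollary~\ref{cor:decomp_split} to establish an order-preserving bijection
\[
S_1/{\sim_H} \times \cdots \times S_m/{\sim_H} \longrightarrow \text{vertices of } \langle x \mid S_1,\dots,S_m\rangle,
\]
sending $(s_1,\dots,s_m)$ to $[x(s_1 \oplus \cdots \oplus s_m)]_H$, where the domain carries componentwise order and the codomain carries the restriction of $\le$. Order-preservation is Lemma~\ref{lem:decomp_up_bd} applied factor-by-factor, and injectivity uses the fact, again by Lemma~\ref{lem:decomp_up_bd}, that direct-sum decompositions of splittings over a given $[x]_H$ are canonical modulo $\sim_H$ on each factor. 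Since each $S_i/{\sim_H}$ is totally ordered, the order complex of the product poset is precisely the standard simplicial subdivision of $\prod_i \Delta^{|S_i|-1}$. Each chain in this product is also an elementary simplex of $\Stein{S_*\wr G}_H$, since its top vertex lies in an elementary interval over $[x]_H$: direct sums of simple elementary splittings are elementary, by the description of elementary splittings following Proposition~\ref{prop:lubs}.

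For (a), let $\Delta = ([x_0]_H < \cdots < [x_k]_H)$ be an elementary simplex of $\Stein{S_*\wr G}_H$. Since $[x_0]_H \prec [x_k]_H$, the description of elementary splittings gives $x_0^{-1} x_k \sim_H \Lambda_1 \oplus \cdots \oplus \Lambda_m$ with each $\Lambda_j$ a simple elementary splitting of the $j$th foot of $x_0$, having one of three shapes: trivial, a length-$1$ single-head split, or such a split followed by $[\vertwedge_1^d,\vv{\id},1_{2d-1}]$ applied to the leftmost new foot. For each intermediate vertex, Lemma~\ref{lem:decomp_up_bd} provides a decomposition $x_0^{-1} x_i \sim_H s_1^{(i)} \oplus \cdots \oplus s_m^{(i)}$ with $[s_j^{(i)}]_H \le [\Lambda_j]_H$. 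Varying $i$, for each $j$ the set $\{[s_j^{(i)}]_H : 0 \le i \le k\}$ is a totally ordered subset of $[[1_1]_H,[\Lambda_j]_H]$, and a case analysis on the shape of $\Lambda_j$ shows that compatible representatives can be chosen to lie in a single one of the four prescribed elementary sets $S_j$. This exhibits $\Delta \subseteq \langle x_0 \mid S_1,\dots,S_m\rangle$.

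The main obstacle will be the case analysis in (a): in particular when $\Lambda_j$ has the length-$2$ form $[1_1,h,1_1][\vertwedge_1^1,\vv{\id},1_d][\vertwedge_1^d,\vv{\id},1_{2d-1}]$, one must check that every non-trivial, non-maximal element of $[[1_1]_H,[\Lambda_j]_H]$ is $\sim_H$-equivalent to $[1_1,h,1_1][\vertwedge_1^1,\vv{\id},1_d]$ with the same $h$. This uses Proposition~\ref{prop:lubs} together with the orderly assumption on $(H,A)$ to pin down a unique intermediate step in the interval, so that the prescribed elementary set of the third type captures the entire chain across $i$.
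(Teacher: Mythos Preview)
Your proof is correct and follows the same route as the paper's: first observe that each polysimplex is, by construction, a subdivided piece of $\Stein{S_*\wr G}_H$, and then show every elementary simplex of $\Stein{S_*\wr G}_H$ lands inside some polysimplex via the direct-sum decomposition of elementary splittings (Corollary~\ref{cor:decomp_split}). The paper's proof is two sentences and leaves the verification you carry out in part~(a) implicit; your version spells out why the intermediate vertices of a chain can all be captured by a single choice of elementary sets $S_j$, which is a genuine detail the paper elides.

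One caution about your final paragraph: the statement that ``every non-trivial, non-maximal element of $[[1_1]_H,[\Lambda_j]_H]$ is $\sim_H$-equivalent to $[1_1,h,1_1][\vertwedge_1^1,\vv{\id},1_d]$ with the same $h$'' is false if read as asserting that all such intermediate elements are mutually $\sim_H$-equivalent---different $h$'s produce different artifacts, and the orderly condition in fact guarantees they are \emph{distinguished} by $\rho(\cdot)(1)$. What you actually need, and what does hold, is that within your fixed chain the $j$th components $[s_j^{(0)}]_H\le\cdots\le[s_j^{(k)}]_H$ are totally ordered, so any two of them that are length-$1$ have the same number of feet and are comparable, hence equal. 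Thus at most one length-$1$ class appears per $j$, and a single $h$ can be chosen to realize it; the third elementary-set type then absorbs the whole chain. This is a phrasing issue rather than a gap, but it is worth getting right since the orderly hypothesis cuts the other way from how you invoke it.
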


\begin{proof}
 The polysimplices were constructed by unsubdividing certain subcomplexes of $\Stein{S_*\wr G}_H$, so the only thing to show is that every simplex of $\Stein{S_*\wr G}_H$ lies in some polysimplex. But this is immediate since up to $\sim_H$ every elementary splitting is a direct sum of simple elementary splittings, by Corollary~\ref{cor:decomp_split}.
\end{proof}

In particular if $\Stein{S_*\wr G}_H$ is contractible then $\Stein{S_*\wr G}_H^{poly}$ is too.

\subsection{Descending links}\label{sec:dlk}

In this subsection we inspect higher connectivity of descending links in $\Stein{S_*\wr G}_H^{poly}$, and prove our main result that certain R\"over--Nekrashevych groups are of type $\F_\infty$. First we prove that $\feet$ is affine on polysimplices, and hence $\feet$ is a Morse function on $\Stein{S_*\wr G}_H^{poly}$.

\begin{observation}\label{obs:feet_morse}
 The function $\feet$ is affine on polysimplices in $\Stein{S_*\wr G}_H^{poly}$.
\end{observation}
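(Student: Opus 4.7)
My plan is to show that $\feet$, restricted to the vertices of each polysimplex, is additively separable in the polysimplex coordinates, and then invoke the standard fact that additively separable functions on a product of simplices extend uniquely to affine functions.

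First I will fix a polysimplex $\langle x\mid S_1,\dots,S_m\rangle$, so $x$ is a groupoid element with $1$ head and $m$ feet, and the vertices are exactly the cosets $[x(s_1\oplus\cdots\oplus s_m)]_H$ with $s_i\in S_i$. Each $s_i$ is a $1$-head splitting with some number of feet $F(s_i)\in\{1,d,2d-1\}$, according to which of the four allowed elementary elements it is. Since direct sum of groupoid elements adds heads and adds feet, $s_1\oplus\cdots\oplus s_m$ has $m$ heads and $\sum_i F(s_i)$ feet, and this is compatible with $x$ (which has $m$ feet), so the product $x(s_1\oplus\cdots\oplus s_m)\in\Groupoid{S_*\wr G}$ has $1$ head and $\sum_i F(s_i)$ feet. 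Passing to $\sim_H$-classes does not change the foot count, so
\[
\feet([x(s_1\oplus\cdots\oplus s_m)]_H) \;=\; \sum_{i=1}^m F(s_i),
\]
which is manifestly a sum of terms each depending on a single factor index.

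Second, I will observe that the polysimplex $\langle x\mid S_1,\dots,S_m\rangle$ is by construction the product $\prod_i\Delta(S_i)$ of simplices on vertex sets $S_i$. A direct computation in barycentric coordinates (or equivalently the general fact that a function on $\prod_i\Delta_i$ with vertex values of the form $\sum_i g_i(v_i)$ extends affinely to $\sum_i\tilde g_i$, where $\tilde g_i$ is the affine extension of $g_i$ on $\Delta_i$) shows that $\feet$ extends affinely to the whole polysimplex. That this extension is the same as the Morse function $\feet$ used earlier is automatic: both agree on vertices, and both are affine on each simplex of the simplicial subdivision of Observation~\ref{obs:subdiv}, hence they coincide everywhere on the polysimplex.

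The only thing that could plausibly be called an obstacle is verifying the additive formula for the foot count in the first step, i.e., that taking a direct sum of groupoid elements and then multiplying on the left by $x$ really does produce an element whose foot count is $\sum_i F(s_i)$; but this is immediate from the definition of $\oplus$ in $\Groupoid{S_*\wr G}$ and from the fact that groupoid multiplication $x\cdot y$ (when defined) has the heads of $x$ and the feet of $y$. With that in hand the rest is formal.
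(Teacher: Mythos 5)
Your proposal is correct and follows essentially the same route as the paper: both arguments reduce to showing that the foot count at a vertex $[x(s_1\oplus\cdots\oplus s_m)]_H$ is additively separable over the coordinates $s_i$ (the paper phrases this via the length function $\lambda$ after translating to $x=1_m$, you phrase it via the foot counts $F(s_i)$, which is the same computation), and then both invoke affineness of separable functions on a product of simplices. No gaps.
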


\begin{proof}
 Let $P=\langle x\mid S_1,\dots,S_m\rangle$ be a polysimplex in $\Stein{S_*\wr G}_H^{poly}$. Let $\Sigma_i=\langle x\mid \{1_1\},\dots,S_i,\{1_1\},\dots,\{1\}\rangle$, so $P=\Sigma_1\times\cdots\times \Sigma_m$. Also write $P'$ for the simplicial subdivision of $P$ in $\Stein{S_*\wr G}_H$. Since $\feet$ is invariant under the left action of $\Groupoid{S_*\wr G}$ on $\Stein{S_*\wr G}_H^{poly}$, without loss of generality $x=1_m$. The vertices of $P$ are therefore $\sim_H$-classes of splittings. If $\lambda$ is the function taking such a vertex $v$ to its length as a splitting, then $\feet(v)=m+\lambda(v)(d-1)$. Therefore to see that the extension of $\feet$ from $P^{(0)}$ to $P'$ is affine on $P$, it suffices to show that the extension of $\lambda$ from $P^{(0)}$ to $P'$ is affine on $P$. For $v=(v_1,\dots,v_m)\in P^{(0)}$ with $v_i\in \Sigma_i^{(0)}$ we know $\lambda(v)=\lambda(v_1)+\cdots+\lambda(v_m)$, and $\lambda$ extends affinely to each $\Sigma_i$ since they are simplices, so indeed $\lambda$ (and hence $\feet$) is affine on $P$.
\end{proof}

Fix a vertex $[x]_H$. The notation $\dlk [x]_H$ will always mean descending link in $\Stein{S_*\wr G}_H^{poly}$ (as opposed to $\Stein{S_*\wr G}_H$ or $|P_H^1|$). The link of a vertex in a polysimplicial complex is the union of its links in the polysimplices it lies in, so to understand links in a polysimplicial complex one needs to understand links in polysimplices. The link of a vertex $(v_1,\dots,v_r)$ in a polysimplex $\Sigma_1\times\cdots\times\Sigma_r$ (with each $\Sigma_i$ a simplex) is the join of the links $\lk_{\Sigma_i} v_i$, and these are all easily understood.

Many of the proof ideas in what follows come from the analogous ones for the R\"over group case in \cite[Section~6]{belk16}.

\begin{definition}[Merging]
 An element $\Upsilon\in\Groupoid{S_*\wr H}$ is called a \emph{merging} if $\Upsilon^{-1}$ is a splitting. The \emph{length} of $\Upsilon$ is the length of $\Upsilon^{-1}$. If $\Upsilon^{-1}$ is a simple splitting we call $\Upsilon$ a \emph{simple merging}, and if $\Upsilon^{-1}$ is an elementary splitting we call $\Upsilon$ an \emph{elementary merging}. If $\Upsilon$ is a merging with $\feet([x]_H)$ heads, so the product $x\Upsilon$ makes sense, we call $\Upsilon$ a \emph{merging of $x$}. We also call $[x\Upsilon]_H$ a \emph{merging of $[x]_H$}.
\end{definition}

For $[y]_H<[x]_H$, the vertices $[x]_H$ and $[y]_H$ span an edge in $\Stein{S_*\wr G}_H$ if and only if $[y]_H\prec [x]_H$, that is if and only if $y^{-1}x$ is a non-trivial elementary splitting, but they span an edge in $\Stein{S_*\wr G}_H^{poly}$ if and only if $y^{-1}x$ is a non-trivial simple elementary splitting, i.e., $x^{-1}y$ is a simple elementary merging. Hence the vertices of $\dlk [x]_H$ are those of the form $[x\Upsilon]_H$ for $\Upsilon$ a non-trivial simple elementary merging. Every simple splitting is $\sim_H$-equivalent to a splitting of the form $\Lambda^{(i)}$ for $\Lambda$ a $1$-head splitting. Hence every simple merging is of the form $[1_m,\sigma\vv{h},1_m]\Upsilon^{(i)}$ for some $\sigma\vv{h}\in S_m\wr H$, some $1$-foot merging $\Upsilon$ and some $1\le i\le m-r(d-1)$, where $r$ is the length of $\Upsilon$. For notational ease we will denote this particular simple merging by $\sigma\vv{h}\Upsilon^{(i)}$.

Having understood the vertices of $\dlk [x]_H$, we turn our attention to the higher dimensional simplices. First we discuss a sufficient condition for a collection of vertices to span a simplex in $\dlk [x]_H$, namely if they have pairwise disjoint supports.

\begin{definition}[Support]
 For a non-trivial simple merging $\sigma\vv{h}\Upsilon^{(i)}$ with length $r$, define its \emph{support} to be the set
$$\{\sigma(i')\mid i\le i'\le i+r(d-1)\}\text{.}$$
\end{definition}

For some intuition, if $\Upsilon^{(i)}$ is a length-$r$ simple merging of $x$ then it ``uses'' the feet of $x$ numbered $i$ through $i+r(d-1)$. Hence $\sigma\vv{h}\Upsilon^{(i)}$ ``uses'' the (possibly non-consecutive) feet of $x$ numbered $\sigma(i),\dots,\sigma(i+r(d-1))$. These indices comprise what we are calling the support of the merging. The point, which the next lemma makes rigorous, is that if simple mergings have disjoint supports then they can be done simultaneously, at least up to possibly adjusting them with $\sim_H$.

\begin{lemma}\label{lem:disjoint_merge_span}
 If $\sigma_1\vv{h}_1\Upsilon_1^{(i_1)},\dots,\sigma_p\vv{h}_p\Upsilon_p^{(i_p)}$ are non-trivial simple elementary mergings of $x$ with pairwise disjoint supports then the vertices $[x\sigma_j\vv{h}_j\Upsilon_j^{(i_j)}]_H$ of $\dlk [x]_H$ span a simplex in $\dlk [x]_H$.
\end{lemma}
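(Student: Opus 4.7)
The plan is to exhibit a single polysimplex $P$ of $\Stein{S_*\wr G}_H^{poly}$ containing $[x]_H$ as its unique maximum vertex and each $[y_j]_H$ as a vertex of $P$ differing from $[x]_H$ in exactly one coordinate of the product-of-simplices decomposition, with distinct $j$'s giving distinct coordinates. Once this is done, writing $P=\Sigma_1\times\cdots\times \Sigma_{m'}$, the link of $[x]_H$ in $P$ is the join of the individual links $\lk_{\Sigma_k}([x]_H)$, so the $[y_j]_H$ lie in pairwise distinct join-factors and automatically span a simplex in that join. Since each $[y_j]_H$ has strictly fewer feet than $[x]_H$, this simplex lies in $\dlk [x]_H$ as required.

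To build $P$, set $m:=\feet([x]_H)$ and let $I_j\subseteq\{1,\dots,m\}$ denote the support of $\sigma_j\vv{h}_j\Upsilon_j^{(i_j)}$, so $|I_j|=1+r_j(d-1)$ where $r_j$ is the length of $\Upsilon_j$. Using the pairwise disjointness of the $I_j$, I will define $x'$ to be the element obtained from $x$ by performing all $p$ mergings simultaneously on their respective supports; then $m':=\feet([x']_H)=m-\sum_j r_j(d-1)$, and the feet of $x'$ partition naturally into one ``merged spot'' $k_j$ for each $j$, together with the feet carried over untouched from $x$. For each untouched slot set $S_k=\{1_1\}$; for each merged spot $k_j$ set $S_{k_j}$ to be whichever of the non-trivial elementary sets from Subsection~\ref{sec:poly} has its maximum element $\sim_H$-equivalent to the simple elementary splitting $\Upsilon_j^{-1}$ (length $r_j=1$ gives a two-element set; length $r_j=2$ gives a set of size $2$ or $3$).

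The polysimplex $P:=\langle x'\mid S_1,\dots,S_{m'}\rangle$ will then do the job. By Lemma~\ref{lem:decomp_up_bd}, its vertices are $[x'(s_1\oplus\cdots\oplus s_{m'})]_H$ for $(s_k)\in\prod_k S_k$. Taking $s_k=\max(S_k)$ in every slot reconstitutes $[x]_H$ by construction of $x'$; taking $s_{k_j}=1_1$ in a single slot $k_j$ while keeping $s_k=\max(S_k)$ elsewhere ``undoes only the $j$-th merging,'' producing exactly $[y_j]_H$. Consequently $[x]_H$ is the maximum vertex of $P$, each $[y_j]_H$ is a vertex differing from $[x]_H$ only in slot $k_j$, and the $k_j$'s are distinct because the $I_j$'s are disjoint, so the first paragraph's argument applies.

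The main obstacle is the construction of $x'$: each merging $\sigma_j\vv{h}_j\Upsilon_j^{(i_j)}$ carries a global permutation $\sigma_j\in S_m$ and labeling tuple $\vv{h}_j$ that act on all $m$ feet, not only on $I_j$, so the combined action of all $p$ mergings must be assembled into a single merging whose effect decomposes as a direct sum compatible with the chosen $S_k$'s. The disjointness of the $I_j$ ensures that the on-support actions of different mergings cannot conflict, and the $\sim_H$-equivalence, together with the orderly hypothesis on $(H,A)$ (which controls how $H$-elements pass through cloning into artifacts), lets me absorb the off-support portions of each $\sigma_j\vv{h}_j$ into a single global element of $S_m\wr H$ so that the direct-sum form is achieved simultaneously for all $j$.
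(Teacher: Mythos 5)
Your proposal is correct and follows essentially the same route as the paper: both build a single polysimplex whose unique minimal vertex is the simultaneous merging $x'$ (the paper's $\tau\Upsilon$) and whose elementary sets in the merged slots have maxima $\sim_H$-equivalent to the $\Upsilon_j^{-1}$, so that $[x]_H$ is the top vertex and each $[x\sigma_j\vv{h}_j\Upsilon_j^{(i_j)}]_H$ arises by leaving only slot $k_j$ unsplit. The one piece you defer --- assembling the off-support parts of the $\sigma_j$ and $\vv{h}_j$ into a single global element so the direct-sum form holds --- is precisely the explicit bookkeeping the paper carries out (reducing to $\vv{h}_j=\vv{\id}$, constructing the permutation $\tau$, and verifying $[\tau\Upsilon_1^{(1)}]_H=[\sigma_1\Upsilon_1^{(i_1)}]_H$), and your sketch of why disjointness makes this possible is the right one.
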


\begin{proof}
 Thanks to the left action of $\Groupoid{S_*\wr G}$ on $\Stein{S_*\wr G}_H^{poly}$ we can assume $x=1_m$. It suffices to show that the $p+1$ vertices $[1_m]_H$, $[\sigma_1\vv{h}_1\Upsilon_1^{(i_1)}]_H,\dots,[\sigma_p\vv{h}_p\Upsilon_p^{(i_p)}]_H$ of $\Stein{S_*\wr G}_H^{poly}$ lie in a common polysimplex. Since everything is happening up to $\sim_H$, without loss of generality $\vv{h}_j=\vv{\id}$ for all $1\le j\le p$ (the entries not ``trapped'' by $\Upsilon_j$ can vanish in $\sim_H$ and the entries that are trapped can be absorbed into $\Upsilon_j$).

 Let $r_j$ be the length of $\Upsilon_j$. Since the mergings have pairwise disjoint supports we know that $\sum_{j=1}^p|\{\sigma_j(i')\mid i_j\le i'\le i_j+r_j(d-1)\}|\le m$, so $\sum_{j=1}^p(1+r_j(d-1))\le m$. In particular we can choose $p$ pairwise disjoint sets $D_1,\dots,D_p\subseteq \{1,\dots,m\}$ with each $D_j$ consisting of some string of $1+r_j(d-1)$ consecutive numbers. Say $D_j=\{d_j,d_j+1,\dots,d_j+r_j(d-1)\}$. We will assume $d_1=1$ and $d_{j+1}=(d_j+r_j(d-1))+1$ for all $j$, so the $D_j$ are as ``tightly packed to the left'' as possible.

Now define $\tau\in S_m$ to be any permutation satisfying $\tau(k)=\sigma_j(k+i_j-d_j)$ for all $k\in D_j$ and all $1\le j\le p$. A well defined such $\tau$ exists since the $D_j$ are pairwise disjoint, and $\tau$ can be chosen to be injective (hence bijective) since the $\{\sigma_j(i')\mid i_j\le i'\le i_j+r_j(d-1)\}$ are pairwise disjoint. Let
$$\Upsilon\defeq \Upsilon_1^{(1)}\oplus\Upsilon^{(2)}\oplus\cdots\oplus\Upsilon_p^{(p)}\oplus 1_r$$
where $r=m-\sum_{j=1}^p|D_j|$, so $\Upsilon$ has $m$ heads and $p+r$ feet. In particular it makes sense to consider $1_m\tau\Upsilon$. For each $1\le j\le p$ let $S_j\defeq \{1_1,\Lambda_j\}$ be the elementary set satisfying $[\Lambda_j]_H=[\Upsilon_j^{-1}]_H$ (this is possible since $\Upsilon_j$ is elementary). For all $p<j\le p+r$ let $S_j\defeq\{1_1\}$. We want to show that the polysimplex $P\defeq \langle \tau\Upsilon \mid S_1,\dots,S_{p+r}\rangle$ contains all the vertices $[1_m]_H$, $[\sigma_1\Upsilon_1^{(i_1)}]_H,\dots,[\sigma_p\Upsilon_p^{(i_p)}]_H$.

Since $[1_m]_H=[\tau]_H=[\tau\Upsilon(\Lambda_1^{(1)}\oplus\cdots\oplus\Lambda_p^{(p)}\oplus 1_r)]_H$ we know $[1_m]_H\in P$. Now we claim that
$$[\tau\Upsilon(1_1\oplus\Lambda_2^{(2)}\oplus\cdots\oplus\Lambda_p^{(p)}\oplus 1_r)]_H=[\sigma_1\Upsilon_1^{(i_1)}]_H\text{,}$$
which will imply that $[\sigma_1\Upsilon_1^{(i_1)}]_H\in P$. Since the left hand side equals $[\tau\Upsilon_1^{(1)}]_H$ it suffices to show that $[\Lambda_1^{(1)}\tau^{-1}\sigma_1\Upsilon_1^{(i_1)}]_H=[1_{m-r_1(d-1)}]_H$. Write $\Lambda_1$ as $[T,\upsilon\vv{g},1_{1+r_1(d-1)}]$ for some tree $T$ and $\upsilon\vv{g}\in S_{1+r_1(d-1)}\wr G$. Since $\tau^{-1}\sigma_1(i')=i'-i_1+1$ for all $i_1\le i'\le i_1+r_1(d-1)$ we know $[\Lambda_1^{(1)}\tau^{-1}\sigma_1\Upsilon_1^{(i_1)}]_H = [T^{(1)},\tau^{-1}\sigma_1,T^{(i_1)}]_H$, i.e., the $\upsilon\vv{g}$ cancels out. Moreover this tells us that $\tau^{-1}\sigma_1$ lies in the image of the composition $\clone_{T^{(i_1)}}$ of cloning maps given by the $d$-carets of $T^{(i_1)}$, and hence this reduces to $[1_{m-r_1(d-1)},(\tau^{-1}\sigma_1)\clone_{T^{(i_1)}}^{-1},1_{m-r_1(d-1)}]_H=[1_{m-r_1(d-1)}]_H$. This finishes the proof of the claim that $[\sigma_1\Upsilon_1^{(i_1)}]_H \in P$. A parallel argument shows that $[\sigma_j\Upsilon_j^{(i_j)}]_H\in P$ for all $1\le j\le p$.
\end{proof}

In general a pair of distinct vertices $[x\Upsilon]_H,[x\Upsilon']_H \in \dlk [x]_H$ span an edge in $\dlk [x]_H$ if and only if the three vertices $[x]_H$, $[x\Upsilon]_H$ and $[x\Upsilon']_H$ lie in a common $2$-dimensional polysimplex, which happens either when $\Upsilon^{-1}\Upsilon'$ or $(\Upsilon')^{-1}\Upsilon$ is a non-trivial simple elementary merging (so the vertices span a triangle), or when $\Upsilon$ and $\Upsilon'$ have disjoint supports (so the vertices lie in a common square, as per the proof of Lemma~\ref{lem:disjoint_merge_span} for $p=2$). In the former case the supports of $\Upsilon$ and $\Upsilon'$ must be properly nested, so we get the following:

\begin{observation}\label{obs:nest_or_disjoint}
 If $[x\Upsilon]_H,[x\Upsilon']_H \in \dlk [x]_H$ span an edge in $\dlk [x]_H$ then the supports of $\Upsilon$ and $\Upsilon'$ are either properly nested or disjoint. If they are properly nested then $\Upsilon^{-1}\Upsilon'$ or $(\Upsilon')^{-1}\Upsilon$ is a non-trivial simple elementary merging. \qed
\end{observation}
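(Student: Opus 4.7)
The plan is to leverage the case analysis already recorded in the paragraph immediately preceding the observation. There it is noted that an edge in $\dlk[x]_H$ between the two distinct vertices $[x\Upsilon]_H$ and $[x\Upsilon']_H$ arises in exactly one of two ways: either their supports are disjoint (the common $2$-cell is a square), or one of $\Upsilon^{-1}\Upsilon'$, $(\Upsilon')^{-1}\Upsilon$ is a non-trivial simple elementary merging (the common $2$-cell is a triangle). Since in the first case there is nothing left to prove, the whole task reduces to showing that in the ``triangle'' case the supports must in fact be properly nested. The second sentence of the observation is then just a direct restatement of the hypothesis of this case.

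So assume without loss of generality that $\Upsilon^{-1}\Upsilon'$ is a non-trivial simple elementary merging of $x\Upsilon$. I would first use the fact that every simple merging of $x$ is $\sim_H$-equivalent to one of the form $\sigma\vv{h}\Lambda^{(i)}$ with connected support in $\{1,\dots,m\}$ (where $m=\feet([x]_H)$) to fix concrete representatives. The support of $\Upsilon^{-1}\Upsilon'$ is then a connected block in the indexing of the feet of $x\Upsilon$. Reading the equality $\Upsilon'=\Upsilon\cdot(\Upsilon^{-1}\Upsilon')$ geometrically, $\Upsilon'$ is obtained by first merging the feet in $\supp(\Upsilon)$ into a single foot of $x\Upsilon$ and then performing the simple merging $\Upsilon^{-1}\Upsilon'$.

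The main step is then to observe that simplicity of $\Upsilon'$ forces the connected support of $\Upsilon^{-1}\Upsilon'$ in $x\Upsilon$ to include the foot newly created by $\Upsilon$. Indeed, if it did not, then in the preimage indexing on $\{1,\dots,m\}$ the total set of feet touched by $\Upsilon'$ would be the disjoint union of $\supp(\Upsilon)$ and (a non-empty set disjoint from it), which is disconnected, contradicting the fact that $\Upsilon'$ is itself a simple merging. Once the support of $\Upsilon^{-1}\Upsilon'$ contains the merged foot, unfolding the preimage back to $\{1,\dots,m\}$ yields $\supp(\Upsilon)\subseteq\supp(\Upsilon')$, and non-triviality of $\Upsilon^{-1}\Upsilon'$ upgrades this to a strict inclusion; swapping the roles handles the $(\Upsilon')^{-1}\Upsilon$ case symmetrically.

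The main obstacle is the bookkeeping rather than any genuine difficulty: one has to be careful that ``support'' is invariant under $\sim_H$, and that the notion of a merged foot being ``inside'' $\supp(\Upsilon)$ really does translate cleanly between the feet of $x$ and the feet of $x\Upsilon$. This is clean provided one works with normalized representatives $\sigma\vv{h}\Lambda^{(i)}$, which is why I would set those up before making the nestedness argument.
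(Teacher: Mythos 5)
Your proposal is correct and follows essentially the same route as the paper, which treats the observation as immediate from the case analysis in the preceding paragraph (triangle versus square) and simply asserts the nestedness in the triangle case. You additionally supply the justification the paper leaves implicit—that simplicity of $\Upsilon'=\Upsilon(\Upsilon^{-1}\Upsilon')$ forces the foot created by $\Upsilon$ to lie in the support of $\Upsilon^{-1}\Upsilon'$—though note that the support of a simple merging $\sigma\vv{h}\Lambda^{(i)}$ is the $\sigma$-image of a consecutive block rather than a literal interval, so the "disconnected" language should really be phrased as $\Upsilon'$ decomposing, up to $\sim_H$ and a permutation, into a direct sum of two non-trivial mergings in distinct coordinates, contradicting simplicity.
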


Also note that, since simple elementary mergings have length at most $2$, we cannot have $[x\Upsilon]_H,[x\Upsilon']_H,[x\Upsilon'']_H \in \dlk [x]_H$ with supports pairwise properly nested.

Recall that a simplicial complex is a \emph{flag complex} if every collection of vertices that pairwise span edges spans a simplex.

\begin{lemma}
 $\dlk [x]_H$ is a flag complex.
\end{lemma}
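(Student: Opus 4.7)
The plan is to show that if $[x\Upsilon_1]_H, \dots, [x\Upsilon_n]_H \in \dlk [x]_H$ pairwise span edges, then they jointly span a simplex, i.e.\ they and $[x]_H$ lie in a common polysimplex of $\Stein{S_*\wr G}_H^{poly}$. The left $\Groupoid{S_*\wr G}$-action lets us assume $x = 1_m$, and each $\Upsilon_j$ is then a simple elementary merging with a well-defined support $U_j \subseteq \{1,\dots,m\}$ of size $d$ (if of length $1$) or $2d-1$ (if of length $2$).

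I would first analyze the combinatorics of the supports. By Observation~\ref{obs:nest_or_disjoint}, any two $U_j, U_k$ are either disjoint or properly nested. Since only the two sizes $d$ and $2d-1$ occur, no three supports can be pairwise properly nested (a chain $|U_a|<|U_b|<|U_c|$ needs three distinct sizes). Moreover two length-$1$ supports cannot both be properly contained in one length-$2$ support: the disjoint case would force $2d \le 2d-1$, while the properly nested case would give a forbidden three-chain. Therefore the $\Upsilon_j$ partition into clusters of size at most $2$ with the supports of distinct clusters pairwise disjoint; each size-$2$ cluster consists of a length-$2$ merging together with exactly one length-$1$ merging nested inside it.

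I would then construct an explicit polysimplex $P = \langle w \mid S_1,\dots,S_k\rangle$ containing every $[\Upsilon_j]_H$ and $[1_m]_H$, following the template of Lemma~\ref{lem:disjoint_merge_span}. Pick a permutation $\tau \in S_m$ bringing the cluster supports into consecutive blocks in the appropriate internal order, and set $w \defeq \tau \cdot \Upsilon^{\mathrm{tot}}$, where $\Upsilon^{\mathrm{tot}}$ is a direct sum of one-foot mergings, one per cluster of length equal to the largest merging in that cluster, with $1_1$'s at the remaining feet; this $w$ is the ``maximally-merged'' vertex and will be the minimum vertex of $P$. At the new foot corresponding to each cluster choose the elementary set matching the cluster's type: the second elementary set for a singleton length-$1$ cluster, the fourth for a singleton length-$2$ cluster, and the three-element third set for a size-$2$ cluster; at the other feet use $\{1_1\}$. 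Exactly as in the proof of Lemma~\ref{lem:perm_lubs}, the orderly hypothesis on $(H,A)$ lets us absorb the prefactors $\sigma_j\vv{h}_j$ of each $\Upsilon_j$ into $\sim_H$ by choosing the correct $h$-parameter inside each elementary set. Then $[1_m]_H$ is the vertex of $P$ obtained by taking the maximal element of every elementary set, and each $[\Upsilon_j]_H$ is obtained by taking $1_1$ outside its cluster and the matching element inside it.

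The main obstacle is the size-$2$ cluster in the last step: simultaneously realizing both the length-$1$ and the length-$2$ vertex of such a cluster inside one polysimplex forces a single choice of $h$-parameter in the three-element elementary set. This is possible precisely because the length-$2$ element of that set lies above the length-$1$ element in the poset, mirroring the nested structure of the cluster --- the same structural feature that ensures size-$2$ clusters exist in the first place.
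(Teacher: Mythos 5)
Your proposal follows essentially the same route as the paper's proof: reduce to $x=1_m$, absorb the prefactors $\sigma_j\vv{h}_j$ using $\sim_H$ and orderliness, use Observation~\ref{obs:nest_or_disjoint} together with the length-at-most-$2$ bound to organize the mergings into clusters (pairwise disjoint maximal mergings, each containing at most one properly nested length-$1$ merging), run the construction of Lemma~\ref{lem:disjoint_merge_span} on the maximal mergings, and enlarge the elementary set at each size-two cluster to the three-element one. The only slip is in your final vertex identification: in the polysimplex $\langle w\mid S_1,\dots,S_k\rangle$ with \emph{minimal} vertex $w=\tau\Upsilon^{\mathrm{tot}}$, the vertex $[x\Upsilon_j]_H$ is realized by taking the \emph{maximal} element of $S_i$ at every cluster other than the one containing $\Upsilon_j$ (so as to undo the merging on those feet) and taking $1_1$ (if $\Upsilon_j$ is the outer merging of its cluster) or the middle element of the three-element set (if it is the nested length-$1$ merging) at its own cluster; taking $1_1$ outside its cluster, as you wrote, would leave those feet fully merged and return a different vertex. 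With that correction your argument coincides with the paper's.
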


\begin{proof}
 Thanks to the left action of $\Groupoid{S_*\wr G}$ on $\Stein{S_*\wr G}_H^{poly}$ we can assume $x=1_m$. Let $\sigma_1\vv{h}_1\Upsilon_1^{(i_1)},\dots,\sigma_q\vv{h}_q\Upsilon_q^{(i_q)}$ be non-trivial simple elementary mergings of $1_m$ such that the vertices $[\sigma_j\vv{h}_j\Upsilon_j^{(i_j)}]_H$ of $\dlk [1_m]_H$ pairwise span edges. We need to show that all these vertices and $[1_m]_H$ lie in a common polysimplex. As in the proof of Lemma~\ref{lem:disjoint_merge_span} we can assume $\vv{h}_j=\vv{\id}$ for all $j$. By Observation~\ref{obs:nest_or_disjoint} the supports of the $\sigma_j\Upsilon_j^{(i_j)}$ are pairwise either properly nested or disjoint. Denote the support of $\sigma_j\Upsilon_j^{(i_j)}$ by $U_j$. Assume without loss of generality that the $\sigma_j\Upsilon_j^{(i_j)}$ are ordered so that there exists $1\le p\le q$ such that $U_1,\dots,U_p$ are pairwise disjoint and for each $p<j\le q$ the set $U_j$ is properly contained in one of the $U_1,\dots,U_p$. Thanks to Lemma~\ref{lem:disjoint_merge_span} there is a polysimplex containing $[1_m]_H$ and $[\sigma_j\Upsilon_j^{(i_j)}]_H$ for all $1\le j\le p$ (but not necessarily for $p<j\le q$). Say the polysimplex is $P=\langle \tau\Upsilon\mid S_1,\dots,S_{p+r}\rangle$, with $\tau$, $\Upsilon$, $r$ and $S_j$ as in the proof of Lemma~\ref{lem:disjoint_merge_span}. In particular for $1\le j\le p$ we have $S_j=\{1_1,\Lambda_j\}$ with $[\Lambda_j]_H=[\Upsilon_j^{-1}]_H$.

 If $p=q$ we are done so suppose $p<q$. Each of $U_{p+1},\dots,U_q$ is properly contained in one of $U_1,\dots,U_p$. Also, for each $1\le j\le p$ at most one of $U_{p+1},\dots,U_q$ can be properly contained in $U_j$, since simple elementary mergings have length at most $2$. Hence $q-p\le p$, and without loss of generality $U_{p+j}\subsetneq U_j$ for each $1\le j\le q-p$. Now for each such $j$, we know $\Big(\sigma_{p+j}\Upsilon_{p+j}^{(i_{p+j})}\Big)^{-1}\sigma_j\Upsilon_j^{(i_j)}$ is a simple elementary merging, hence its inverse is a simple elementary splitting, so we can choose $\Lambda_j'$ such that $[(\Lambda_j')^{(i_j)}]_H=\Big[\Big(\sigma_j\Upsilon_j^{(i_j)}\Big)^{-1}\sigma_{p+j}\Upsilon_{p+j}^{(i_{p+j})}\Big]_H$ and $S_j'\defeq\{1_1,\Lambda_j',\Lambda_j\}$ is an elementary set. For each $q-p<k\le q$ set $S_k'\defeq S_k$. Now let $Q=\langle \tau\Upsilon\mid S_1',\dots,S_{p+r}'\rangle$. Since $P\subseteq Q$ we know $[1_m]_H$ and $[\sigma_j\Upsilon_j^{(i_j)}]_H$ are in $Q$ for all $1\le j\le p$. Now let $p<p+j\le q$. Then
\begin{align*}
 & [\sigma_{p+j}\Upsilon_{p+j}^{(i_{p+j})}]_H \\
 =& \Big[\sigma_j\Upsilon_j^{(i_j)}\Big(\sigma_j\Upsilon_j^{(i_j)}\Big)^{-1}\sigma_{p+j}\Upsilon_{p+j}^{(i_{p+j})}\Big]_H \\
 =& [\sigma_j\Upsilon_j^{(i_j)}(\Lambda_j')^{(i_j)}]_H \\
 =& [\tau\Upsilon(\Lambda_1^{(1)}\oplus\cdots\oplus\Lambda_{j-1}^{(j-1)}\oplus (\Lambda_j')^{(j)}\oplus \Lambda_{j+1}^{(j+1)}\oplus\cdots\oplus \Lambda_p^{(p)}\oplus 1_r)]_H\text{.}
\end{align*}
Hence $[\sigma_{p+j}\Upsilon_{p+j}^{(i_{p+j})}]_H\in Q$ and we are done.
\end{proof}

In order to understand higher connectivity of these descending links, we will use the following result, due to Belk and Forrest \cite[Theorem~4.9]{belkforrest15}, which Belk and Matucci used in \cite{belk16} in the R\"over group case.

\begin{cit}\cite[Theorem~6.2]{belk16}\label{cit:ground}
 Let $\Delta$ be a flag complex and let $c,k\ge 1$. Suppose $\Delta$ admits a $ck$-simplex $\Sigma$ such that for every vertex $v$ of $\Delta$, $v$ shares an edge with all but at most $k$ vertices of $\Sigma$. Then $\Delta$ is $(c-1)$-connected.
\end{cit}

In \cite{belkforrest15} and \cite{belk16} $\Delta$ is assumed to be finite, but since every homotopy sphere in a flag complex lies in a finite flag subcomplex, the result is equally true for infinite $\Delta$.

We will exhibit a simplex $\Sigma$ in $\dlk [x]_H$ that will end up working. Set $m\defeq\feet([x]_H)$, and for each $0\le i\le \lfloor\frac{m}{d}\rfloor-1$ let $\mho_i\defeq [1_m,\vv{\id},\vertwedge_{1+di}^m]$, so the support of $\mho_i$ is $\{di+1,\dots,di+d\}$. Since these are disjoint for different $i$, Lemma~\ref{lem:disjoint_merge_span} says that all the $[x\mho_i]_H$ span a simplex $\Sigma$ in $\dlk [x]_H$. The dimension of $\Sigma$ is $1$ less than its number of vertices, i.e., $\lfloor\frac{m}{d}\rfloor-1$.

See Figure~\ref{fig:ground} for an example of this $\Sigma$.

\begin{figure}[htb]
 \centering
 \begin{tikzpicture}[line width=0.8pt]
 \draw (0,0) -- (.75,-.5) -- (1.5,0)   (.5,0) -- (.75,-.5) -- (1,0)   (2,0) -- (2.75,-.5) -- (3.5,0)   (2.5,0) -- (2.75,-.5) -- (3,0)   (4,0) -- (4.75,-.5) -- (5.5,0)   (4.5,0) -- (4.75,-.5) -- (5,0);
 \end{tikzpicture}
 \caption{For $x=1_{12}$ and $d=4$ (so $r=0$), the supports of $\mho_0$, $\mho_1$ and $\mho_2$ are $\{1,2,3,4\}$, $\{5,6,7,8\}$ and $\{9,10,11,12\}$. This picture is of $[\mho_0\oplus\mho_1\oplus\mho_2]$, which describes (the barycenter of) the simplex $\Sigma$.}
 \label{fig:ground}
\end{figure}
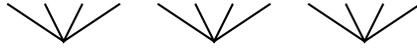

To see that this $\Sigma$ works, we need to understand when a vertex of of $\dlk [x]_H$ shares an edge with a vertex of $\Sigma$.

\begin{lemma}\label{lem:compatible}
 Every vertex of $\dlk [x]_H$ shares an edge with all but at most $2(d-1)+1$ vertices of $\Sigma$.
\end{lemma}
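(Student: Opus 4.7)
The plan is to reduce the bound to a purely combinatorial count of supports, using the sufficient edge condition from Lemma~\ref{lem:disjoint_merge_span}. First I would fix an arbitrary vertex $v = [x\Upsilon]_H$ of $\dlk [x]_H$, where $\Upsilon$ is a non-trivial simple elementary merging, and denote its support by $U \subseteq \{1,\dots,m\}$. Since simple elementary splittings (and hence simple elementary mergings) have length at most $2$, the length $r$ of $\Upsilon$ is $1$ or $2$, so $|U| = 1 + r(d-1) \le 2d-1$.

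Next, I would compare $U$ with the supports $T_i = \{di+1,\dots,di+d\}$ of the vertices $[x\mho_i]_H$ of $\Sigma$. These $T_i$ are pairwise disjoint blocks of size $d$, so each element of $U$ lies in at most one $T_i$. In particular the number of indices $i$ for which $T_i \cap U \ne \emptyset$ is at most $|U| \le 2d-1$.

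Finally, for every index $i$ with $T_i \cap U = \emptyset$, Lemma~\ref{lem:disjoint_merge_span} applied with $p = 2$ produces a common square in $\Stein{S_*\wr G}_H^{poly}$ containing $[x]_H$, $v$, and $[x\mho_i]_H$, and hence $v$ spans an edge with $[x\mho_i]_H$ in $\dlk[x]_H$. Consequently $v$ fails to share an edge with at most $2d-1 = 2(d-1)+1$ vertices of $\Sigma$, which is exactly the claimed bound.

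I do not anticipate any real obstacle: the only geometric input is the sufficient edge condition from Lemma~\ref{lem:disjoint_merge_span}, after which the length bound on simple elementary mergings (converting into $|U|\le 2d-1$) combines with disjointness of the blocks $T_i$ to finish the count. Notably, I would not even need to invoke the necessary condition from Observation~\ref{obs:nest_or_disjoint} nor analyze the more delicate case of properly nested supports, since sufficient conditions already do the job on the nose.
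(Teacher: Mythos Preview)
Your proposal is correct and follows essentially the same argument as the paper's own proof: bound the support of an arbitrary simple elementary merging by $1+r(d-1)\le 2(d-1)+1$, use that the supports of the $\mho_i$ are pairwise disjoint blocks so at most $|U|$ of them can meet $U$, and conclude via Lemma~\ref{lem:disjoint_merge_span} that disjoint supports yield an edge. The only difference is cosmetic---the paper leaves the appeal to Lemma~\ref{lem:disjoint_merge_span} implicit, whereas you spell it out.
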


\begin{proof}
 Let $[x\sigma\vv{h}\Upsilon^{(j)}]_H$ be an arbitrary vertex of $\dlk [x]_H$, with $\sigma\vv{h}\in S_m\wr H$ (for $m=\feet(x)$) and $\Upsilon$ a non-trivial $1$-foot elementary merging. Say $\Upsilon$ has length $r$, so $r\le 2$. We claim that the support of $\sigma\vv{h}\Upsilon^{(j)}$ is disjoint from the supports of all but at most $2(d-1)+1$ of the $\mho_i$. The support of $\mho_i$ is $\{di+1,\dots,di+d\}$ and the support of $\sigma\vv{h}\Upsilon^{(j)}$ is $\{\sigma(j')\mid j\le j'\le j+r(d-1)\}$. Since the $\{di+1,\dots,di+d\}$ are themselves disjoint for different $i$ values, $\{\sigma(j')\mid j\le j'\le j+r(d-1)\}$ can intersect $\{di+1,\dots,di+d\}$ for at most $|\{\sigma(j')\mid j\le j'\le j+r(d-1)\}|=r(d-1)+1\le 2(d-1)+1$ values of $i$.
\end{proof}

As a remark, this would not have worked if we had not passed from $\Stein{S_*\wr G}_H$ to $\Stein{S_*\wr G}_H^{poly}$. Simple elementary merges have bounded size of support, whereas elementary merges in theory can have arbitrarily large supports (here support means the union of supports of the simple factors). For example if $x$ has $md$ feet then the (non-simple) elementary merging $[1_d,\vv{\id},\vertwedge_1^d]\oplus\cdots\oplus[1_d,\vv{\id},\vertwedge_1^d]$ (with $m$ summands) has support equal to all of $\{1,\dots,m\}$.

Now we can prove our higher connectivity result.

\begin{proposition}[Higher connectivity of descending links]\label{prop:desc_lks_hi_conn}
 If $H$ is $A$-coarsely self-similar for some finite $A\subseteq V_d\cap\Aut(\tree_d)$ and $(H,A)$ is orderly, then for each $n\in\N$ there exists $t\in\R$ such that for all vertices $[x]_H$ in $\Stein{S_*\wr G}_H^{poly}$ with $\feet([x]_H)\ge t$ we have that $\dlk [x]_H$ is $n$-connected.
\end{proposition}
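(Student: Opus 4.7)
The plan is to apply Citation~\ref{cit:ground} directly to the flag complex $\Delta = \dlk [x]_H$, using the simplex $\Sigma$ constructed just before Lemma~\ref{lem:compatible}. All three required ingredients have essentially been assembled in the preceding material: $\dlk [x]_H$ is a flag complex by the lemma just proved, the collection of disjoint-support mergings $\mho_0,\dots,\mho_{\lfloor m/d\rfloor}$ (with $m \defeq \feet([x]_H)$) spans a simplex $\Sigma$ in $\dlk [x]_H$ of dimension $\lfloor m/d\rfloor$ by Lemma~\ref{lem:disjoint_merge_span}, and Lemma~\ref{lem:compatible} bounds by $k \defeq 2(d-1)+1$ the number of vertices of $\Sigma$ to which an arbitrary vertex of $\dlk [x]_H$ can fail to be joined by an edge.

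Given $n \in \N$, I would set $c \defeq n+1$ in the statement of Citation~\ref{cit:ground}, so that its conclusion of $(c-1)$-connectedness becomes exactly the $n$-connectedness I want. The citation then requires $\Sigma$ to be at least a $ck$-simplex, i.e.\ to satisfy $\dim \Sigma \ge (n+1)(2d-1)$. Since $\dim \Sigma = \lfloor m/d\rfloor$, it suffices to choose
\[
 t \defeq d(n+1)(2d-1),
\]
so that whenever $\feet([x]_H) \ge t$ we have $\lfloor \feet([x]_H)/d\rfloor \ge (n+1)(2d-1) = ck$. Citation~\ref{cit:ground} then delivers the desired $n$-connectedness of $\dlk [x]_H$.

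There is no genuine obstacle remaining at this stage: all of the real work has already been done in constructing $\Stein{S_*\wr G}_H^{poly}$ so that the descending link is flag, in producing the ground simplex $\Sigma$ out of pairwise disjoint $d$-caret mergings, and in proving the combinatorial bound of Lemma~\ref{lem:compatible} (which crucially relied on passing from $\Stein{S_*\wr G}_H$ to $\Stein{S_*\wr G}_H^{poly}$ so that simple elementary mergings have uniformly bounded support of size at most $2(d-1)+1$). The proof of the proposition is therefore essentially a bookkeeping application of Citation~\ref{cit:ground}, with the only mild subtlety being that $t$ must be chosen as a function of both $n$ and the arity $d$, since $k$ itself depends on $d$.
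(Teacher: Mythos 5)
Your proposal is correct and follows essentially the same route as the paper: both apply Citation~\ref{cit:ground} to the flag complex $\dlk [x]_H$ with the ground simplex $\Sigma$ spanned by the disjoint-support mergings $\mho_i$ and the bound $k=2(d-1)+1$ from Lemma~\ref{lem:compatible}. The only difference is cosmetic — you solve for an explicit threshold $t=d(n+1)(2d-1)$, while the paper states the resulting connectivity as $\bigl\lfloor\lfloor\feet([x]_H)/d\rfloor/(2(d-1)+1)\bigr\rfloor-1$ and takes $t$ large enough that this is at least $n$.
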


\begin{proof}
 Since $\Sigma$ has dimension $\lfloor\frac{\feet([x]_H)}{d}\rfloor-1$, Citation~\ref{cit:ground} and Lemma~\ref{lem:compatible} say that $\dlk [x]_H$ is
$$\left(\left\lfloor\frac{\left\lfloor\frac{\feet([x]_H)}{d}\right\rfloor-1}{2(d-1)+1}\right\rfloor-1\right)-\text{connected,}$$
so if we take $t$ large enough that
$$\left\lfloor\frac{\left\lfloor\frac{t}{d}\right\rfloor-1}{2(d-1)+1}\right\rfloor\ge n+1$$
then we get that $\feet([x]_H)\ge t$ implies $\dlk [x]_H$ is $n$-connected.
\end{proof}

We can now prove our main theorem about R\"over--Nekrashevych groups of type $\F_\infty$.

\begin{theorem}\label{thrm:good_H}
 Let $G\le \Aut(\tree_d)$ be self-similar and suppose there exists $H\le G$ such that $H$ is nuclear in $G$ (see Definition~\ref{def:nuclear}) and $A$-coarsely self-similar (see Definition~\ref{def:Acss}) for some finite $A\subseteq V_d\cap\Aut(\tree_d)$, and $(H,A)$ is orderly (see Definition~\ref{def:orderly}). If $H$ is of type $\F_\infty$ then so is $V_d(G)$.
\end{theorem}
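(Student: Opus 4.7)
My plan is to apply the combined Brown's Criterion and Morse Lemma (Lemma~\ref{lem:morse}) to the action of $V_d(G) = \Thomp{S_*\wr G}$ on the polysimplicial complex $\Stein{S_*\wr G}_H^{poly}$, using the Morse function $\feet$. The action of $V_d(G) = P_1^1$ on $|P_H^1|$ from the left preserves both the poset $P_H^1$ and the notion of elementary core (which is defined purely in terms of the right $\sim_H$-structure), so the action restricts to $\Stein{S_*\wr G}_H$ and moreover respects the polysimplicial coarsening, giving a cellular action on $\Stein{S_*\wr G}_H^{poly}$. The function $\feet$ is $V_d(G)$-invariant (left multiplication by an element of $P_1^1$ preserves the number of feet) and is a Morse function on $\Stein{S_*\wr G}_H^{poly}$ by Observation~\ref{obs:feet_morse}.

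Next I would verify in turn the three hypotheses of Lemma~\ref{lem:morse}. For contractibility of the ambient space, Corollary~\ref{cor:stein_cible} (which uses that $H$ is nuclear, $A$-coarsely self-similar and that $(H,A)$ is orderly) gives that $\Stein{S_*\wr G}_H$ is contractible, and Observation~\ref{obs:subdiv} says this is just a simplicial subdivision of $\Stein{S_*\wr G}_H^{poly}$, so the polysimplicial complex is contractible as well. For cocompactness of sublevel sets, Lemma~\ref{lem:cocpt} gives cocompactness of $|P_H^1|^{\feet \le q}$; since each polysimplex $\langle x \mid S_1,\dots,S_m\rangle$ is determined by $[x]_H$ and finitely much extra data bounded in terms of $\feet([x]_H)$, the cocompactness inherits to $(\Stein{S_*\wr G}_H^{poly})^{\feet \le q}$. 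For stabilizers, the stabilizer of a polysimplex is the intersection of the stabilizers of its vertices in $\Stein{S_*\wr G}_H$, which by Lemma~\ref{lem:stabs} is a finite-index subgroup of some $S_n \wr H$; since $H$ is of type $\F_\infty$ by hypothesis, Corollary~\ref{cor:stabs} ensures these stabilizers are of type $\F_\infty$.

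For the descending link condition, I would directly invoke Proposition~\ref{prop:desc_lks_hi_conn}, which already produces, for each $n$, a threshold $t$ such that every vertex $[x]_H$ with $\feet([x]_H) \ge t$ has $n$-connected descending link in $\Stein{S_*\wr G}_H^{poly}$. With all three hypotheses of Lemma~\ref{lem:morse} satisfied, the conclusion that $V_d(G)$ is of type $\F_\infty$ follows at once.

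Honestly, at this stage of the paper essentially all the work has been done in the preceding sections; the theorem becomes a routine assembly of already-proven pieces. The only mild obstacle is being careful that the properties proven for $|P_H^1|$ (cocompactness, stabilizers) transfer correctly to the coarser polysimplicial structure $\Stein{S_*\wr G}_H^{poly}$, and that one appeals to the right combination of the three hypotheses on $H$ at each step (nuclearity drives contractibility via Lemma~\ref{lem:poset_cible}; $A$-coarse self-similarity drives the key finiteness Lemma~\ref{lem:fin_splits}; orderliness drives uniqueness of elementary cores via Proposition~\ref{prop:lubs} and hence the very existence of the Stein--Farley complex and the descending link analysis). The genuinely substantive step, which lives in Section~\ref{sec:stein}, is Proposition~\ref{prop:desc_lks_hi_conn}, but that is already in hand.
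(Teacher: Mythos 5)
Your proposal is correct and follows exactly the paper's own proof: apply Lemma~\ref{lem:morse} to the action of $V_d(G)$ on $\Stein{S_*\wr G}_H^{poly}$ with Morse function $\feet$, obtaining contractibility from Corollary~\ref{cor:stein_cible} and Observation~\ref{obs:subdiv}, cocompactness from Lemma~\ref{lem:cocpt}, type $\F_\infty$ stabilizers from Lemmas~\ref{lem:stabs} and Corollary~\ref{cor:stabs}, and the descending link condition from Proposition~\ref{prop:desc_lks_hi_conn}. No gaps; the assembly and the transfer of properties from $|P_H^1|$ to the polysimplicial structure are handled just as in the paper.
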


\begin{proof}
 Consider the action of $V_d(G)$ on $\Stein{S_*\wr G}_H^{poly}$. Since $\Stein{S_*\wr G}_H^{poly}$ has $\Stein{S_*\wr G}_H$ as a simplicial subdivision (Observation~\ref{obs:subdiv}), it is contractible (Corollary~\ref{cor:stein_cible}) so we can try and apply Lemma~\ref{lem:morse} using the Morse function $\feet$ (see Observation~\ref{obs:feet_morse}). Since sublevel sets in $|P_H^1|$ are cocompact (Lemma~\ref{lem:cocpt}), the same is true in $\Stein{S_*\wr G}_H^{poly}$. Since stabilizers in $V_d(G)$ of vertices in $|P_H^1|$ are of type $\F_\infty$ (Lemma~\ref{lem:stabs}), the same is true in $\Stein{S_*\wr G}_H^{poly}$. The proof in Corollary~\ref{cor:stabs} that every stabilizer of a simplex in $|P_H^1|$ has finite index in some vertex stabilizer also establishes that every stabilizer of a polysimplex in $\Stein{S_*\wr G}_H^{poly}$ has finite index in a vertex stabilizer, and so all cell stabilizers are of type $\F_\infty$. Finally Proposition~\ref{prop:desc_lks_hi_conn} establishes the last condition of Lemma~\ref{lem:morse}.
\end{proof}

When $G$ itself is of type $\F_\infty$, all the necessary conditions are met almost trivially:

\begin{corollary}\label{cor:Finfty_inherited}
 Let $G\le \Aut(\tree_d)$ be self-similar and of type $\F_\infty$. Then $V_d(G)$ is of type $\F_\infty$.
\end{corollary}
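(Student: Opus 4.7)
The plan is to apply Theorem~\ref{thrm:good_H} with the choices $H = G$ and $A = \{\id\}$, so the entire content of the corollary reduces to verifying that these choices satisfy all three hypotheses of that theorem. Nothing else is needed, since then the hypothesis that $H$ is of type $\F_\infty$ becomes exactly the assumption that $G$ is of type $\F_\infty$.

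First I would check that $G$ is nuclear in itself: given any $g \in G$, take $T$ to be the trivial rooted subtree consisting of just the root $\varnothing$; then the only leaf address is $\varnothing$ and the state $g_\varnothing = g$ lies in $G$, so Definition~\ref{def:nuclear} is satisfied. Next I would verify that $G$ is $\{\id\}$-coarsely self-similar: for any $g \in G$ with wreath recursion $g = \sigma(g_1,\dots,g_d)$, self-similarity of $G$ gives $g_i \in G$ for every $i$, so each $g_i$ lies in $G \cup \{\id\}$ and Definition~\ref{def:Acss} holds trivially.

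It then remains to check that the pair $(G, \{\id\})$ is orderly in the sense of Definition~\ref{def:orderly}. The first condition is that every $a \in A = \{\id\}$ has trivial wreath recursion $\id = \id(\id,\dots,\id)$, which is immediate. The second condition asks that for every $h \in G$ written as $h = \rho(h)(h_1,\dots,h_d)$, the entry $h_{\rho(h)(1)}$ lies in $A \cup H = G$ and the other entries lie in $H = G$; again, self-similarity of $G$ ensures all $h_i \in G$, so this is automatic. The third condition, which concerns pairs of distinct elements of $A$, is vacuous because $|A| = 1$.

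With the hypotheses verified, Theorem~\ref{thrm:good_H} applies and yields that $V_d(G)$ is of type $\F_\infty$. There is no real obstacle here; the only subtlety worth noting is that this argument relies on all the preceding machinery of Sections~\ref{sec:large_cpx} and~\ref{sec:stein}, whereas (as Remark~\ref{rmk:G_F_infty} observes) a shorter direct proof via a $d$-ary analog of \cite[Proposition~5.9]{witzel14} would also be possible.
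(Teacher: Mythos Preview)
Your proof is correct and follows exactly the same approach as the paper's own proof: apply Theorem~\ref{thrm:good_H} with $H=G$ and $A=\{\id\}$, noting that self-similarity of $G$ makes it $\{\id\}$-coarsely self-similar, that any group is nuclear in itself, and that $(G,\{\id\})$ is trivially orderly. The paper compresses all of this into a single sentence, whereas you have helpfully spelled out each verification (including the three orderly conditions and the reference to Remark~\ref{rmk:G_F_infty}), but the content is identical.
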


\begin{proof}
 Since $G$ is self-similar it is $\{\id\}$-coarsely self-similar (and $(G,\{\id\})$ is trivially orderly), and any $G$ is nuclear in itself, so this follows from Theorem~\ref{thrm:good_H} applied with $H=G$.
\end{proof}

\begin{remark}
 In \cite{thumann17}, Thumann develops a unified framework, utilizing operads, for proving that a variety of Thompson-like groups are of type $\F_\infty$. It is an interesting question which $V_d(G)$ fit into this framework. We believe that if $G$ itself is of type $\F_\infty$ then one could likely use Thumann's techniques to recover our result that $V_d(G)$ is too. For the case when $G$ is not of type $\F_\infty$, but some orderly $A$-coarsely self-similar subgroup $H$ that is nuclear in $G$ is of type $\F_\infty$, it is much less clear whether one could apply Thumann's approach. We suspect not, since it seems like Thumann's framework would demand that the stabilizers be built out of $G$ rather than getting to choose a nice $H$. One could probably modify Thumann's machine to handle these groups, by first translating the approach here and in \cite{belk16} into the language of operads. It would be interesting to see if using operads could yield $\F_\infty$ for any R\"over--Nekrashevych groups beyond the ones handled here.
\end{remark}


\section{Examples}\label{sec:ex}

In this section we discuss some examples of R\"over--Nekrashevych groups that can be seen to be of type $\F_\infty$ using our results. As before, we have a fixed self-similar $G\le \Aut(\tree_d)$ and a fixed $H\le G$.

\subsection{$G$ of type $\F_\infty$}\label{sec:G_Finfty}

First we consider the case when $G$ is of type $\F_\infty$, so $V_d(G)$ is also of type $\F_\infty$ thanks to Corollary~\ref{cor:Finfty_inherited}. Since finite groups are of type $\F_\infty$, we have the following example, which is worth remarking on first.

\begin{example}[FSS R\"over--Nekrashevych groups]\label{ex:fss_Finfty}
 Besides the R\"over group, another previously handled example from the literature concerns R\"over--Nekrashevych groups that are FSS groups, as in \cite{farley15,hughes09}. As in Example~\ref{ex:fss_gp}, let $D\le S_d$ and embed $D$ into $\Aut(\tree_d)$ by by viewing the vertices of $\tree_d$ as words in the alphabet $\{1,\dots,d\}$. Let $\iota$ be the embedding and $G$ its image. Then $G$ is self-similar via the wreath recursion $g=(\iota^{-1}(g))(g,\dots,g)$, so it makes sense to consider the R\"over--Nekrashevych group $V_d(G)$. It turns out $V_d(G)$ is the FSS group determined by $G$ \cite[Remark~2.13]{farley15}, and Farley and Hughes proved it is of type $\F_\infty$ \cite[Corollary~6.6]{farley15}. Since such $G$ are finite, they are of type $\F_\infty$, so the Farley--Hughes result is a special case of our results here. Since every finite group embeds in some $S_d$, we can phrase this all as, every finite group produces a R\"over--Nekrashevych group of type $\F_\infty$. As a remark, many $V_d(G)$ are just isomorphic to $V_d$; this happens if and only if the action of $D$ on $\{1,\dots,d\}$ is free \cite[Theorem~1]{bleak17}.
\end{example}

Other examples of self-similar groups of type $\F_\infty$ that have been considered in the literature include free groups \cite{vorobets10,steinberg11}, free products of arbitrarily many copies of $\Z/2\Z$ \cite{savchuk11}, and $(\Z/3\Z)*(\Z/3\Z)$ \cite{bondarenko17}. In all these cases, the groups are not only self-similar but are even so called automaton groups. More generally, we know now from Corollary~\ref{cor:vfree} that every finitely generated virtually free group is self-similar, and of type $\F_\infty$ since finiteness properties are invariant under passing to finite index. Hence our main example here is:

\begin{example}[Virtually free]\label{ex:vfree}
 For any finitely generated virtually free group $G$, we can embed $G$ into $\Aut(\tree_d)$ as a self-similar group for some $d$, and moreover any R\"over--Nekrashevych group $V_d(G)$ for $G$ a finitely generated virtually free group is of type $\F_\infty$.
\end{example}

\subsection{\v Suni\'c groups}\label{sec:sunic}

The Grigorchuk group admits a number of generalizations, including the infinite family of \v Suni\'c groups briefly discussed in Example~\ref{ex:sunic}. Here we discuss these in more detail, and prove that they fit into our framework and hence yield R\"over--Nekrashevych groups of type $\F_\infty$.

\begin{definition}[\v Suni\'c group]
 Let $A$ be the cylic group of order $d$, say $A=\langle a\rangle$, and view $A$ as a subgroup of $\Aut(\tree_d)$ via the embedding $a\mapsto [\vertwedge_1^1,(1\cdots d),\vertwedge_1^1]$. Let $B$ be a finitely generated group and $\rho\colon B\to B$ an automorphism, and suppose there exists a homomorphism $\omega\colon B\to A$ such that no non-trivial $\rho$-orbit lies entirely in $\ker(\omega)$. Map $B$ to $\Aut(\tree_d)$ by declaring the wreath recursion $b=(\omega(b),\id,\dots,\id,\rho(b))$. This defines a faithful action on $\tree_d$ since we have assumed that for any $1_B\ne b\in B$ there exists $n$ such that $\omega(\rho^n(b))\ne 1_A$. Viewing $B$ as a subgroup of $\Aut(\tree_d)$ in this way, define $G_{\omega,\rho}\defeq\langle A,B\rangle$. This is the \emph{\v Suni\'c group} for $\omega$ and $\rho$. (Since $A$ and $B$ are the range and domain of $\omega$ we need not include them in the subscript.)
\end{definition}

Note that $B$ must be abelian and satisfy $B^d=\{1_B\}$, since $[B,B]$ and $B^d$ are characteristic and lie in $\ker(\omega)$. Additionally, the $G_{\omega,\rho}$ act transitively on every level of $\tree_d$ and as such are infinite. These groups were introduced by \v Suni\'c in \cite{sunic07} with the added assumptions that $d$ is prime, and hence $B\cong A^m$ for some $m$. \v Suni\'c also considered specific $\omega$ and $\rho$ arising from certain concrete matrices, and the groups $G_{\omega,\rho}$ for these specific $\omega$ and $\rho$ were called \v Suni\'c groups in \cite{francoeur16}.

\begin{lemma}\label{lem:sunic_works}
 For any \v Suni\'c group $G_{\omega,\rho}$ we have that $B$ is $A$-coarsely self-similar, of type $\F_\infty$, and nuclear in $G_{\omega,\rho}$, and $(B,A)$ is orderly.
\end{lemma}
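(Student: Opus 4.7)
The plan is to verify in turn each of the four assertions---$B$ of type $\F_\infty$, $A$-coarsely self-similar, $(B,A)$ orderly, and $B$ nuclear in $G_{\omega,\rho}$---with the first three by direct inspection and nuclearity via a length-reduction argument in the spirit of the contraction property of \v Suni\'c groups. The hard part will be nuclearity.

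\textbf{Direct verifications.} The subgroups $[B,B]$ and $B^d$ are characteristic in $B$, hence $\rho$-invariant, and both lie in $\ker(\omega)$ (since $A$ is abelian of order $d$). The standing hypothesis that no non-trivial $\rho$-orbit lies entirely in $\ker(\omega)$ forces both subgroups to be trivial, so $B$ is finitely generated abelian of exponent dividing $d$, hence finite and of type $\F_\infty$. The $A$-coarse self-similarity of $B$ is immediate from the wreath recursion $b=(\omega(b),\id,\dots,\id,\rho(b))$, each entry of which lies in $A\cup B$. For $(B,A)$ orderly, condition (i) holds because each $a^i\in A$ has wreath recursion $(1\,\cdots\,d)^i(\id,\dots,\id)$; (ii) holds because the top permutation of every $b\in B$ is trivial (so $\rho(b)(1)=1$), the first entry $\omega(b)$ of the wreath recursion lies in $A$, and the remaining entries lie in $B$; and (iii) holds because $(1\,\cdots\,d)^i$ sends $1$ to $1+i\pmod d$, which are pairwise distinct for $0\le i<d$.

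\textbf{Nuclearity.} For $g\in G_{\omega,\rho}$, I would first write $g$ in an alternating normal form $g=a_0 b_1 a_1\cdots b_k a_k$ with $a_i\in A$ and $b_i\in B$, and set $\ell(g):=k$. Using the iterated formula $(xy)_v=x_{y(v)}y_v$ together with the vanishing of states of elements of $A$, a direct computation shows that the state of $g$ at any first-level position $j$ equals a product $(b_1)_{v_1}(b_2)_{v_2}\cdots(b_k)_{v_k}$, where each $v_i\in\{1,\dots,d\}$ is determined by the cumulative top permutations of the $a_j$'s. Each factor lies in $\{\omega(b_i),\id,\rho(b_i)\}\subseteq A\cup B$, so after combining adjacent factors of the same type the result is an alternating word with at most $\lceil k/2\rceil$ many $B$-blocks. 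Iterating this $O(\log\ell(g))$ levels will reduce $\ell$ to at most $1$, and when $\ell(h)\le 1$ the same computation (with $k=1$) shows every first-level state of $h$ already lies in $A\cup B$. Hence there is a finite depth $N$ at which every state of $g$ at a leaf of the depth-$N$ complete subtree of $\tree_d$ lies in $A\cup B$.

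\textbf{From $A\cup B$ to $B$.} To upgrade $A\cup B$-valued leaves to $B$-valued leaves, attach a $d$-caret at each leaf where the state lies in $A\setminus\{\id\}$; condition (i) of orderly then guarantees the new level-$1$ states are all $\id\in B$, while leaves whose state is already in $B$ are left alone. The resulting finite rooted complete subtree has every leaf state in $B$, establishing nuclearity. The crux will be the $\ell$-halving computation, which amounts to the classical contraction argument for \v Suni\'c groups rewritten in alternating form.
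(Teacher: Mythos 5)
Your proposal is correct and follows essentially the same route as the paper: direct verification of $\F_\infty$, coarse self-similarity, and orderliness from the wreath recursions, then a contraction argument showing all deep enough states land in $A\cup B$, followed by attaching a $d$-caret at each leaf carrying a nontrivial element of $A$ (whose level-one states are all $\id$) to land in $B$. The only difference is cosmetic: you track the number of $B$-syllables in alternating form and halve it at each level, whereas the paper inducts on word length in $(A\cup B)\setminus\{\id\}$ and shows strict decrease; both are the standard contraction computation.
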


\begin{proof}
 The wreath recursion $b=(\omega(b),\id,\dots,\id,\rho(b))$ ensures that $B$ is $A$-coarsely self-similar, since $\omega(b)\in A$ and $\rho(b)\in B$. It is also immediate from this wreath recursion that $(B,A)$ is orderly, namely it is of the form in Example~\ref{ex:orderly}. That $B$ is of type $\F_\infty$ follows from the fact it is finitely generated and abelian. It remains to show that $B$ is nuclear in $G_{\omega,\rho}$.

We first claim that $G_{\omega,\rho}$ is contracting with nucleus contained in $A\cup B$. We will induct on length of words in the generating set $(A\cup B)\setminus\{\id\}$. Let $g=a^{n_1}b_1 a^{n_2}b_2\cdots a^{n_k}b_k$ for some $k\ge1$, $n_1\in\{1,\dots,d\}$, $n_2,\dots,n_k\in\{1,\dots,d-1\}$, $\{b_1,\dots,b_{k-1}\}\subseteq B\setminus\{\id\}$ and $b_k\in B$. Note that $g$ has length either $2k-2$, $2k-1$ or $2k$ depending on whether $n_1=d$ and/or $b_k=\id$. As a base case, suppose $k=1$, so $g=a^{n_1}b_1$. Then $g=(1\cdots d)^{n_1}(\omega(b_1),\id,\dots,\id,\rho(b_1))$ and so the states on the first level are already in $A\cup B$. Now suppose $k\ge2$. We have
$$g=a^{n_1+n_2+\cdots+n_k}(a^{-(n_2+n_3+\cdots+n_k)} b_1 a^{n_2+n_3+\cdots+n_k}) \cdots (a^{-n_k}b_{k-1}a^{n_k})b_k\text{.}$$
Each $a^{-(n_{i+1}+\cdots+n_k)} b_i a^{n_{i+1}+\cdots+n_k}$ equals $(\omega(b_i),\id,\dots,\id,\rho(b_i))$ with the entries cyclically permuted, hence is of the form $(g_i^1,\dots,g_i^d)$ for $g_i^j\in A\cup B$. This means
$$g=(1\cdots d)^{n_1+n_2+\cdots n_k} (g_1^1,\dots,g_1^d)(g_2^1,\dots,g_2^d)\cdots(g_k^1,\dots,g_k^d)$$
with $g_i^j\in A\cup B$ for all $i,j$. In particular, any state on the first level is of the form $g_1^j g_2^j\cdots g_k^j$, and this is a word of length at most $k$. Moreover, if $b_k=\id$ then $g_k^j=\id$ so the length is at most $k-1$. Since $k\ge 2$ and $g$ has length at least $2k-2$ we see that in all cases $g_1^j g_2^j\cdots g_k^j$ is strictly shorter than $g$. The claim now follows by induction.

Now let $a^{n_1}b_1 a^{n_2}b_2\cdots a^{n_k}b_k$ be an arbitrary element of $G_{\omega,\rho}$, for $n_i\in\{1,\dots,d\}$ and $b_j\in B$. Since $G_{\omega,\rho}$ is contracting with nucleus contained in $A\cup B$, there exists an $m$ such that the states of $a^{n_1}b_1 a^{n_2}b_2\cdots a^{n_k}b_k$ on the $m$th level are in $A\cup B$. Suppose that the states on the $m$th level that are in $A\setminus B$ are in the coordinates $k_1,\dots,k_r$. Then $[1_1,a^{n_1}b_1 a^{n_2}b_2\cdots a^{n_k}b_k,1_1]=[T, \sigma \vv{b}, T]$ where $T$ is the tree obtained by taking the finite rooted $d$-ary tree whose leaves are all distance $m$ from the root and attaching a $d$-caret to the $k_i$th leaf for each $1\le i\le r$, and where $\sigma \vv{b} \in S_{d^m+r(d-1)}\wr B$. Since $\vv{b}\in B^{d^m+r(d-1)}$, this finishes the proof that $B$ is nuclear in $G$.
\end{proof}

\begin{corollary}
 For any \v Suni\'c group $G_{\omega,\rho}$, the R\"over--Nekrashevych group $V_d(G_{\omega,\rho})$ is of type $\F_\infty$.
\end{corollary}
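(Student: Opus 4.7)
The plan is to apply Theorem~\ref{thrm:good_H} directly with $G = G_{\omega,\rho}$, $H = B$, and $A = \langle a\rangle$ the cyclic group of order $d$ used in the construction of $G_{\omega,\rho}$. The group $G_{\omega,\rho}$ is self-similar by construction, as already observed in Example~\ref{ex:sunic}, so the only remaining task is to check that the hypotheses of Theorem~\ref{thrm:good_H} on $H$ and $A$ are satisfied.

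First I would verify that $A \subseteq V_d\cap\Aut(\tree_d)$: each element of $A$ has the form $[\vertwedge_1^1,\sigma\vv{\id},\vertwedge_1^1]$ for some $\sigma \in S_d$ (a power of the $d$-cycle $(1\cdots d)$), which is visibly an element of both $V_d$ and $\Aut(\tree_d)$, and $A$ is finite of order $d$. The remaining four conditions on $H = B$ and the pair $(B,A)$ — namely that $B$ is $A$-coarsely self-similar, that $(B,A)$ is orderly, that $B$ is nuclear in $G_{\omega,\rho}$, and that $B$ is of type $\F_\infty$ — are exactly the content of Lemma~\ref{lem:sunic_works}. Feeding these verifications into Theorem~\ref{thrm:good_H} then immediately yields that $V_d(G_{\omega,\rho})$ is of type $\F_\infty$.

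I do not anticipate any real obstacle at this step. The substantive work is already done elsewhere: the subtle ingredient is the nuclearity of $B$ in $G_{\omega,\rho}$, which Lemma~\ref{lem:sunic_works} establishes via a length-induction argument showing that $G_{\omega,\rho}$ is contracting with nucleus contained in $A \cup B$; the orderliness and $A$-coarse self-similarity of $B$ are immediate from the wreath recursion $b = (\omega(b),\id,\dots,\id,\rho(b))$ (which fits the template of Example~\ref{ex:orderly}), and $\F_\infty$-ness of $B$ follows because $B$ is finitely generated and abelian. The Morse-theoretic heavy lifting needed to pass from these hypotheses to the finiteness property of $V_d(G_{\omega,\rho})$ has already been carried out in Sections~\ref{sec:large_cpx} and~\ref{sec:stein}, so this corollary is simply the intended payoff of that machinery.
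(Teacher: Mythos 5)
Your proposal is correct and is exactly the paper's argument: the corollary is stated there as an immediate consequence of Lemma~\ref{lem:sunic_works} (which supplies the four hypotheses on $H=B$ and $A=\langle a\rangle$) together with Theorem~\ref{thrm:good_H}. Your extra check that $A\subseteq V_d\cap\Aut(\tree_d)$ is a harmless and correct addition.
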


\begin{proof}
 This is immediate from Lemma~\ref{lem:sunic_works} and Theorem~\ref{thrm:good_H}.
\end{proof}

Examples that arise in this setting include Grigorchuk's group (as addressed in \cite{belk16}) and the Fabrykowski-Gupta group. The \v Suni\'c groups are all examples of bounded automaton groups and as such are amenable \cite{bartholdi10}. In addition, many of these examples are regular branch groups, and thus are not elementary amenable \cite{juschenko17}. This class contains groups with a number of other exotic properties as well, including infinite torsion groups. The torsion examples, and the examples for $d=2$ that are not isomorphic to the infinite dihedral group, have intermediate word growth \cite{sunic07,francoeur16}.

We note that those \v Suni\'c groups $G$ that are regular branch are of type $\F_1$ but not type $\F_2$ \cite{bartholdi03}, so using $H=G$ would not have worked.

\subsection{Questions}

There is evidence that as soon as $G$ is finitely generated and contracting, $V_d(G)$ is of type $\F_\infty$. Nekrashevych proved that such $V_d(G)$ are at least finitely presentable \cite[Theorem~5.9]{nekrashevych18}, and according to the last paragraph in the introduction of \cite{belk16} Bartholdi and Geoghegan have some unpublished results proving that they are indeed type $\F_\infty$. It seems unlikely our methods could yield this broad of a result, with the biggest immediate roadblock being the stabilizers. In a nutshell, the mere fact that $G$ has a finite nucleus (which might not be a subgroup) doesn't give us any natural type $\F_\infty$ subgroups $H$ to use to build stabilizers of a Stein--Farley-esque complex, and it is unclear where to hunt for another nice complex on which $V_d(G)$ acts with nice (finite? trivial?) stabilizers.

Since this is technically still open as of the writing of the present paper we will record it here as a question (though Bartholdi and Geoghegan's aforementioned unpublished work would indicate the answer is ``yes''):

\begin{question}
 Is $V_d(G)$ of type $\F_\infty$ for every finitely generated, contracting self-similar $G$?
\end{question}

It would also be very interesting to find an example of a finitely generated self-similar (non-contracting, non-$\F_\infty$) group $G\le \Aut(\tree_d)$ such that $V_d(G)$ is not of type $\F_\infty$. We have seen examples where $G$ is not even finitely presentable but $V_d(G)$ is of type $\F_\infty$ (like the R\"over group), so in general it seems that any negative finiteness properties would have to come from somewhere other than ``carrying over'' from $G$. This question of whether R\"over--Nekrashevych groups are always $\F_\infty$ is especially interesting since they are often simple or virtually simple (see \cite[Theorems~9.11 and~9.14]{nekrashevych04} for a precise statement), and to the best of our knowledge it is still an open problem to find simple groups with arbitrary finiteness properties in the world of Thompson-like groups (see also the question in the introduction of \cite{witzel16}). Note that the $\F_1$-but-not-$\F_2$ case is handled for example by the basilica Thompson group $T_B$ \cite{belk15,witzel16}, but even the $\F_2$-but-not-$\F_3$ case is open.

\begin{question}\label{quest:not_F_infty}
 Does there exist a finitely generated self-similar $G$ such that $V_d(G)$ is not of type $\F_\infty$?
\end{question}

We remark that, after this paper was written, Question~\ref{quest:not_F_infty} was solved affirmatively by the authors and Stefan Witzel \cite{skipper19}.

\bibliographystyle{alpha}

\begin{thebibliography}{MPMN16}

\bibitem[Bar03]{bartholdi03}
Laurent Bartholdi.
\newblock Endomorphic presentations of branch groups.
\newblock {\em J. Algebra}, 268(2):419--443, 2003.

\bibitem[BB97]{bestvina97}
Mladen Bestvina and Noel Brady.
\newblock Morse theory and finiteness properties of groups.
\newblock {\em Invent. Math.}, 129(3):445--470, 1997.

\bibitem[BBCS08]{brady08}
Tom Brady, Jos\'{e} Burillo, Sean Cleary, and Melanie Stein.
\newblock Pure braid subgroups of braided {T}hompson's groups.
\newblock {\em Publ. Mat.}, 52(1):57--89, 2008.

\bibitem[BDJ17]{bleak17}
Collin Bleak, Casey Donoven, and Julius Jonu\v{s}as.
\newblock Some isomorphism results for {T}hompson-like groups {$V_n(G)$}.
\newblock {\em Israel J. Math.}, 222(1):1--19, 2017.

\bibitem[BF15a]{belkforrest15}
James Belk and Bradley Forrest.
\newblock Rearrangement {G}roups of {F}ractals.
\newblock {\em Trans. Amer. Math. Soc.}, 2015.
\newblock To appear. arXiv:1510.03133.

\bibitem[BF15b]{belk15}
James Belk and Bradley Forrest.
\newblock A {T}hompson group for the basilica.
\newblock {\em Groups Geom. Dyn.}, 9(4):975--1000, 2015.

\bibitem[BFM{\etalchar{+}}16]{bux16}
Kai-Uwe Bux, Martin~G. Fluch, Marco Marschler, Stefan Witzel, and Matthew C.~B.
  Zaremsky.
\newblock The braided {T}hompson's groups are of type {$\rm F_\infty$}.
\newblock {\em J. Reine Angew. Math.}, 718:59--101, 2016.
\newblock With an appendix by Zaremsky.

\bibitem[BK17]{bondarenko17}
Ievgen Bondarenko and Bohdan Kivva.
\newblock Automaton groups and complete square complexes.
\newblock arXiv:1707.00215, 2017.

\bibitem[BKN10]{bartholdi10}
Laurent Bartholdi, Vadim~A. Kaimanovich, and Volodymyr~V. Nekrashevych.
\newblock On amenability of automata groups.
\newblock {\em Duke Math. J.}, 154(3):575--598, 2010.

\bibitem[BM14]{belk14}
James Belk and Francesco Matucci.
\newblock Conjugacy and dynamics in {T}hompson's groups.
\newblock {\em Geom. Dedicata}, 169:239--261, 2014.

\bibitem[BM16]{belk16}
James Belk and Francesco Matucci.
\newblock R\"over's simple group is of type {$F_\infty$}.
\newblock {\em Publ. Mat.}, 60(2):501--524, 2016.

\bibitem[Bri07]{brin07}
Matthew~G. Brin.
\newblock The algebra of strand splitting. {I}. {A} braided version of
  {T}hompson's group {$V$}.
\newblock {\em J. Group Theory}, 10(6):757--788, 2007.

\bibitem[Bro87]{brown87}
Kenneth~S. Brown.
\newblock Finiteness properties of groups.
\newblock In {\em Proceedings of the {N}orthwestern conference on cohomology of
  groups ({E}vanston, {I}ll., 1985)}, volume~44 of {\em  J. Pure Appl. Algebra}, pages 45--75, 1987.

\bibitem[Bro92]{brown92}
Kenneth~S. Brown.
\newblock The geometry of finitely presented infinite simple groups.
\newblock In {\em Algorithms and classification in combinatorial group theory
  ({B}erkeley, {CA}, 1989)}, volume~23 of {\em Math. Sci. Res. Inst. Publ.},
  pages 121--136. Springer, New York, 1992.

\bibitem[BS07]{berlatto07}
Adilson Berlatto and Said Sidki.
\newblock Virtual endomorphisms of nilpotent groups.
\newblock {\em Groups Geom. Dyn.}, 1(1):21--46, 2007.

\bibitem[Deh06]{dehornoy06}
Patrick Dehornoy.
\newblock The group of parenthesized braids.
\newblock {\em Adv. Math.}, 205(2):354--409, 2006.

\bibitem[Far03]{farley03}
Daniel~S. Farley.
\newblock Finiteness and {$\rm CAT(0)$} properties of diagram groups.
\newblock {\em Topology}, 42(5):1065--1082, 2003.

\bibitem[FG18]{francoeur16}
Dominik Francoeur and Alejandra Garrido.
\newblock Maximal subgroups of groups of intermediate growth.
\newblock {\em Adv. Math.}, 340:1067--1107, 2018.

\bibitem[FH15]{farley15}
Daniel~S. Farley and Bruce Hughes.
\newblock Finiteness properties of some groups of local similarities.
\newblock {\em Proc. Edinb. Math. Soc. (2)}, 58(2):379--402, 2015.

\bibitem[FMWZ13]{fluch13}
Martin~G. Fluch, Marco Marschler, Stefan Witzel, and Matthew C.~B. Zaremsky.
\newblock The {B}rin-{T}hompson groups {$sV$} are of type {$\text{F}_\infty$}.
\newblock {\em Pacific J. Math.}, 266(2):283--295, 2013.

\bibitem[Hug09]{hughes09}
Bruce Hughes.
\newblock Local similarities and the {H}aagerup property.
\newblock {\em Groups Geom. Dyn.}, 3(2):299--315, 2009.
\newblock With an appendix by Daniel S. Farley.

\bibitem[Jus17]{juschenko17}
Kate Juschenko.
\newblock Non-elementary amenable subgroups of automata groups.
\newblock {\em J. Topol. Anal.}, pages 1--11, 2017.

\bibitem[LB17]{leboudec17}
Adrien Le~Boudec.
\newblock Compact presentability of tree almost automorphism groups.
\newblock {\em Ann. Inst. Fourier (Grenoble)}, 67(1):329--365, 2017.

\bibitem[MPMN16]{martinez-perez16}
Conchita Mart\'inez-P\'erez, Francesco Matucci, and Brita E.~A. Nucinkis.
\newblock Cohomological finiteness conditions and centralisers in
  generalisations of {T}hompson's group {$V$}.
\newblock {\em Forum Math.}, 28(5):909--921, 2016.

\bibitem[Nek04]{nekrashevych04}
Volodymyr~V. Nekrashevych.
\newblock Cuntz-{P}imsner algebras of group actions.
\newblock {\em J. Operator Theory}, 52(2):223--249, 2004.

\bibitem[Nek05]{nekrashevych05}
Volodymyr Nekrashevych.
\newblock {\em Self-similar groups}, volume 117 of {\em Mathematical Surveys
  and Monographs}.
\newblock American Mathematical Society, Providence, RI, 2005.

\bibitem[Nek18]{nekrashevych18}
Volodymyr Nekrashevych.
\newblock Finitely presented groups associated with expanding maps.
\newblock In {\em Geometric and cohomological group theory}, volume 444 of {\em
  London Math. Soc. Lecture Note Ser.}, pages 115--171. Cambridge Univ. Press,
  Cambridge, 2018.

\bibitem[NSJG18]{nucinkis18}
Brita E.~A. Nucinkis and Simon St. John-Green.
\newblock Quasi-automorphisms of the infinite rooted 2-edge-coloured binary
  tree.
\newblock {\em Groups Geom. Dyn.}, 12(2):529--570, 2018.

\bibitem[Qui78]{quillen78}
Daniel Quillen.
\newblock Homotopy properties of the poset of nontrivial {$p$}-subgroups of a
  group.
\newblock {\em Adv. in Math.}, 28(2):101--128, 1978.

\bibitem[R{\"o}v99]{roever99}
Claas~E. R{\"o}ver.
\newblock Constructing finitely presented simple groups that contain
  {G}rigorchuk groups.
\newblock {\em J. Algebra}, 220(1):284--313, 1999.

\bibitem[ST17]{sauer17}
Roman Sauer and Werner Thumann.
\newblock Topological models of finite type for tree almost automorphism
  groups.
\newblock {\em Int. Math. Res. Not. IMRN}, (23):7292--7320, 2017.

\bibitem[Ste92]{stein92}
Melanie Stein.
\newblock Groups of piecewise linear homeomorphisms.
\newblock {\em Trans. Amer. Math. Soc.}, 332(2):477--514, 1992.

\bibitem[{\v S}un07]{sunic07}
Zoran {\v S}uni\'c.
\newblock Hausdorff dimension in a family of self-similar groups.
\newblock {\em Geom. Dedicata}, 124:213--236, 2007.

\bibitem[SV11]{savchuk11}
Dmytro Savchuk and Yaroslav Vorobets.
\newblock Automata generating free products of groups of order 2.
\newblock {\em J. Algebra}, 336:53--66, 2011.

\bibitem[SVV11]{steinberg11}
Benjamin Steinberg, Mariya Vorobets, and Yaroslav Vorobets.
\newblock Automata over a binary alphabet generating free groups of even rank.
\newblock {\em Internat. J. Algebra Comput.}, 21(1-2):329--354, 2011.

\bibitem[SWZ19]{skipper19}
Rachel Skipper, Stefan Witzel, and Matthew C.~B. Zaremsky.
\newblock Simple groups separated by finiteness properties.
\newblock {\em Invent. Math.}, 215(2):713--740, 2019.

\bibitem[Tan16]{tanushevski16}
Slobodan Tanushevski.
\newblock A new class of generalized {T}hompson's groups and their normal
  subgroups.
\newblock {\em Comm. Algebra}, 44(10):4378--4410, 2016.

\bibitem[Thu17]{thumann17}
Werner Thumann.
\newblock Operad groups and their finiteness properties.
\newblock {\em Adv. Math.}, 307:417--487, 2017.

\bibitem[VV10]{vorobets10}
Mariya Vorobets and Yaroslav Vorobets.
\newblock On a series of finite automata defining free transformation groups.
\newblock {\em Groups Geom. Dyn.}, 4(2):377--405, 2010.

\bibitem[Wit]{witzel17}
Stefan Witzel.
\newblock Classifying spaces from {O}re categories with {G}arside families.
\newblock arXiv:1710.02992.

\bibitem[WZ16]{witzel16}
Stefan Witzel and Matthew C.~B. Zaremsky.
\newblock The {B}asilica {T}hompson group is not finitely presented.
\newblock To appear. arXiv:1603.01150, 2016.

\bibitem[WZ18]{witzel18}
Stefan Witzel and Matthew C.~B. Zaremsky.
\newblock Thompson groups for systems of groups, and their finiteness
  properties.
\newblock {\em Groups Geom. Dyn.}, 12(1):289--358, 2018.

\bibitem[Zar18]{zaremsky18}
Matthew C.~B. Zaremsky.
\newblock A user's guide to cloning systems.
\newblock {\em Topology Proc.}, 52:13--33, 2018.

\end{thebibliography}
\newcommand{\etalchar}[1]{$^{#1}$}

\end{document}